\newtheorem{Theorem}{Theorem}[section]
\newtheorem{TheoremA}{Theorem}
\newtheorem{Lemma}[Theorem]{Lemma}
\newtheorem{Proposition}[Theorem]{Proposition}
\newtheorem{Remark}[Theorem]{Remark}
\newtheorem{Claim}[Theorem]{Claim}
\newtheorem{Definition}[Theorem]{Definition}
\newtheorem{Fact}[Theorem]{Fact}
 \definecolor{darkgreen}{rgb}{0,0.4,0}
\definecolor{light}{gray}{.9}
\newcommand{\cA}{\ensuremath{\mathcal A}}
\newcommand{\cC}{\ensuremath{\mathcal C}}
\newcommand{\cE}{\ensuremath{\mathcal E}}
\newcommand{\cG}{\ensuremath{\mathcal G}}
\newcommand{\cI}{\ensuremath{\mathcal I}}
\newcommand{\cO}{\ensuremath{\mathcal O}}
\newcommand{\cP}{\ensuremath{\mathcal P}}
\newcommand{\cR}{\ensuremath{\mathcal R}}
\newcommand{\cT}{\ensuremath{\mathcal T}}
\newcommand{\cV}{\ensuremath{\mathcal V}}
\newcommand{\cZ}{\ensuremath{\mathcal Z}}
\newcommand{\bbC}{{\ensuremath{\mathbb C}} }
\newcommand{\bbE}{{\ensuremath{\mathbb E}} }
\newcommand{\bbN}{{\ensuremath{\mathbb N}} }
\newcommand{\bbP}{{\ensuremath{\mathbb P}} }
\newcommand{\bbQ}{{\ensuremath{\mathbb Q}} }
\newcommand{\bbR}{{\ensuremath{\mathbb R}} }
\newcommand{\bbZ}{{\ensuremath{\mathbb Z}} }
\let\a=\alpha    \let\d=\delta  \let\e=\varepsilon
\let\f=\varphi \let\g=\gamma     \let\k=\kappa  \let\l=\lambda
   \let\t=\tau   \let\th=\vartheta
\let\D=\Delta   \let\G=\Gamma  \let\L=\Lambda 
\let\O=\Omega      
\newcommand{\ds}{\displaystyle}
\author[A.\ Faggionato]{Alessandra Faggionato}
\address{Alessandra Faggionato.
% \hfill\break \indent
  Dipartimento di Matematica, Universit\`a di Roma `La Sapienza'
%  \hfill\break \indent
  P.le Aldo Moro 2, 00185 Roma, Italy}
\email{faggiona@mat.uniroma1.it}
\author[V. Silvestri]{Vittoria Silvestri}
\address{Vittoria Silvestri. 
% \hfill\break \indent
DAMTP, Centre for Mathematical Sciences,
Wilberforce Road,
Cambridge,
CB3 0WA,
United Kingdom.}
\email{V.Silvestri@maths.cam.ac.uk}
\title[Random walks on quasi one dimensional lattices]{Random walks on quasi one dimensional lattices: large deviations and fluctuation theorems}
\begin{document}

\begin{abstract}  Several stochastic processes 
modeling molecular motors   on a 
linear track are given by random walks (not necessarily Markovian)  on 
 quasi 1d lattices and  share a common regenerative structure.
 % and their mathematical investigation can be reduced to the study of a time changed sum of i.i.d. random vectors.
Analyzing  this abstract common structure, we derive information on the  large fluctuations  of the  stochastic process by proving  large deviation principles for the first--passage times and for the position. We focus our attention on the 
Gallavotti--Cohen--type symmetry of the position rate function  (fluctuation theorem), showing
its equivalence with the independence of suitable random variables. In the special case of Markov random walks, we show that this symmetry is universal only inside a suitable class of quasi 1d lattices. 
\medskip

\noindent {\em Keywords}: Markov chain, Random time change,  Large deviation principle, Gallavotti--Cohen--type symmetry, Fluctuation Theorem, 
Molecular motor.

\medskip

\noindent{\em AMS 2010 Subject Classification}: %60F10,  %Large deviations
60J27,  %Continuous-time Markov processes on discrete state spaces
%60F05, % Central limit and other weak theorems
60F10. %large deviations
%Secondary
82C05.  %Classical dynamic and nonequilibrium statistical mechanics (general)
\end{abstract}

\maketitle
%\setcounter{tocdepth}{1}
%\tableofcontents

%\club  Attenzione terminology: random walks a volte intendiamo quelli con tempi esponenziali, a volte processi stocastici sul grafo $\cG$.

\section{Introduction}
Molecular motors are proteins working as nanomachines \cite{H}:  they usually convert chemical energy coming from ATP hydrolysis   to produce mechanical work fundamental e.g. for cargo transport inside the cell, cell division, genetic transcription, muscle contraction.
Molecular motors   are object of intensive study in biology and biophysics. They are crucial in several fundamental biological processes and   are also relevant  from a theoretic viewpoint in  statistical physics, since  they are  small systems operating inside  an environment with large  thermal fluctuations (differently from  macroscopic motors) and in a out--of--equilibrium regime.  
 We concentrate here on the large family of  molecular motors working in a non--cooperative way and  moving along the cytoskeletal filaments, which are given by polarized homogeneous polymers.

The theoretical study of molecular motors has been developed  from two main 
  modelizations. In the so called \emph{Brownian ratchet} model \cite{JAP,PJA,Re} the dynamics of the molecular motor is given by a one--dimensional diffusion in a switching force field  (i.e.   the force field changes at random times). 
 The other paradigm, on which we concentrate here,   is given by continuous time random walks\footnote{By \emph{random walks} we mean  stochastic jump processes on a  given graph. When we restrict to  random walks given by  Markov chains \cite{N} (hence with exponential waiting times),   we call them \emph{Markov random walks}}  on quasi linear graphs having a periodic structure \cite{FK1,FK2,K2,KF1,KF2,KF3,TF}. We call these graphs \emph{quasi 1d lattices}, since they are obtained by gluing together several copies of a fundamental cell
in a linear fashion. The geometric  complexity of the fundamental cell (given by a finite, oriented and connected graph) corresponds to  the   conformational transformations of the molecular motor in its mechanochemical cycle. The simplest example is given by a random walk on $\bbZ$ with periodic jump rates \cite{D} (in this case the fundamental cell is given by an interval with $N$ sites, $N$ being the periodicity), while random walks on  other classes of   quasi 1d lattices (parallel--chain models and divided--chain models) have been studied motivated by experimental evidence of a richer structure \cite{DK1,DK2,K1}.

In a companion paper \cite{FS} we have studied in full generality  both the asymptotic velocity (law of large numbers) and the gaussian fluctuations (invariance principle) for random walks on quasi 1d lattices. We focus here on their large deviations. 
Large deviations and Gallavotti--Cohen--type symmetries (also called  \emph{fluctuation theorems}), which are given by  special identities satisfied by the rate function,  have received in the last decade much attention inside  non--equilibrium statistical physics of small systems and in particular for molecular motors
(cf. \cite{A,AG,FD,LM,S,SPWS} and references therein). 
 
 We treat random walks on quasi 1d lattices in full generality.  All relevant information concerning the position of the random walk is  encoded in an associated random walk on $\bbZ$ with nearest neighbor jumps and typically non--exponential holding times, that we call \emph{skeleton process}. 
 %Adapting to the latter the techniques developed in \cite{DGZ} 
 We derive for the latter the LDP for the first--passage times as well  as for the position  % whose rate function admits a different alternative representation compared  to \cite{DRL}
  (cf. Theorem \ref{baxtalo}). % Differently from \cite{DGZ} 
We  also obtain a  detailed qualitative analysis of the rate functions of the above LDPs (cf. Theorem \ref{I_study} and Proposition \ref{J_study}).   The tools developed in this part are fundamental to investigate the Gallavotti--Cohen  symmetry (shortly, GC symmetry) of the form $I (\th)= I(-\th)+c \th$, where $I$ is the LD rate function for the position of the skeleton process, 
$\th \in \bbR$ and $c$ is a suitable constant.
The  GC   symmetry   has been derived in \cite{LLM,LM} 
for Markov random walks on $\bbZ$ with periodic rates of period 2.  These random walks and their large deviations have been analyzed in \cite{LLM} by matrix  methods, allowing to study also an enriched process taking into account the ATP consumed  by the molecular motor. We restrict here to the molecular motor position (i.e. the skeleton process) and show that the GC symmetry pointed out in \cite{LLM} cannot hold for a generic Markov  random walk on a quasi 1d lattice. Indeed, we show that there exists a class of  quasi  1d lattices (called $(\underline v, \overline v)$--\emph{minimal}) such that  the GC  symmetry is verified for any choice of the rates, while outside that class the 
GC  symmetry is violated for Lebesgue any  choice of the rates.
  This result implies that a priori one cannot expect    to observe this symmetry  even if nanotechnology would allow the observations of large deviations. Moreover, it answers the basic question of how universal the GC symmetry discovered by \cite{LLM} in a simple model is. The relevance of both these issues (possible experimental evidence and universality) has been stressed
  also  in \cite{LM}.  In  \cite{FS1}   we will continue our analysis discussing more in detail the connection with the GC functional \cite{LS} and why the validity of the GC   symmetry for the above class of quasi 1d lattices is indeed a consequence of a universal symmetry for algebraic currents \cite{FD}. In \cite{FS1} we will also consider some examples.

We conclude this introduction with some comments on technical aspects.  
When considering Markov random walks the proof  of the position LD principle  is simpler, obtained by   the G\"artner--Ellis theorem  \cite{dH} and by  generalizing the matrix approach introduced by \cite{LLM} (cf. Theorem \ref{GE}).   On the other hand it gives no insight on the mechanism leading to the GC symmetry.
The  results, presented in Section \ref{quasi}, concerning the LD principles for first--passage times and for the position (Theorems \ref{baxtalo} and \ref{I_study}) hold  also for  non--Markov random walks on quasi 1d lattices and indeed for stochastic processes on quasi 1d lattices with a suitable regenerative structure (Theorems \ref{teo2} and \ref{acqua}). More precisely,  
they hold for   stochastic processes $(Z_t)_{t \in \bbR_+}$ obtained as follows.  
Consider a sequence $( w_i, \t_i )_{i \geq 1}$ of i.i.d.   2d vectors with values in
$\bbR \times (0,+\infty)$.   Defining $W_m:= \sum _{i=1}^m w_i$ and $
\cT_m:=\sum_{i=1}^m \t_i$ for $m \geq 0$ integer, set $Z_t:= W _{ \max \{ m\geq 0 :\, \cT_m \leq t \}}$. 
 Sums of i.i.d. random variables have many nice properties and random time changes are not troublesome for what concerns the  LLN and the invariance principles \cite{FS}.  On the other hand, 
 the derivation of the LDP for $(Z_t)_{t \in \bbR_+}$ from the large deviation properties of  $(W_m)_{ m\geq 0}$ and $( \cT_m)_{m \geq 0}$ is much more delicate.   In \cite{DRL} a LDP is  obtained under the condition that  the $\t_i$'s have finite logarithmic moment generating function.  
  This condition  
 is not satisfied when considering Markov random walks on quasi 1d lattices, hence in our case the results  of \cite{DRL}, an the similar ones of \cite{Ru}, cannot be applied. In the context of   LDPs for processes  under  random time changes  we also mention the new progresses  obtained in \cite{LMZ,MSZ}. 
 Restricting  to the case $w_i \in \{-1,1\}$ (which covers the applications to molecular motors), the process 
 $(Z_t)_{t \in \bbR_+}$  becomes a random walk on $\bbZ$ with generic  holding times (not necessarily exponential). Following the main scheme presented in \cite{DGZ} we derive the LDP for the process  $(Z_t)_{t \in \bbR_+}$. We point out some technical issues making our analysis different from \cite{DGZ}:  we allow correlations between $w_i$ and $\t_i$ (absent in \cite{DGZ}), moreover the minimum in the  support of the  law of $\t_i$ can be  zero or positive (the first case is excluded in \cite{DGZ}). Hence, although we have no random environment (thus of course simplifying the analysis) in our case there is a richer scenario for the possible behavior of the rate functions  of the process $(Z_t)_{t\in \bbR_+}$ and of the associated first--passage times, and this behavior has to be investigated and kept in consideration  in order to prove LDPs (see Section  \ref{ospedale}).

The  theorems concerning the GC symmetry are the most innovative ones from a mathematical viewpoint. Using the above LD analysis, in Theorems \ref{giova} and \ref{GC} we prove several characterizations of the  GC  symmetry for  $(Z_t)_{t\in \bbR_+}$, including the fact that it holds  if and only if $w_i$ and $\t_i$ are independent, thus clarifying the probabilistic mechanism leading to the GC symmetry. Using the above characterizations, we study the GC symmetry for Markov random walks (Theorem \ref{teo3}).
The validity of the GC symmetry  for Markov random walks on  $(\underline{v}, \overline{v})$--minimal 1d lattices  is derived by introducing a special path transformation  and comparing the original paths with the transformed ones. On the other hand, the proof of the almost everywhere breaking of the GC symmetry 
  outside the class  of $(\underline{v}, \overline{v})$--minimal quasi 1d lattices   is based on complex analysis methods.

 \section{Random walks on quasi 1d lattices} \label{quasi}

%We now focus on  random walks  on quasi 1D lattices, giving first an abstract definition covering several cases treated in the physical and biophysical literature (e.g. \cite{CMB,DK1,DK2,D,FK,K,KF1,KF2,TF}). 

We start by defining  quasi 1d lattices.
Consider first a finite oriented graph $G= (V,E)$, $V$ being the set of vertices and $E$ being the set of oriented edges,  $E\subset \{ (v,w)\,:\, v \not = w \text{ in } V \}$.  We fix  in $V$ two vertices $\underline{v}, \overline{v}$. 
%We write $E_s$ for the set of unoriented edges, i.e.\
%$$ E_s:=\left\{ \, \{v,w\}\,:\, (v,w) \in E \text{ or } (w,v) \in E \,\right\}\,.
%$$ 
 We assume that  the oriented graph $G$  is connected, i.e. for any $v,w \in V$ there is an oriented path in $G$ from $v $ to $w$. 
 % and assume that $(\underline{v}, \overline{v}) \not \in E$, $(\overline{v}, \underline{v}) \not \in E$.  \club 
 Then the quasi 1d lattice $\cG$ associated to the triple $\bigl(G,\underline{v},\overline{v}\bigr)$ is the oriented graph obtained by gluing together countable copies of $G$ such that   the point $\overline{v}$ of one copy is identified with the point $\underline{v}$ of the next copy. To give a formal definition,  we define $\cG$ as $\cG=(\cV, \cE)$ with vertex set $\cV$ and edge set $\cE$ as follows
 (see Figure \ref{pocoyo100}):
\begin{align*}
& \cV:=\left\{ v_n :=(v,n) \in (V \setminus \{\overline{v}\})\times \bbZ\right\} \,\\
& \cE :=\cE_1 \cup \cE_2 \cup \cE_3\,,
\end{align*}
where 
\begin{align*}
& \cE_1:= \left\{ (v_n, w_n)\,:\, (v,w) \in E \,,\; n \in \bbZ \right \} \,,\\
& \cE_2:= \cup _{n \in \bbZ} \left\{ (v_n, \underline{v}_{n+1} )\,:\, (v,\overline{v}) \in E \right\} \,,\\
&\cE_3:= \cup_{n \in \bbZ}  \left\{ (\underline{v}_{n+1},v_n )\,:\, (\overline{v},v) \in E \right\} \,.
\end{align*}

\begin{figure}[!ht]
    \begin{center}
     \centering
  \mbox{\hbox{
  \includegraphics[width=0.9\textwidth]{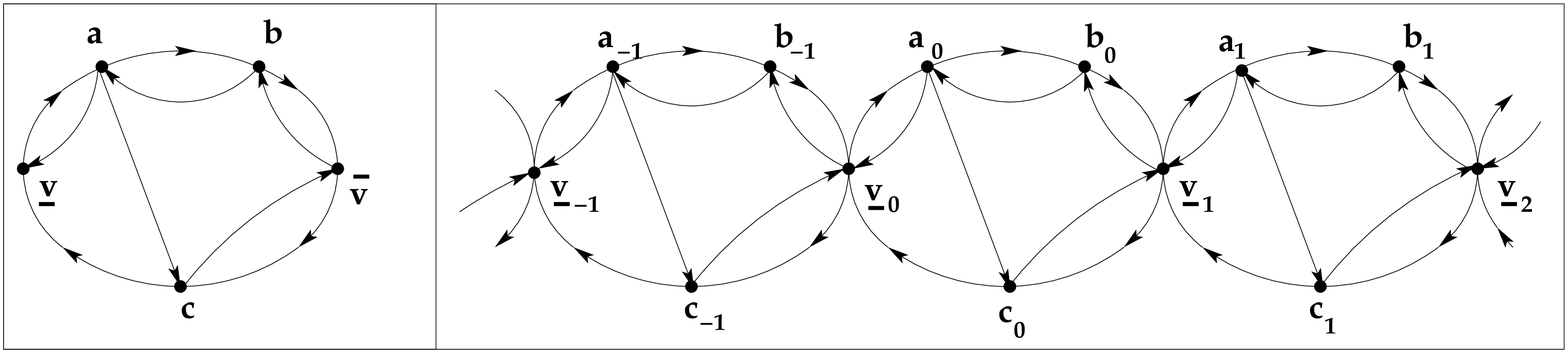}}}
            \end{center}
            \caption{The  graph $G=(V,E)$ with vertices $\underline{v}, \overline{v}$ (left) and the associated quasi 1d lattice $\cG= (\cV, \cE)$ (right) }\label{pocoyo100}
  \end{figure}

To simplify notation we set 
\begin{equation*}
n_*:= \underline{v}_n\,, \qquad n \in \bbZ\,.
\end{equation*}

On the graph $\cG$ we define the shift $\cT: \cV \to \cV$ as $\cT(v_n)= v_{n+1}$. Note that 
the graph $\cG$ is left invariant by the action of $\cT$.
We can now define the class of stochastic processes on quasi 1d lattices we are interested in:

\begin{Definition}\label{corpo1}
 Given a  quasi 1d lattice $\cG$ associated to the triple $\bigl(G,\underline{v},\overline{v}\bigr)$,  %\club \pecetta{in realta' potrebbe avere non exponential holding times}
  we consider a stochastic process   $( X_t)_{t \in \bbR_+}$  with paths in the Skohorod space $D( \bbR_+; \cV)$  starting at any site $n_*$ (we denote by $\bbP_{n_*}$ the associated  law on $D( \bbR_+; \cV)$) 
and fulfilling the following properties: %where $\bbP_{v_n}$ denotes the law of the 
\begin{itemize}
\item[(i)]  for each $ n \in \bbZ$, $\bbP_{n_*}$--a.s., 
  jumps are possible only  along the edges in $\cE$,  %\item[(ii)] % the process  looses  memory of its past when it visits sites of the form $\underline{v}_n$, 
\item[(ii)]  for each $ n \in \bbZ$, when   $(X_t)_{t \in \bbR_+}$ is sampled with law $\bbP_{n_*}$ then the 
law of $( \cT (X_t))_{t \in \bbR_+}$  equals $\bbP_{(n+1)_*}$,
\item[(iii)]  defining $S$ as  the random time 
\begin{equation}\label{pietro}S
  := \inf \left\{ t \geq 0 \,:\, X_t  \in \{-1_*,1_*\}    \right \}\,,
  \end{equation}
it holds $\bbE_{0_*}(S)< \infty$. 
\item[(iv)]  under  $\bbP_{0_*}( \cdot \,|\, X_S=\pm 1_*)$ the random path $(X_{S+t})_{ t \in \bbR_+}$ is independent from $(X_{t})_{ t \in [0,S]}$ and has law 
$\bbP_{\pm 1_*}$.
\end{itemize}
\end{Definition}

  \bigskip
 In the applications, typically 
    $ (X_t)_{t \in \bbR_+}$  is a Markov  random walk:
    
    \begin{Lemma}
    Let  $ (X_t)_{t \in \bbR_+}$  be a Markov  random walk   with state space $\cV$ and with positive  jump rates $r(x,y)$,  $(x,y) \in \cE$, such that 
 \begin{equation}\label{simm1}
r(x,y)= r( \cT x, \cT y)\,. 
 \end{equation}
 Then the above random walk is well defined for all times $t$ (no explosion takes place), fulfills the properties of Definition \ref{corpo1} and moreover $\bbE_{0_*}( e^{\l S}) <+\infty$ for  $\l >0$ small enough.
 \end{Lemma}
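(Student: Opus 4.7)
The plan is to break the lemma into three pieces. First, non-explosion together with the immediate properties (i), (ii), (iv) will follow from translation invariance and a Poisson bound on jumps. Indeed, by (\ref{simm1}) the total outgoing rate $\sum_{y:(x,y)\in\cE}r(x,y)$ depends only on the $\cT$-orbit of $x$, so by finiteness of $V$ it is uniformly bounded by some constant $R>0$; hence the number of jumps in $[0,t]$ is stochastically dominated by a Poisson variable of parameter $Rt$, giving non-explosion. Property (i) is built into the construction, property (ii) is a direct consequence of (\ref{simm1}), and property (iv) is the strong Markov property at the stopping time $S$.

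The main observation is that, starting from $0_*$, the walk before time $S$ is confined to the finite set
\[
V_*:=\{0_*\}\cup\{v_0\,:\,v\in V\setminus\{\underline v,\overline v\}\}\cup\{v_{-1}\,:\,v\in V\setminus\{\underline v,\overline v\}\}\,,
\]
of cardinality $2|V|-3$. Inspecting the three families $\cE_1,\cE_2,\cE_3$ shows that every edge starting in $V_*$ either stays in $V_*$ or arrives at one of the absorbing vertices $-1_*,1_*$: the only way to enter cell $+1$ (resp. cell $-2$) is through the boundary vertex $1_*$ (resp. $-1_*$). Hence up to time $S$ the process is a continuous-time Markov chain on the finite state space $V_*\cup\{-1_*,1_*\}$ with absorbing set $\{-1_*,1_*\}$.

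Next I would use the oriented connectedness of $G$ to produce, for each $x\in V_*$, a directed path in $\cG$ from $x$ to $\{-1_*,1_*\}$: a $G$-path from $v$ to $\overline v$ lifts to a cell-$0$ path from $v_0$ to $1_*$, and a $G$-path from $v$ to $\underline v$ lifts to a cell-$(-1)$ path from $v_{-1}$ to $-1_*$. Since all rates along $\cE$ are positive and $V_*$ is finite, this yields $T>0$ and $p\in(0,1)$ with $\bbP_x(S\le T)\ge p$ for every $x\in V_*$. Applying the strong Markov property at times $T,2T,\ldots$ gives the geometric tail $\bbP_{0_*}(S>kT)\le (1-p)^k$, from which $\bbE_{0_*}(e^{\l S})<\infty$ follows whenever $(1-p)e^{\l T}<1$, and in particular property (iii).

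I expect the delicate step to be the confinement claim: one has to go carefully through $\cE_1,\cE_2,\cE_3$ to rule out any edge leaving the two-cell window $\{n=0,-1\}$ except through the boundary vertices $\pm 1_*$. Once this geometric fact is in hand, the rest is a routine exit-time estimate for a finite-state continuous-time Markov chain with absorbing set.
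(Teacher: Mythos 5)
Your proposal is correct and follows exactly the route the paper has in mind: the paper omits the proof but states that the finite exponential moment follows from the exponential decay of hitting probabilities for irreducible Markov chains with finite state space, which is precisely your confinement-plus-geometric-tail argument (and your confinement set $V_*$ and the check that the only exits from the two-cell window are through $\pm 1_*$ are both right). Nothing further is needed.
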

 The proof of the above lemma is simple and therefore omitted. 
 The finite exponential moments  for $\l$ small follow from the exponential decay of hitting probabilities for irreducible Markov chains with finite state space.
 
 \smallskip

  We point out  that in the applications another relevant example is given by a random walk   $( X_t)_{t \in \bbR_+} $  on the graph $\cG$ with non exponential holding times (cf. \cite{KF1}).

\smallskip

 %In what follows, we write $\bbP_{v_n}$ for the law on the Skohorod space $D( \bbR_+, \cV)$ of the random walk starting at $v_n \in \cV$.
Note that the states $n_*$'s behave as gates which have to be crossed by the stochastic process $X_t$ in order to move from one fundamental cell to the neighboring  ones in the quasi 1d lattice $\cG$. In the applications to molecular motors, %the graph $\cG=(\cV,\cE)$ with the associated jump rates encodes the mechanochemical cycle of the molecular motors. Each 
each site  $n_*$ corresponds to a spot in the $n$--monomer of the polymeric filament where the molecular motor can bind. The other states $v_n$ correspond to intermediate conformational states that  the molecular motor
achieves in its mechanochemical transformations, which are described by jumps along edges in $\cE$.  In particular,  states  $v_n$  do not  encode only a spatial position and jumps do not necessarily correspond to spatial jumps.

\bigskip  %From now on we consider the random walk $X$ starting at $ \underline{v}_0$.
We   now introduce the fundamental object of our investigation:

\begin{Definition}\label{corpo2} Given the stochastic process $X$ as in Definition \ref{corpo1},  the \emph{skeleton process} 
 $X^*= (X_t^*)_{t \in \bbR_+}$
 is defined as  $ X_t^*:= \Phi(X_\iota)$ where $\Phi(n_*)=n$ and 
 $$\iota := \sup \left\{ s \in [0,t]: X_t=n_* \text{ for some } n \in \bbZ\right\}\,.$$
 $X^*_t$    has values in   $\bbZ$ and  records the last visited 
state of the form  $n_*$ up to time $t$.
\end{Definition}

In the applications to molecular motors,  the process $(X^*_t)_{t \in \bbR_+} $ contains  all the relevant information, indeed it allows to determine the position of the molecular motor up to an error of the same order of the monomer size. %Our main results concern the asymptotic behavior of  the skeleton process (law of large numbers, invariance principe and large deviations principle) as well as the analysis of a Gallavotti--Cohen type symmetry.

 % We first explain how this process enters in the abstract class introduced in the previous section. 
%To this aim,
 % let $S$ be the random time $$S
 % := \inf \left\{ t \geq 0 \,:\, X_t  \in \{-1_*,1_*\}    \right \}%
%$$
%(recall that, if not  mentioned differently,  the random walk $X_t$  starts at $0_*$).
%Note that $X^*_S = X_S$ by definition.
%Consider  the random  vector $(X_S,S)\in \{-1,1\} \times (0,+\infty)$ where $\{ X_t\}_{t \in \bbR_+} $ is sampled according to $\bbP_{0_*}$ 
%As a  consequence of Theorem \ref{teo1}, Theorem \ref{teo2} and the above lemma we get

\section{Main results for random walks  on quasi 1d lattices}\label{panettone}
 Let $S$ be the random time defined in \eqref{pietro}.  As proved in \cite{FS}, one can easily obtained a strong   law of large numbers for the skeleton process since $\bbE_{0_*}(S) < +\infty$ (cf. Theorems 1 and 2 in \cite{FS}):
 
\begin{equation}
\lim _{t \to \infty} \frac{X^*_t}{t}=
 \frac{  \bbP_{0_*}  (X_S=1_*)-\bbP _{0_*} (X_S=-1_*)}{\bbE_{0_*}(S)}=:v\,, \qquad \qquad \text{$\bbP_{0_*}$-a.s.}
 \,.\label{trenino}
\end{equation}
In \cite{FS} we study also the gaussian fluctuations of the skeleton process, proving  an invariance principle  if $\bbE_{0_*}(S^2) < +\infty$. We concentrate here on large deviations. 

\subsection{Large deviations}

From now on, in addition to the requirements in Definition  \ref{corpo1},  we assume that 
\begin{equation}
 \bbP_{0_*}  (X_S=1_*)>0 \text{ and }\bbP _{0_*} (X_S=-1_*)>0\,,
 \end{equation}
which holds for  molecular motors.

\begin{TheoremA}\label{baxtalo}
Consider the process  $( X_t )_{t \in \bbR_+}$ starting at $0_*$.    
 Call   $T_n$ the first time the skeleton process hits $n\in \bbZ$, i.e.
 \begin{equation}\label{chejo}
 T_n := \inf\left\{ t \in \bbR_+\,:\, X_t^* =n \right\} \in [0,+ \infty]\,.
 \end{equation}
 Then  the following holds:
 
 \begin{itemize}
 \item[(i)]
 As $n \to \pm \infty$ the random  variables  $T_n\,/\,|n|$ 
satisfy a LDP with speed $|n|$ and convex rate function
 \begin{equation}\label{pocoyo1}
 J_\pm (u):= \sup_{ \lambda \in \bbR} \left\{ \lambda u - \log \varphi_\pm (\lambda) \right\}\,, \qquad u \in \bbR \,, \end{equation}
where  
\begin{equation} \label{pocoyo2}
 \varphi_\pm (\lambda ):= \bbE_{0_*}\left( e^{\lambda T_{\pm 1}} \mathds{1}(T_{\pm 1} <\infty )\right) \in (0,+\infty]\,, \qquad \l \in \bbR\,.
\end{equation}
The rate function $J_\pm$ is  good\footnote{We use the same terminology of \cite{DZ} } if and only if $\bbP_{0_*}(T_1 < \infty)\not = \bbP_{0_*}(T_{-1}< \infty)$.

\item[(ii)]
As $t \to +\infty$,  the random variables   $X^*_t /t$ satisfy a LDP with speed $t$ and good  and convex rate function $I$
given by 
\begin{equation}% \label{rate_f100}
I(\th)= 
\begin{cases}
\th J_+ (1 / \th) & \text{ if }\qquad  \th >0\,,\\
|\th |J_-(1 / |\th |) & \text{ if }\qquad  \th  <0\,,\\
\end{cases}
\end{equation}
and $I(0) = \lim _{\th \to 0} I(\th )$.
\end{itemize}
\end{TheoremA}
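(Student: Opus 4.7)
The plan is to exploit the regenerative structure of $X$ (properties (ii) and (iv) of Definition \ref{corpo1}) in order to reduce both parts to Cram\'er-type large deviations for i.i.d.\ sums. For part (i), fix $n\geq 1$. Since the skeleton $X^*$ moves by nearest-neighbor steps on $\bbZ$, reaching $n$ from $0$ forces successive hits of $1,2,\dots,n$; an induction on $n$ using property (iv) at each gate visit, combined with the shift invariance (ii), yields
\begin{equation*}
\bbE_{0_*}\bigl[e^{\lambda T_n}\mathds{1}(T_n<\infty)\bigr]=\varphi_+(\lambda)^n, \qquad n\geq 1.
\end{equation*}
In particular $\bbP_{0_*}(T_n<\infty)=p_+^n$ with $p_+:=\bbP_{0_*}(T_1<\infty)$, and conditionally on $\{T_n<\infty\}$ the variable $T_n$ is an i.i.d.\ sum of $n$ copies of $\tilde\xi$, the law of $T_1$ conditioned on $\{T_1<\infty\}$. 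The factorization $\bbP_{0_*}(T_n/n\in A)=p_+^n\,\bbP(\tilde S_n/n\in A)$ for Borel $A\subset\bbR$, where $\tilde S_n=\sum_{i=1}^n\tilde\xi_i$, reduces the statement to Cram\'er's theorem applied to the positive random variable $\tilde\xi$. Its log-mgf equals $\log\varphi_+(\lambda)-\log p_+$, so the Cram\'er rate function $\tilde J_+$ coincides with $J_++\log p_+$ for $J_+$ as in \eqref{pocoyo1}; taking logarithms of the factorization transports the LDP for $\tilde S_n/n$ into the announced LDP for $T_n/n$ with rate $J_+$. The case of $T_{-n}/n$ is symmetric.

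The goodness characterization reduces, after the above shift, to the classical fact that the Cram\'er rate of a positive random variable is good iff its mgf is finite on a right-neighborhood of the origin, equivalently $\varphi_+(\lambda)<\infty$ for some $\lambda>0$. Conditioning on the first skeleton step gives a quadratic equation
\begin{equation*}
p_- g_-(\lambda)\,\varphi_+(\lambda)^2 - \varphi_+(\lambda) + p_+ g_+(\lambda) = 0,
\end{equation*}
with $g_\pm(\lambda):=\bbE_{0_*}[e^{\lambda S}\mid X_S=\pm 1_*]$ and $p_\pm:=\bbP_{0_*}(X_S=\pm 1_*)$; the discriminant at $\lambda=0$ equals $(p_+-p_-)^2$, so $\varphi_+$ admits an extension to some $\lambda>0$ precisely when $p_+\neq p_-$, i.e.\ $\bbP_{0_*}(T_1<\infty)\neq \bbP_{0_*}(T_{-1}<\infty)$.

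For part (ii), I would combine part (i) with the renewal-theoretic duality between $X^*_t$ and $(T_n)$. For $\theta>0$, the inclusion $\{X^*_t\geq\theta t\}\subseteq\{T_{\lceil\theta t\rceil}\leq t\}$ together with part (i) gives
\begin{equation*}
\limsup_{t\to\infty}\tfrac{1}{t}\log\bbP_{0_*}(X^*_t\geq \theta t)\leq-\theta J_+(1/\theta)=-I(\theta).
\end{equation*}
The matching lower bound uses the event $\{T_{\lceil\theta t\rceil}\leq(1-\varepsilon)t\}$ intersected with the event that the skeleton does not backtrack below $\lceil\theta t\rceil$ during the remaining time $\varepsilon t$; by the regenerative structure the two factors decouple, the first contributing $\exp(-\theta tJ_+(1/\theta)(1+o(1)))$ by part (i), while the second is bounded below by a positive constant via the LLN \eqref{trenino} applied to the skeleton restarted at $\lceil\theta t\rceil$. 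The symmetric argument with $J_-$ handles $\theta<0$, and $\theta=0$ is obtained by passing to the limit. Convexity of $I$ follows since, for $u$ fixed, the perspective $\theta\mapsto \theta f(u/\theta)$ of a convex $f$ is convex on $\theta>0$. Goodness is immediate: $J_\pm(0)=-\log\inf_\lambda\varphi_\pm(\lambda)=+\infty$ because $\varphi_\pm(\lambda)\to 0$ as $\lambda\to-\infty$ by dominated convergence (using $T_{\pm 1}>0$ a.s.), hence $J_\pm(1/|\theta|)\to\infty$ as $|\theta|\to\infty$ and so does $I(\theta)$, which with convexity and lower semicontinuity yields compact sub-level sets.

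The main technical obstacle is the lower bound in part (ii): since $X^*_t$ is not monotone, one cannot directly invert the LDP for $T_n/n$, as the skeleton may return to $\lceil\theta t\rceil-1$ after hitting $\lceil\theta t\rceil$. The ``no backtracking'' event must be quantified with enough exponential precision not to spoil the rate. This is precisely the point at which the general framework of \cite{DRL} fails to apply in our setting, since the $\tau_i$'s need not have a finite logarithmic mgf; one must instead carry out the regenerative scheme of \cite{DGZ}, adapted to allow correlations between $w_i$ and $\tau_i$, as indicated in the introduction.
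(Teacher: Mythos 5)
Your part (i) is essentially the paper's own argument (Section \ref{mec_yek1}): factor out $\bbP_{0_*}(T_n<\infty)=\bbP_{0_*}(T_1<\infty)^n$ and apply Cram\'er to the conditioned i.i.d.\ summands; the goodness criterion via the quadratic equation \eqref{base} and the discriminant $1-4f_+(\l)f_-(\l)$ is also how the paper proceeds (note only that your two uses of $p_\pm$ denote different quantities, $\bbP_{0_*}(T_{\pm1}<\infty)=\varphi_\pm(0)$ versus $\bbP_{0_*}(X_S=\pm1_*)=f_\pm(0)$; the equivalence $f_+(0)\neq f_-(0)\Leftrightarrow \varphi_+(0)\neq\varphi_-(0)$ is true but needs the explicit formula \eqref{cena}).

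Part (ii) has a genuine gap in the slowdown regime, i.e.\ for $\th$ strictly between $0$ and $v$ and for $\th$ on the opposite side of $0$ from $v$. For the upper bound, the inclusion $\{X^*_t\geq\th t\}\subseteq\{T_{\lceil\th t\rceil}\leq t\}$ only yields $-\inf_{u\leq 1/\th}J_+(u)$, and since $J_+$ is decreasing only up to its critical point $\th_c^+=1/v$ and then increases, this infimum equals $J_+(1/\th)$ only when $\th\geq v$; for $0<\th<v$ it equals $J_+(\th_c^+)=0$ and the bound is vacuous, whereas $I(\th)>0$. This is exactly why the paper must prove Proposition \ref{cuore} (the exponential cost of ever returning to the origin) and run the localized recursion \eqref{siberia3}--\eqref{trucco} borrowed from \cite{DGZ} — so the hard half is the upper bound, not the lower bound as you state. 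Your lower bound is also not sound as written: on $\{T_{\lceil\th t\rceil}\leq(1-\e)t\}\cap\{\text{no backtracking}\}$ the walk may overshoot far beyond $\th t$ by time $t$ (typically it will, travelling at speed $v$ over the remaining time, which is of order $t$ when $\th<v$), so this event is not contained in $\{X^*_t/t\in(\th-\e',\th+\e')\}$ and the ``positive constant'' factor does not produce the two-sided window required by the LDP lower bound. One needs a two-sided pinning of the hitting time, $T_{\lfloor\th t\rfloor}\in((1-\d)t,(1+\d)t)$, together with a control on the \emph{number of jumps} performed in the window $[t-\d t,t+\d t]$ (the paper's events $A_t,B_t$ and Lemma \ref{pesca}); showing that the latter event fails with probability negligible compared to $\bbP(A_t)$ is nontrivial precisely because the holding times may be arbitrarily small, and this estimate cannot be replaced by an appeal to the LLN. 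Finally, for $I$ to be well defined and lower semicontinuous you must check that the left and right limits of $I$ at $0$ agree (both equal $\l_c$, Lemma \ref{biancaneve}), which your sketch does not address.
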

Theorem \ref{baxtalo} is an immediate consequence of Lemma \ref{torlonia} and Theorem \ref{teo2}
 in Section \ref{viaggio_astratto}.
Since $T_n$ can take value $+\infty$, the meaning of the LDP for $T_{n}/|n|$ as $n \to \pm \infty$ is the following: for each close subset $\cC \subset \bbR$ and each open subset $\cO \subset \bbR$ it holds \begin{align*}
\limsup _{n \to \pm \infty } \frac{1}{|n| } \log \bbP_{0_*} \left( \frac{T_n}{|n|} \in \cC\right)& \leq - \inf_{\cC } J_\pm\,,    \\
 \liminf _{n \to \pm \infty}  \frac{1}{|n| } \log \bbP_{0_*}\left ( \frac{T_n}{|n|} \in \cO\right )& \geq - \inf_{\cO} J_\pm\,.
\end{align*}
  %We point out that Assumption \eqref{sar_san?}  is the only relevant case in the applications. The degenerate cases $ \bbP_{0_*} ( X_S= 1_* )=1$,  $  \bbP _{0_*}( X_S= -1_* )=1$ are rather trivial and can be treated easily.

%\medskip
We now collect information on the qualitative behavior of the rate function $I$.  The qualitative behavior of the rate functions $J_-,J_+$ is described in Proposition \ref{J_study} in Section \ref{montagna}. Here we concentrate on the rate function $I$ since the large deviations of $X_t^*/t$ are more relevant in the applications.

\begin{Definition} We define 
$\a_\pm$ as the minimum of the support of the law of $T_{\pm 1}$.
\end{Definition}
We point out that   $\a_\pm$ is the minimum of  the support of the Borel measure $A\to \bbP_{0_*}( S \in A \,,\; X_S= \pm 1_*)$ (see  Prop. \ref{sirius_black} in Section \ref{viaggio_astratto}).
 Below $1/\a_\pm$ is intended to be $+\infty$ if $\a_\pm = 0$. Note that $\a_\pm=0$ in the case of Markov  random walks.

\begin{TheoremA}\label{I_study} 
   The  following holds:
 \begin{itemize}
\item[(i)] $I$ is infinite outside $\big[-\frac{1}{\a_-} , \frac{1}{\a_+} \big]$, $I$ is   finite and $C^1$ on $\big( -\frac{1}{\a_-} , \frac{1}{\a_+} \big)$, moreover it is 
 smooth on $(-1/\a_-,1/\a_+)\setminus\{0\}$.

\item[(ii)] The following  holds:
%\begin{itemize}
%\item[$\bullet$] If $\bbP(T_1 =\a_+) > 0$ and  $\bbP (T_{-1} = \a_-) > 0$ then 
	\begin{align}
	 \lim_{ \th \nearrow \frac{1}{\a_+} } I(\th ) & = 
	 \begin{cases}
	 +\infty  \;\;\;\;&\mbox{if }\;\bbP_{0_*}(T_1 =\a_+) = 0 \\
	 I \big( \frac{1}{\a_+} \big) < \infty\;\; \;\;&\mbox{otherwise.}
	 \end{cases} \label{destra}\\
	\lim_{ \th \searrow -\frac{1}{\a_-} } I(\th )  & = 
	 \begin{cases}
	 +\infty  &\mbox{if }\; \bbP_{0_*}(T_{-1} =\a_-) = 0 \\
	 I \big( -\frac{1}{\a_-} \big) < \infty &\mbox{otherwise.} \label{sinistra}
	 \end{cases}
	\end{align}
%and $I(\th ) = +\infty$ for $\th <-\frac{1}{\a_-}$ or  $ \th > \frac{1}{\a_+} $.
%In particular $I$ has compact level sets, so it is a good rate function.
\item[(iii)] The derivative of $I$ satisfies $\ds \lim_{\th \nearrow \frac{1}{\a_+} } I'(\th ) = +\infty $ and $\ds \lim_{\th \searrow -\frac{1}{\a_-} } I'(\th ) = -\infty $. 
\item[(iv)] $I$ is lower semicontinuous and  convex on $\bbR$, it is strictly convex on $\big( -\frac{1}{\a_-} , \frac{1}{\a_+} \big)$.
\item[(v)] $I$ 
 has a unique global minimum, which is  given by $0$ and is attained at  $v\in (-1/\a_-, 1/\a_+)$, where $v$ is the asymptotic velocity defined in \eqref{trenino}. Moreover $I$ is strictly decreasing on $(-1/\a_-, v)$ and is strictly increasing on $(v, 1/\a_+)$.
\end{itemize}
\end{TheoremA}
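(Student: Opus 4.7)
The plan is to derive Theorem \ref{I_study} from the corresponding qualitative analysis of the first--passage rate functions $J_\pm$ (Proposition \ref{J_study}), transferring those properties through the defining identity of Theorem \ref{baxtalo}(ii),
\[
I(\theta)=\theta J_+(1/\theta)\;\;(\theta>0),\qquad I(\theta)=|\theta|J_-(1/|\theta|)\;\;(\theta<0).
\]
Setting $u=1/|\theta|$, differentiation on each half-line yields the two workhorses
\[
I'(\theta)=J_\pm(u)-uJ_\pm'(u)=-\Lambda_\pm\bigl(J_\pm'(u)\bigr),\qquad I''(\theta)=\frac{J_\pm''(u)}{|\theta|^3},
\]
where the second form of $I'$ uses Legendre duality with $\Lambda_\pm:=\log\varphi_\pm$.

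With these in hand, most items follow by direct transfer from $J_\pm$. The effective domain and the smoothness on $(-1/\alpha_-,1/\alpha_+)\setminus\{0\}$ in (i) reflect those of $J_\pm$ on $(\alpha_\pm,+\infty)$. The boundary values in (ii) reduce to $J_\pm(\alpha_\pm)=-\log\bbP_{0_*}(T_{\pm 1}=\alpha_\pm)$ (finite iff the corresponding atom is present), an identification supplied by Proposition \ref{J_study}. Part (iii) follows from $J_\pm'(u)\to -\infty$ as $u\searrow\alpha_\pm$, which makes $-uJ_\pm'(u)\to+\infty$. In (iv), strict convexity on each half follows from $J_\pm''>0$ via the second-derivative formula, while convexity and lower semicontinuity of $I$ on all of $\bbR$ are inherited from its being an LDP rate function obtained by Legendre duality. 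Finally, (v) is obtained by combining the LLN \eqref{trenino} with Theorem \ref{baxtalo}(ii) to force $I(v)=0$; strict convexity from (iv) then yields uniqueness of the zero and strict monotonicity of $I$ on either side of $v$.

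The main obstacle is the $C^1$ regularity of $I$ at $\theta=0$ in (i), where the two representations meet. Continuity at $0$ is free from convexity on a neighborhood of $0$. For $C^1$, one uses $I'(\theta)=-\Lambda_\pm\bigl(J_\pm'(u)\bigr)$ as $u\to+\infty$: by Legendre duality $J_\pm'(u)\to\lambda_\pm^\ast:=\sup\{\lambda:\Lambda_\pm(\lambda)<\infty\}$, so the one-sided limits of $I'$ at $0$ reduce to $-\Lambda_\pm(\lambda_\pm^\ast)$, and matching them is the delicate step. The cleanest way to force the matching is to reinterpret $I$ as the Legendre transform of the scaled log--moment generating function $F(\lambda):=\lim_{t\to\infty}\tfrac{1}{t}\log\bbE_{0_*}\bigl[e^{\lambda X_t^\ast}\bigr]$, available through the Gärtner--Ellis approach alluded to in the introduction: smoothness and strict convexity of $F$ on the interior of its effective domain then propagate to $C^1$ regularity of $I=F^{\ast}$ on the image of $F'$, which is precisely the required open interval, and the two one-sided derivatives at $0$ are automatically equated. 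This smoothness argument is where I expect the main technical work to lie.
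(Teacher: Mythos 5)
Your overall strategy — transferring the qualitative properties of $J_\pm$ through $I(\th)=\th J_+(1/\th)$, $I(\th)=|\th|J_-(1/|\th|)$, with the derivative formula $I'(\th)=-\log\varphi_\pm\bigl(J_\pm'(u)\bigr)$, $u=1/|\th|$ — is exactly the paper's, and items (ii), (iii) and the strict convexity away from $0$ go through as you describe. But the step you yourself flag as the main technical work, the $C^1$ matching at $\th=0$, is not actually carried out, and the route you propose for it does not work in the required generality. Theorem \ref{I_study} holds for arbitrary processes as in Definition \ref{corpo1} (in particular non--Markov random walks), whereas the G\"artner--Ellis representation $I=F^*$ with $F(\l)=\lim_t \tfrac1t\log\bbE_{0_*}(e^{\l X_t^*})$ is only established in the paper for Markov walks (Theorem \ref{GE}, via Perron--Frobenius); for a general renewal-type skeleton the existence, finiteness and \emph{strict convexity/differentiability} of $F$ is precisely what would have to be proved, so the detour replaces the difficulty rather than resolving it. Moreover, invoking the LDP for $X^*_t/t$ at this stage is circular in the paper's architecture: the proof of that LDP (Section \ref{mec_yek3}) \emph{uses} the qualitative properties of $I$ being established here. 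The paper's actual resolution is elementary and algebraic: from the renewal identity $\varphi_+=f_++f_-\varphi_+^2$ one gets $\varphi_+(\l_c)=1/(2f_-(\l_c))$ and $\varphi_-(\l_c)=1/(2f_+(\l_c))$, hence $\varphi_+(\l_c)\varphi_-(\l_c)=1/(4f_+(\l_c)f_-(\l_c))=1$ by the defining property of $\l_c$; since $\lim_{\th\searrow0}I_+'(\th)=-\log\varphi_+(\l_c)$ and $\lim_{\th\nearrow0}I_-'(\th)=\log\varphi_-(\l_c)$, the two one-sided derivatives coincide. This is the missing ingredient.

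A related gap: you assert that continuity of $I$ at $0$ "is free from convexity on a neighborhood of $0$". It is not — $I_+$ and $I_-$ are defined by separate formulas on the two half-lines, and convexity of each branch alone does not force their limits at $0$ to agree (so the very definition $I(0)=\lim_{\th\to0}I(\th)$ needs justification). The paper proves both one-sided limits equal $\l_c$, again using that the maximizers $\l_\pm(\th)$ converge to $\l_c$ and that $\log\varphi_\pm(\l_c)$ is finite. Finally, for item (v) you should derive $I(v)=0$ analytically from Proposition \ref{J_study}--(iv) (namely $\th_c^+=1/v$ and $J_+(\th_c^+)=0$ when $v>0$, and the analogous statements for $v\le0$), rather than from the LLN plus the LDP, again to avoid the circularity noted above.
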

 Theorem \ref{I_study} is an immediate consequence of Lemma \ref{torlonia} and Theorem \ref{acqua} in Section \ref{viaggio_astratto}.

\medskip
When the process $X$ is a Markov random walk with periodic rates (i.e. satisfying \eqref{simm1}), the derivation of the large deviation principle is simpler. In this case given $x \in \cV$  we set 
$r(x):= \sum_{y: (x,y) \in \cE} r(x,y)$ and, 
given $v \not = w$ in $ V \setminus \{ \overline{v}\}$, we set (using the convention that $r(y,z)=0$ if $(y,z) \not \in \cE$)
\begin{equation*}%\label{venticello}
 r(v):= r(v_n) \,, \;\;  r_-(w,v):= r(w_{n-1},v_n)\,,\;\;
 r_0(w,v):=r( w,v)\,, \; \; r_+(w,v):= r(w_{n+1},v_n)\,. \end{equation*}
  The above  definition is well posed due to \eqref{simm1}.
Finally, given $\l \in \bbR$ we 
consider the finite  matrix $\cA(\l)$, with entries parameterized by $( V\setminus\{\overline{v}\} ) \times (V\setminus\{\overline{v}\} )$, defined as :
\begin{equation}\label{Freitag} \cA _{v,w}(\l) := 
\begin{cases} -r(v) & \text{ if } v=w \,,\\
e^\l r_-(w,v)  +r_0(w,v)+e^{-\l} r_+(w,v) & \text{ if } v \not = w\,.
\end{cases}
\end{equation}
Applying G\"aertner--Ellis Theorem we will derive the following result:
\begin{TheoremA}\label{GE}
%Suppose that the process $X$ is a  random walk with exponential holding times. 
Suppose $X$ is a Markov random walk on the quasi 1d lattice $\cG= (\cV, \cE)$ with transition rates satisfying \eqref{simm1}. 
Then, as $t \to +\infty$, the random variables  $ X_t^*/t$ satisfy  a large deviation principle with speed $t$  and  convex and good  rate function  $I(\th )$ given by 
$$ I(\th)=\sup_{ \l \in \bbR} \{ \th \l - \L(\l) \} \,, \qquad  \th \in \bbR $$
where $\L(\l)$ is the  finite value
$$ \L(\l):= 
\max \{ \Re (\g) \,:\, \g \text{ eigenvalue of } \cA (\l) \}$$
and the matrix $\cA(\l)$ is defined \eqref{Freitag}.
\end{TheoremA}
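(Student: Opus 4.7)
The plan is to apply the G\"artner--Ellis theorem, identifying the scaled cumulant generating function as the maximal real-part eigenvalue of $\cA(\l)$ via a matrix-exponential formula. A preliminary convenience: introduce the cell label $n(v_k):=k$ for $v_k\in\cV$. Directly from Definition \ref{corpo2} one has $X_t^*-n(X_t)\in\{0,1\}$ for every $t$, since the cell of index $n(X_t)$ is bounded by the last gate visited, $X_\iota$, and the next one; hence $X_t^*/t$ and $n(X_t)/t$ share the same large-deviation behaviour, and it suffices to prove
$$\L(\l)=\lim_{t\to\infty}\frac{1}{t}\log \bbE_{0_*}\bigl[e^{\l\, n(X_t)}\bigr]$$
for every $\l\in\bbR$, with $\L(\l)$ the quantity appearing in the statement.

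To obtain this identification I would argue spectrally. Set $u_v(t;k):=\bbP_{0_*}(X_t=v_k)$ for $v\in V\setminus\{\overline v\}$ and introduce the two-sided Laplace transform $\hat u_v(t;\l):=\sum_{k\in\bbZ}e^{\l k}u_v(t;k)$. Sorting the incoming transitions at $v_k$ according to whether they lie in $\cE_1$, $\cE_2$ or $\cE_3$ (thus carrying cell-displacement $0$, $+1$ or $-1$ respectively) and using \eqref{simm1} to replace the jump rates by the cell-independent constants $r_0,r_\pm$, the Kolmogorov forward equation yields, after multiplying by $e^{\l k}$ and summing over $k$,
$$\partial_t \hat u_v(t;\l)=\sum_{w\in V\setminus\{\overline v\}}\cA_{v,w}(\l)\,\hat u_w(t;\l),\qquad \hat u_v(0;\l)=\mathds 1(v=\underline v).$$
Summing over $v$ and using $\sum_v\hat u_v(t;\l)=\bbE_{0_*}[e^{\l n(X_t)}]$ gives the closed form
$$\bbE_{0_*}\bigl[e^{\l n(X_t)}\bigr]=\bigl\langle \mathbf 1,\,e^{t\cA(\l)}\,\mathbf e_{\underline v}\bigr\rangle,$$
with $\mathbf 1$ the all-ones vector and $\mathbf e_{\underline v}$ the $\underline v$-th coordinate vector in $\bbR^{V\setminus\{\overline v\}}$.

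The asymptotic rate is then extracted by combining Perron--Frobenius with analytic perturbation theory. For each $\l\in\bbR$ the off-diagonal entries of $\cA(\l)$ are non-negative, so for $c>\max_v r(v)$ the matrix $\cA(\l)+c\,\mathrm{Id}$ is a non-negative matrix whose zero/non-zero pattern is $\l$-independent; unfolding the decomposition $\cE=\cE_1\cup\cE_2\cup\cE_3$, its directed support graph is strongly connected thanks to the connectedness of $G$ (with $\overline v\sim\underline v$ in the quotient implemented by the gluing). Hence Perron--Frobenius provides a unique eigenvalue of maximal real part of $\cA(\l)$, namely $\L(\l)$, which is real and algebraically simple, with strictly positive left and right eigenvectors. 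Since $\mathbf 1$ and $\mathbf e_{\underline v}$ pair strictly positively with these eigenvectors, the spectral decomposition $e^{t\cA(\l)}=e^{t\L(\l)}P+O(e^{t\tilde \L(\l)})$ with $\tilde\L(\l)<\L(\l)$ yields $t^{-1}\log\bigl\langle \mathbf 1,\,e^{t\cA(\l)}\,\mathbf e_{\underline v}\bigr\rangle\to\L(\l)$. Finally, $\L(\l)$ is a simple eigenvalue of an entrywise entire matrix family, so by Kato's perturbation theory (or the implicit function theorem applied to the characteristic polynomial) $\L\in C^\omega(\bbR)$; in particular $\L$ is finite everywhere and essentially smooth, and G\"artner--Ellis (cf.\ \cite{dH}) delivers the LDP for $n(X_t)/t$ with good convex rate function $I(\th)=\sup_\l(\th\l-\L(\l))$. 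The main technical obstacle is the Perron--Frobenius step: one must deduce irreducibility of $\cA(\l)+c\,\mathrm{Id}$ cleanly from the connectedness of $G$ via the $\cE_1,\cE_2,\cE_3$ decomposition, and confirm that the probe vectors $\mathbf 1$ and $\mathbf e_{\underline v}$ project non-trivially onto the one-dimensional dominant eigenspace, so that the leading $e^{t\L(\l)}$ asymptotics genuinely dominate.
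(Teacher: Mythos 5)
Your proposal is correct and follows essentially the same route as the paper: reduce to the cell-number process, derive the linear ODE $\partial_t F^{(\l)}=\cA(\l)F^{(\l)}$ for the Laplace-transformed occupation probabilities, identify the limiting cumulant generating function with the Perron--Frobenius eigenvalue of $\cA(\l)+c\,\mathrm{Id}$ shifted by $c$, and conclude by Kato perturbation theory plus G\"artner--Ellis. The only point the paper treats at length that you pass over quickly is the justification of interchanging $\partial_t$ with the infinite sum over cells (one first needs a tail bound such as $\bbP(N_t=n)\leq e^{\k t|n|-|n|\log|n|}$, obtained by comparison with a Poisson clock, to get uniform convergence of the differentiated series); otherwise the two arguments coincide.
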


\subsection{Fluctuation theorems (Gallavotti--Cohen type symmetry)}\label{amiciGC}

\begin{TheoremA}\label{giova}
The following  facts are equivalent:
\begin{itemize}
\item[(i)] For some  $c \in \bbR$  the Gallavotti--Cohen type symmetry\footnote{Sum is thought in $[0,+\infty]$}
$I( \th)=I(-\th)+c \th$
%\end{equation}
holds for all $\th \in \bbR$;
\item[(ii)] The random variables $X_{S}$ and $S$ are independent. 
\end{itemize}
Moreover, when (i),(ii) hold it must be $c= \log \frac{\bbP_{0_*} (X_S=-1_*)}{\bbP_{0_*} (X_S=1_*)}$.
\end{TheoremA}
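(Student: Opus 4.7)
The plan is to encode the Gallavotti--Cohen symmetry as an identity for the moment generating functions (MGFs) $\varphi_\pm$ of the first--passage times $T_{\pm 1}$, and then to use the regenerative property (iv) of Definition \ref{corpo1} to reduce it to an identity for the joint law of $(X_S, S)$. Set $\phi_\pm(\l) := \bbE_{0_*}(e^{\l S}\mathds{1}(X_S = \pm 1_*))$ and $p_\pm := \bbP_{0_*}(X_S = \pm 1_*)$. From the formula $I(\th) = \th J_\pm(1/|\th|)$ of Theorem \ref{baxtalo}(ii), the substitution $u = 1/|\th|$ shows that $I(\th) = I(-\th) + c\th$ is equivalent to the single condition $J_+(u) - J_-(u) = c$ for all $u > 0$. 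Since $J_\pm$ is the convex conjugate of $\log \varphi_\pm$, biconjugation turns this into
\[
\log \varphi_+(\l) - \log \varphi_-(\l) = -c
\]
on the common effective domain of the two MGFs, or equivalently $\varphi_+(\l) = e^{-c}\varphi_-(\l)$; evaluating at $\l = 0$ with $\varphi_\pm(0) = \bbP_{0_*}(T_{\pm 1} < \infty)$ (a standard biased random--walk hitting probability) forces $e^{-c} = p_+/p_-$, so $c = \log(p_-/p_+)$ as stated.

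Next I apply first--step analysis to the excursion from $0_*$. Using property (iv) together with translation invariance (ii), either the skeleton reaches $\pm 1_*$ directly at time $S$ (contributing $\phi_\pm(\l)$), or it first reaches $\mp 1_*$ at cost $\phi_\mp(\l)$ and then requires an independent copy of $T_{\pm 2} = T_{\pm 1} + T_{\pm 1}'$, whose MGF on the event of finiteness is $\varphi_\pm(\l)^2$. This yields the coupled quadratic identities
\[
\varphi_\pm(\l) = \phi_\pm(\l) + \phi_\mp(\l)\,\varphi_\pm(\l)^2.
\]
Substituting the ansatz $\varphi_+ = (p_+/p_-)\varphi_-$ into the $+$-equation and using the $-$-equation to eliminate $\varphi_-^2$, a short rearrangement produces the factorisation
\[
\bigl[\phi_-(\l) - (p_-/p_+)\phi_+(\l)\bigr]\cdot\bigl[1 - (p_+/p_-)\varphi_-(\l)^2\bigr] = 0.
\]
Since $\varphi_-(\l) \to 0$ as $\l \to -\infty$, the second bracket is non--zero on a neighbourhood of $-\infty$, forcing $\phi_+(\l)/p_+ = \phi_-(\l)/p_-$ there, and then on the whole common effective domain by real--analyticity of Laplace transforms. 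This is precisely the MGF characterisation of the independence of $X_S$ and $S$. Conversely, if $\phi_\pm = p_\pm \phi$, then $r := p_-\varphi_+$ and $s := p_+\varphi_-$ both solve the same quadratic $\phi x^2 - x + p_+p_-\phi = 0$; they agree at $\l = 0$ (both equal $\min(p_+,p_-)$), so $r \equiv s$ by continuity and branch selection, which yields the GC symmetry with $c = \log(p_-/p_+)$.

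The main technical difficulty will be the Legendre--Fenchel step at the beginning: biconjugation recovers $\log \varphi_\pm$ from $J_\pm$ only where the MGFs are finite, so the equivalence between ``$J_+ - J_- \equiv c$ on $(0,+\infty)$'' and ``$\log \varphi_+ - \log \varphi_- \equiv -c$ on the common effective domain'' must be justified using the regularity of $J_\pm$ (strict convexity, $C^1$ smoothness, boundary behaviour at $1/\a_\pm$) supplied by Proposition \ref{J_study}. A secondary subtle point is the extension from a one--sided neighbourhood of $-\infty$ to the entire common domain of finiteness in the algebraic step, which again rests on analyticity of the MGFs.
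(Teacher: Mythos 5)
Your proposal is correct and follows essentially the same route as the paper's proof, which reduces Theorem \ref{giova} to Theorem \ref{GC} via Lemma \ref{torlonia}: Legendre/Fenchel--Moreau duality to translate the GC symmetry into $\varphi_+ = C\,\varphi_-$, the renewal quadratic $\varphi_\pm = f_\pm + f_\mp \varphi_\pm^2$ from first--step decomposition, and the identification $C=p/q$ at $\l=0$. The only cosmetic difference is that the paper passes between $\varphi_+=C\varphi_-$ and $f_+=Cf_-$ by reading off the closed form \eqref{cena} (which gives $\varphi_+/\varphi_- = f_+/f_-$ directly in both directions), whereas you use an algebraic factorization plus a branch--selection argument; both work, and you correctly flag the one genuinely delicate point, namely extending $J_+-J_-\equiv c$ from $(0,+\infty)$ to all of $\bbR$ (handling $\th\le 0$ via the behaviour at $\a_\pm$) before invoking Fenchel--Moreau.
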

 Theorem \ref{giova} is an immediate consequence of Lemma \ref{torlonia} and Theorem \ref{GC} in Section \ref{viaggio_astratto}.

\medskip

We continue our investigation of the Gallavotti--Cohen type symmetry (shortly, GC symmetry)  as in Theorem \ref{giova} restricting now    to Markov  random walks $( X_t)_{t \in \bbR_+}$ on quasi 1d lattices.  Recall that we write 
$r(x,y)$, $(x,y) \in \cE$, for the positive jump rates of the Markov  random walk and assume the periodicity \eqref{simm1} to hold.
We restrict our discussion to the case of fundamental graphs $G=(V,E)$ such that 
\begin{equation}\label{mercato}
(x,y) \in E \text{ if and only if  }(y,x) \in E\,,
\end{equation} which is  the standard setting in the investigation of GC symmetry for Markov chains \cite{LS}.

In what follows, given an edge  $(u,v) \in E$  in the fundamental graph $G=(V,E)$,  we define 
\begin{equation}\label{def_r_G} r(u,v) = r (\pi(u), \pi(v) )\,,
\end{equation} where $\pi$ is the map $ V \to \cV$ defined as $\pi(u)= u_0$ if $ u\not = \overline{v}$ and $\pi(\overline{v}) = \underline{v}_1$. Note that, fixed positive numbers $a(e)$, $e\in E$, it is univocally determined a Markov  random walk on $\cG$ whose rates satisfy \eqref{simm1} and such that $r(e) = a(e)$ for all $e \in E$. We call it the Markov random walk induced by $a(e)$, $e \in E$.

% Note that $\pi$ defines a graph isomorphism between $G$ and $G_0=(V_0,E_0)$ where $V_0=\left\{ u_0\,:\, u \in V \setminus \{\bar v\} \right\}\cup \bigl\{ \underline{v}_1 \} $ and $E_0 = \bigl\{ (x,y) \in \cE\,:\, x,y \in V_0\}$. 

We introduce a special class of graphs $G$ which includes in particular trees. Recall that $G$ has connected support when disregarding the orientation of the edges.

\begin{Definition}\label{pasopanori}
We say that the graph  $G= (V,E)$ is \emph{$(\underline{v}, \overline{v})$--minimal} if it   satisfies  \eqref{mercato} and moreover there is  a unique path $\g_*= (z_0,z_1, \dots, z_n)$ such that $\mathrm{(i)}$ $z_0= \underline{v}$,  $\mathrm{(ii)}$ $z_n = \overline{v}$, $\mathrm{(iii)}$ $(z_i,z_{i+1})\in E$ and $\mathrm{(iv)}$ the points $z_0,\dots, z_n$ are all distinct.
\end{Definition}
Note that, given a generic fundamental graph $G=(V,E)$, there exists at least one path $\g= (z_0,z_1, \dots, z_n)$ 
satisfying the above properties (i),...,(iv). Indeed, since $G$ is connected, there exists a path from $\underline{v}$ to $\overline{v}$. Take such a path and prune iteratively the loops. Since each time a loop is pruned away   the length of the path decreases, after a finite number of prunes one gets a minimal path satisfying the above properties (i),...,(iv).

Now suppose that $G=(V,E)$ is  \emph{$(\underline{v}, \overline{v})$--minimal} and take two points $z_i \not = z_j$ (the $z_k$'s are as in the Def. \ref{pasopanori}). Then it cannot exist  a path from $z_i $ to $z_j$ whose intermediate points are in $V \setminus \{z_0,z_1, \dots, z_n\}$. In particular, the graph $G$ must be as in Fig. \ref{sinti} (due to property \eqref{mercato} we only draw the support of $G$, disregarding orientation).  More precisely, the graph is given by the linear chain $\g_*$ of Def. \ref{pasopanori}, to which one attaches some subgraphs, in such a way that each attached subgraph has exactly one point in common with $\g_*$.% We call $G_1, G_2, G_k$ the above attached subgraphs and 
 \begin{figure}[!ht]
    \begin{center}
     \centering
  \mbox{\hbox{
  \includegraphics[width=0.5\textwidth]{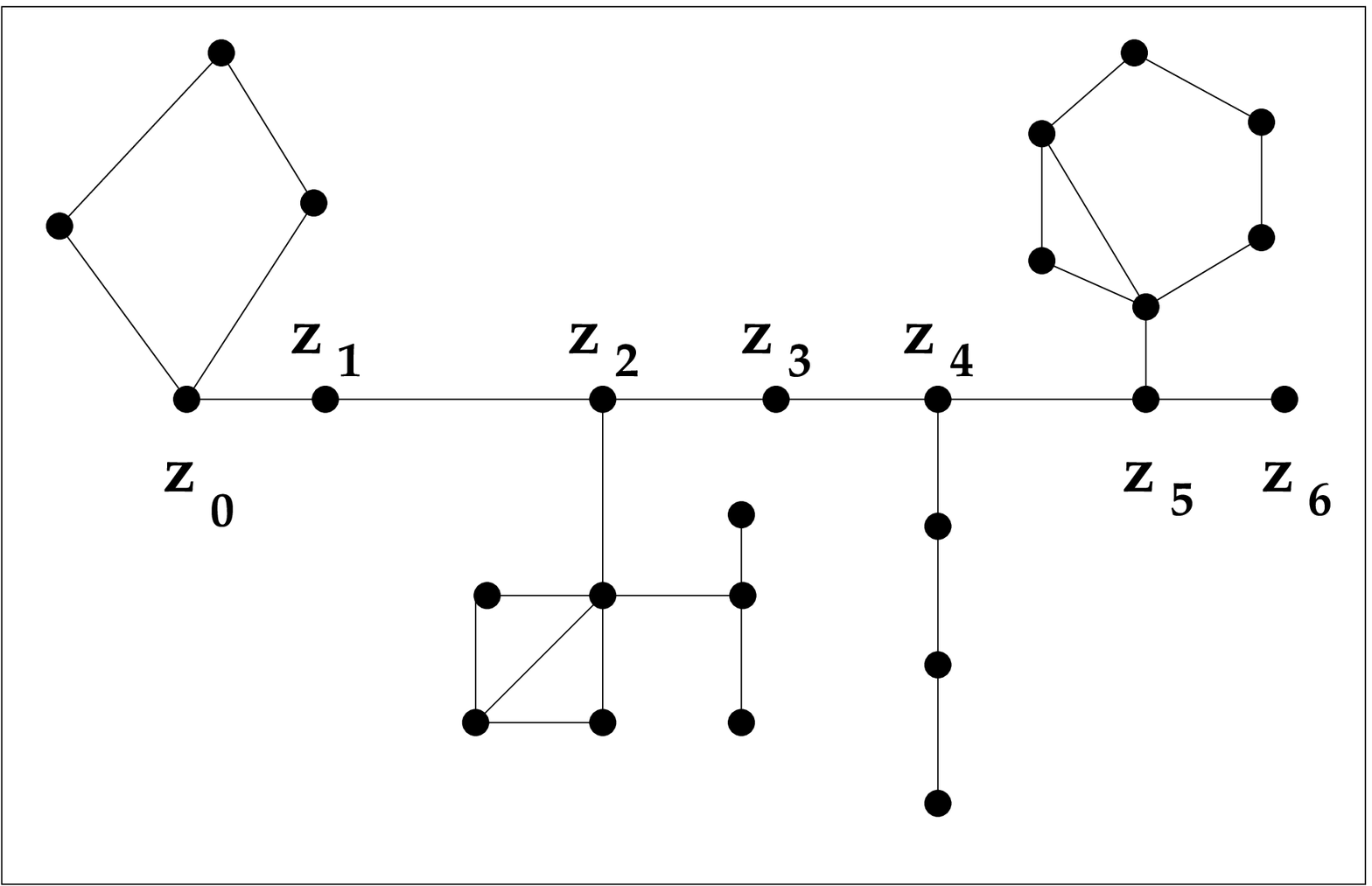}}}
            \end{center}
            \caption{A  \emph{$(\underline{v}, \overline{v})$--minimal}  graph $G=(V,E)$ }\label{sinti}
  \end{figure}

 \begin{TheoremA}\label{teo3}
Suppose that $X$ is a Markov random walk and that $G= (V,E)$ is a graph satisfying \eqref{mercato}.
If $G$ is \emph{$(\underline{v}, \overline{v})$--minimal}, then 
 the  random variables  $S$ and $X_S^*$ are independent, and in particular the large deviation  rate function $I$ associated to the skeleton process $X^*$ satisfies the Gallavotti--Cohen type symmetry 
\begin{equation}\label{verabila}
I(\th)=I(-\th)-\D \,\th\,,\qquad \forall \th\in \bbR\,,
\end{equation}
where 
\begin{equation}\label{alexey}
 \D= \log \frac{ r(z_0,  z_1) r(z_1, z_2) \cdots r(z_{n-1},z_n ) }{  r(z_1,z_0) r(z_2,z_1) \cdots r(z_n,z_{n-1} ) }
\end{equation}
and $\g_*=(z_0,z_1,z_2, \dots, z_{n-1} ,z_n ) $ is the path in Definition \ref{pasopanori}.

Vice versa,  if $G$ is not \emph{$(\underline{v}, \overline{v})$}--minimal then the vectors  $\bigl( r(e) \,: e \in E\bigr) \in  (0,+\infty)^E$   for which the induced random walk on $\cG$ satisfies  \eqref{verabila} 
for some constant $\D$ (depending on the numbers  $r(e)$, $e \in E$)  has zero Lebesgue measure  in $(0,+\infty)^E$.

\end{TheoremA}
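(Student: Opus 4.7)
I would treat the two directions of Theorem~\ref{teo3} separately. For the direct statement (minimal implies GC), Theorem~\ref{giova} reduces the GC symmetry to the independence of $S$ and $X_S^*$ under $\bbP_{0_*}$, with the constant $c$ of that theorem then automatically equal to $-\D$; for the converse I would work through the matrix $\cA(\lambda)$ of Theorem~\ref{GE}.

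Assume $G$ is $(\underline{v},\overline{v})$--minimal with backbone $\g_*=(z_0,\ldots,z_n)$. The key structural feature is that every vertex outside $\g_*$ lies in an attached subgraph touching $\g_*$ at a single vertex $z_j$: hence every subgraph excursion returns to the same $z_j$, and at each visit to $z_j$ the three alternatives (forward chain step, backward chain step, entry into the attached subgraph) are chosen independently of any subgraph history. This lets me collapse all subgraph time into effective holding times $\hat h_j(\lambda)$ at the backbone vertices and reduces the independence claim to $S\perp X_S$ for an $n$--periodic continuous-time nearest-neighbour walk on $\bbZ$ with forward/backward rates $\tilde f_j=r(z_j,z_{j+1})$, $\tilde b_j=r(z_j,z_{j-1})$ (and $\tilde b_0:=r(z_n,z_{n-1})$), started at $0$ and absorbed at $\pm n$. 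Decomposing every trajectory into a geometric sequence of excursions from $0$ returning to $0$, followed by a single terminating excursion, yields
\[
\bbE_{0_*}\bigl[e^{-\lambda S}\mathds{1}(X_S=\pm 1_*)\bigr]=\frac{T_\pm(\lambda)}{1-U(\lambda)},
\]
so the independence claim becomes $T_+(\lambda)/T_-(\lambda)$ constant in $\lambda$. A first-step analysis writes $T_+$ as the product of an elementary factor $\hat h_0(\lambda)\tilde f_0/(\tilde f_0+\tilde b_0)$ and the Laplace generating function $V_1(\lambda)$ of the walk on $\{1,\ldots,n-1\}$ from $1$ to $n$ avoiding $0$; solving the tridiagonal linear system satisfied by $V_1,\ldots,V_{n-1}$ gives $V_1=\bigl(\prod_{j=1}^{n-1}\hat h_j\tilde f_j\bigr)/D(\lambda)$ with $D$ a polynomial in $\hat h_j,\tilde f_j,\tilde b_j$. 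The reflection $k\mapsto-k$ on chain positions identifies $V_1^-$ (the analogue for the walk to $-n$) with the image of $V_1$ under the involution $\s\colon \tilde f_j\leftrightarrow\tilde b_{n-j},\ \hat h_j\leftrightarrow \hat h_{n-j}$. The central step, which I would prove by induction on $n$ directly from the tridiagonal recursion, is that $D$ is $\s$--invariant; this immediately yields $T_+/T_-=\prod_{j=0}^{n-1}(\tilde f_j/\tilde b_j)=e^\D$ identically in $\lambda$, with $\D$ as in \eqref{alexey}.

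For the converse assume $G$ is not $(\underline{v},\overline{v})$--minimal. By Theorem~\ref{GE} and Legendre duality, \eqref{verabila} for some $\D$ is equivalent to $\L(\lambda)=\L(-\lambda-\D)$ for every $\lambda\in\bbR$, where $\L(\lambda)$ is the Perron eigenvalue of $\cA(\lambda)$. Since $\L$ is a branch of the algebraic function defined by $P(\g,\lambda):=\det(\g I-\cA(\lambda))$---polynomial in $\g$, Laurent polynomial in $e^\lambda$---analytic continuation upgrades this real-analytic identity to the polynomial identity $P(\g,\lambda)=e^{k\lambda}P(\g,-\lambda-\D)$, with $k$ the total spread of $e^\lambda$--exponents in $P$. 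Matching coefficients of $\g^i e^{\ell\lambda}$ produces a system of polynomial equations in the rates $r(e)$ with $e^{-\D}$ as an auxiliary unknown. I would use the Kirchhoff/Coates expansion of $\det(\g I-\cA(\lambda))$ to identify the coefficients of the extreme $e^\lambda$--monomials as sums of rate monomials along specific $\underline{v}$--$\overline{v}$ paths in $G$; the symmetry forces these extreme coefficients to be related by a single power of $e^{-\D}$, fixing $e^{-\D}$ as an explicit rate monomial. When $G$ is $(\underline{v},\overline{v})$--minimal the backbone supplies a diagonal similarity $D^{-1}\cA(\lambda)D=\cA(-\lambda-\D)^{T}$ that automatically solves the entire system, but in the non-minimal case the additional simple $\underline{v}$--$\overline{v}$ paths inject further monomials into intermediate coefficients; after eliminating $\D$, these yield non-trivial polynomial relations on the rates and exhibit the GC-symmetric rate vectors as a proper Zariski-closed, hence Lebesgue-null, subset of $(0,+\infty)^E$. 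The main technical obstacle is precisely the elimination of $\D$ together with the verification that the resulting ideal in the rate variables is genuinely non-zero, which I would control via a careful combinatorial analysis of the Newton polytope of $P$ using the non-minimality hypothesis.
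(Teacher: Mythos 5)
Your argument for the direct implication is correct but follows a genuinely different route from the paper's. The paper expands $\tilde f_\pm(\l)$ as a sum over paths from $\underline{v}$ to $\overline{v}$ (cf.\ \eqref{giaco1}) and constructs an explicit bijection $\g\mapsto\g^*$ that reverses each path while keeping its subgraph excursions in their original orientation; since $\g$ and $\g^*$ visit the same multiset of vertices and their backbone edge counts differ by exactly one unit per edge of $\g_*$, every term of $\tilde f_+$ is matched with a term of $\tilde f_-$ times $e^{-\D}$, and the conclusion follows from \eqref{salatini} and Theorem \ref{GC}. Your reduction to an $n$--periodic semi--Markov nearest--neighbour chain on the backbone (folding subgraph excursions into effective holding factors $\hat h_j$, which is legitimate precisely because each attached subgraph meets $\g_*$ in a single vertex) and the $\s$--invariance of the tridiagonal determinant $D$ accomplish the same thing; the invariance of $D$ does hold, since a tridiagonal determinant depends on the off--diagonal entries only through the products $\tilde f_j\tilde b_{j+1}$, which $\s$ permutes while reversing the diagonal. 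The paper's bijection is combinatorially cleaner; your computation is closer in spirit to the explicit matrix methods of the period--$2$ literature.

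The converse is where your proposal has genuine gaps. First, passing from the real identity $\L(\l)=\L(-\l-\D)$ for the Perron eigenvalue to the polynomial identity $P(\g,\l)=e^{k\l}P(\g,-\l-\D)$ is not justified: the Perron branch being common to the two characteristic polynomials for all $\l$ only forces them to share an irreducible factor, not to coincide, unless you additionally prove irreducibility of $P$ (which you do not address). Second, and more seriously, the entire content of the converse is the verification that for a non--minimal $G$ the relations obtained after eliminating $\D$ are \emph{non--trivial}, i.e.\ that some rate vector actually violates the symmetry; you explicitly defer this to ``a careful combinatorial analysis of the Newton polytope'' and never carry it out, so the proof is incomplete at its decisive step. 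The paper handles both issues differently: it shows (Lemma \ref{como} and Proposition \ref{davide}) that $\tilde f_\pm$ extend holomorphically in $(\l,\underline r)$, so that a positive--measure set of symmetric rate vectors would force $\partial_\l(\tilde f_+/\tilde f_-)\equiv 0$ for \emph{all} rates by Weierstrass preparation; it then derives a contradiction by degenerating the rates so that only two competing simple $\underline{v}$--$\overline{v}$ paths $\g^{(1)},\g^{(2)}$ of lengths $M,M'$ survive, obtaining \eqref{mammina} and expanding $\tilde f_{1,+},\tilde f_{2,+}$ as $\l\to-\infty$ to conclude that constancy of the ratio forces $M=M'$ and $\sum_i r(z_i)=\sum_i r(z_i')$, which fails for an explicit choice of rates. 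Some analogue of this explicit degeneration--and--asymptotics step (or another concrete witness of non--triviality) must be supplied before your Zariski--closedness argument yields the Lebesgue--null conclusion.
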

The proof of the above theorem is given in Section \ref{secGC_bis}.

%%%%%%%%%%%%%%%%%%%%%%%%%%%%

%%%%%%%%%%%%%%%%%%%%%%%%%%%%%%%%%%%%%%

%%%%%%%%%%%%%%%%%%%%%%%%%%%%%%%%%%%%%%%%

\section{Random time changes of cumulative processes}\label{viaggio_astratto}

As already mentioned, the results presented in Section \ref{panettone} hold in a more general context that we now describe.
Consider a sequence $( w_i, \t_i )_{i \geq 1}$ of i.i.d.   2d vectors with values in
$\bbR \times (0,+\infty)$.  For each 
integer $ m \geq 1$ we define 
\begin{align}
&W_m:= w_1+w_2 + \cdots + w_m\,,\\
&\cT_m:= \t_1+\t_2 + \cdots+ \t_m\,.
\end{align}
We set $W_0=\cT_0=0$.
Note that 
%y the law of large numbers, $ \lim _{m \to \infty} \cT_m/m= \bbE \t_i$ a.s.,
 %thus implying that  
$ \lim _{m \to \infty} \cT_m= +\infty$ a.s.  As a consequence, we can univocally define a.s. a random process $\left\{\nu(t)   \right\}_{t \in \bbR_+}$ with values in $\{0,1,2,3, \dots\}$ such that
\begin{equation}\label{warwick}
\cT_{\nu(t)}\leq t < \cT_{\nu(t)+1 }\,, \qquad t \geq 0 
\,.
\end{equation}
Note that $\nu(t)= \max \{ m\in \bbN :\, \cT_m \leq t \}$.
Finally, we define the process $Z:[0,\infty) \to \bbR$ as
\begin{equation}\label{zorro} Z_t:= W_{\nu(t) }\,.
\end{equation}
Note that $Z_0=0$. 
The resulting process $Z= (Z_t)_{t \in \bbR_+}$   is therefore obtained from the cumulative process $(W_m)_{m \geq 0}$ by  a random time change, and  generalizes the concept of (time--homogeneous)  random walk on $\bbR$. For example, if $w_i $ and $\t_i$ are independent and $\t_i$ is an exponential variable of parameter $\l$, then the process $Z$ is a continuous time Markov  random walk with exponential holding times of parameter $\l$ and 
with jump probability given by the law of $w_i$. If $\t_i \equiv 1$ for all $i$, then $Z_t= W_{\lfloor t \rfloor }$ ($\lfloor \cdot \rfloor$ denoting the integer part) and  $(Z_n)_{n \in \bbN}$ 
 is a discrete time  Markov random walk on $\bbR$ with jump probability given by the law of $w_i$.

\smallskip

Due to Definitions \ref{corpo1} and \ref{corpo2}   the skeleton process $X^*$ is indeed a special case of  process $Z$ (recall the definition of the random time $S$ given in \eqref{pietro}):
%%%%%%%%%%%%%%%%%%%%
\begin{Lemma}\label{torlonia}
Consider a sequence $( w_i, \t_i )_{i \geq 1}$ of i.i.d.   vectors, with the same law of  the random  vector $\bigl(X^*_S,S\bigr)\in \{-1,1\} \times (0,+\infty)$   when the random walk $(X_t)_{t \in \bbR_+}$ 
starts at $0_*$. We define $(Z_t )_{t \in \bbR_+}$ as the stochastic process built from  $( w_i, \t_i )_{i \geq 1}$ according to \eqref{zorro}. Then 
$(Z_t)_{t \in \bbR_+}$ has the same law of $( X_t^*)_{t \in \bbR_+} $ with $X^*_0= 0$. \end{Lemma}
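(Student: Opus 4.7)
\medskip

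\noindent\textbf{Proof plan for Lemma \ref{torlonia}.}

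The plan is to build the discrete sequence of i.i.d.\ vectors $(w_k,\tau_k)$ out of the skeleton process by iterated regeneration, and then verify that the resulting time--changed sum reproduces $X^*$ pathwise. The main obstacle will be carefully combining the regeneration property (iv) with the shift invariance (ii) to get i.i.d.\ (not merely Markov) increments, and a small topological check that $X$ cannot ``skip over'' any gate $m_*$ between two successive visits to gates.

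Define the sequence of gate--to--gate times under $\bbP_{0_*}$ recursively by $S_0:=0$ and, for $k\geq 0$,
$$
S_{k+1}:=\inf\bigl\{t>S_k\,:\,X_t\in\{(\Phi(X_{S_k})-1)_*,\,(\Phi(X_{S_k})+1)_*\}\bigr\},
$$
and set $\tau_k:=S_k-S_{k-1}$ and $w_k:=\Phi(X_{S_k})-\Phi(X_{S_{k-1}})\in\{-1,+1\}$ for $k\geq 1$. By construction $\tau_1=S$ and $w_1=X^*_S$, so $(w_1,\tau_1)$ has the prescribed common law.

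The first key step is to show that $(w_k,\tau_k)_{k\geq 1}$ are i.i.d.\ with the common law of $(X^*_S,S)$ under $\bbP_{0_*}$. I would argue by induction on $k$. Conditionally on $X_{S_1}=\pm 1_*$, property (iv) says that $Y:=(X_{S_1+t})_{t\geq 0}$ is independent of $(X_t)_{t\in[0,S_1]}$ and has law $\bbP_{\pm 1_*}$. By property (ii), $\widetilde Y:=\cT^{\mp 1}Y$ has law $\bbP_{0_*}$, and for this shifted process the analog of the stopping time $S$ is exactly $S_2-S_1=\tau_2$, while its analog of $X^*_S$ equals $w_2$ (the shift cancels the fixed offset $\pm 1$ in the $\Phi$--coordinate). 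Hence $(w_2,\tau_2)$ has the same law as $(X^*_S,S)$ under $\bbP_{0_*}$ and is independent of $\sigma(X_t:t\in[0,S_1])\supseteq\sigma(w_1,\tau_1)$. Iterating this regeneration--plus--shift argument yields the i.i.d.\ statement.

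The second key step is to identify $X^*$ pathwise with the time--changed walk $Z$ built from this sequence. Telescoping gives $\cT_m=\sum_{i=1}^m\tau_i=S_m$ and $W_m=\sum_{i=1}^m w_i=\Phi(X_{S_m})$. For any $t\geq 0$, let $k:=\max\{m\geq 0:S_m\leq t\}$, so that $\nu(t)=k$ and $Z_t=W_k=\Phi(X_{S_k})$. It remains to verify that $X^*_t=\Phi(X_{S_k})$ too. By the structure of $\cE$, every edge stays inside one fundamental cell or crosses from cell $n$ to cell $n\pm 1$ through the shared gate $(n+1)_*$ or $n_*$; consequently the process cannot reach a gate $m_*$ with $m\notin\{\Phi(X_{S_k})-1,\Phi(X_{S_k}),\Phi(X_{S_k})+1\}$ during $[S_k,S_{k+1})$, and by definition of $S_{k+1}$ it does not hit the two neighboring gates before $S_{k+1}$ either. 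Therefore the only gate visited in $[S_k,S_{k+1})$ is $X_{S_k}$ itself, so the supremum $\iota$ appearing in Definition \ref{corpo2} lies in $[S_k,t]$ with $X_\iota=X_{S_k}$. Hence $X^*_t=\Phi(X_{S_k})=W_{\nu(t)}=Z_t$, which shows that the two processes coincide as random elements of $D(\bbR_+;\bbZ)$ and completes the proof.
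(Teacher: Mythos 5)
Your proof is correct: the paper omits the proof of Lemma \ref{torlonia} as ``very simple,'' and your argument --- defining the successive gate--hitting times $S_k$, combining the regeneration property (iv) with the shift invariance (ii) to obtain i.i.d.\ increments $(w_k,\tau_k)$ with the law of $(X^*_S,S)$, and checking via the structure of $\cE$ that no gate other than $X_{S_k}$ is visited on $[S_k,S_{k+1})$ so that $X^*_t=\Phi(X_{S_{\nu(t)}})=Z_t$ pathwise --- is precisely the intended one. No gaps.
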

The proof of the above lemma is  very simple and therefore omitted.
	 We recall also the LLN discussed in \cite{FS}[Appendix A]:
 \begin{Proposition}[FS]\label{teo1}  If  $ \bbE( \t_i) < \infty$, then almost 
 surely $ \lim_{t \to \infty} \frac{Z_t}{t} = v:= \frac{ \bbE (w_i) }{\bbE (\t_i)}$.
\end{Proposition}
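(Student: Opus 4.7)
The proof is a standard application of the strong law of large numbers combined with the elementary control of the random index $\nu(t)$, so I would organize it as follows.

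First, I would invoke the SLLN for the i.i.d.\ sequences $(\t_i)$ and $(w_i)$ to obtain, almost surely,
\begin{equation*}
\lim_{m\to\infty} \frac{\cT_m}{m} = \bbE(\t_i) \in (0,\infty), \qquad \lim_{m\to\infty} \frac{W_m}{m} = \bbE(w_i),
\end{equation*}
where the first limit is finite and strictly positive (since $\t_i > 0$ a.s.\ and $\bbE(\t_i) < \infty$), and the second is well defined as an element of $\bbR$ under the tacit integrability assumption on $w_i$. Since $\cT_m \to \infty$ a.s.\ as $m\to\infty$, the definition of $\nu(t)$ via \eqref{warwick} forces $\nu(t) \to \infty$ almost surely as $t\to\infty$.

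Next I would combine these facts with the sandwich inequality $\cT_{\nu(t)} \leq t < \cT_{\nu(t)+1}$. Dividing by $\nu(t)$ gives
\begin{equation*}
\frac{\cT_{\nu(t)}}{\nu(t)} \;\leq\; \frac{t}{\nu(t)} \;<\; \frac{\cT_{\nu(t)+1}}{\nu(t)+1}\cdot\frac{\nu(t)+1}{\nu(t)},
\end{equation*}
and since $\nu(t)\to\infty$ a.s., both bounds tend to $\bbE(\t_i)$. Hence $t/\nu(t) \to \bbE(\t_i)$ a.s., equivalently $\nu(t)/t \to 1/\bbE(\t_i)$ a.s.

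Finally, recalling that $Z_t = W_{\nu(t)}$, I would write
\begin{equation*}
\frac{Z_t}{t} \;=\; \frac{W_{\nu(t)}}{\nu(t)} \cdot \frac{\nu(t)}{t},
\end{equation*}
and pass to the limit: the first factor converges a.s.\ to $\bbE(w_i)$ (by the SLLN applied along the random but a.s.\ divergent subsequence $\nu(t)$), while the second factor converges a.s.\ to $1/\bbE(\t_i)$. The product yields $v = \bbE(w_i)/\bbE(\t_i)$, as claimed.

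There is no real obstacle here; the only point worth a word of care is the evaluation of $W_m/m$ along the random index $\nu(t)$, which is legitimate because the SLLN convergence holds on a full-measure event simultaneously for every deterministic divergent subsequence, and $\nu(t)\to\infty$ almost surely on that event.
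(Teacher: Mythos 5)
Your proof is correct and is precisely the standard renewal/random-time-change argument (SLLN for $\cT_m$ and $W_m$, the sandwich $\cT_{\nu(t)}\leq t<\cT_{\nu(t)+1}$ to get $\nu(t)/t\to 1/\bbE(\t_i)$, then the product decomposition $Z_t/t=(W_{\nu(t)}/\nu(t))\cdot(\nu(t)/t)$); the paper itself defers the proof to the companion paper [FS, Appendix A], where the same route is taken. Your remark that $\bbE|w_i|<\infty$ is tacitly needed for the statement to make sense is an appropriate caveat.
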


 We now  state our main results for $(Z_t)_{t \in \bbR_+}$:
\begin{TheoremA} \label{teo2} [LDP]
Suppose that 
\begin{itemize}

\item[(A1)] $w_i \in \{-1,1\}$ a.s. 

\item[(A2)] $\bbP( w_1=+1)>0$ and $\bbP( w_1=-1)>0$.

%\item[(A3)] \club 
\end{itemize}
Set $T_n := \inf \left\{ t\in \bbR_+\,:\, Z_t=n \right\}\in [0,+\infty]$. Define $J_\pm$ and $\varphi_\pm$ as in \eqref{pocoyo1} and \eqref{pocoyo2}. 
Then  the following holds:
\begin{itemize}
\item[(i)]
 As $n \to \pm \infty$ the random  variables  $T_n\,/\,|n|$ 
satisfy a LDP with speed $|n|$ and convex rate function $J_\pm$. 
The rate function $J_\pm$ is  good  if and only if $\bbP(T_1 < \infty)\not = \bbP(T_{-1}< \infty)$.
 
 \item[(ii)]
 As $t \to +\infty$,  the random variables   $Z_t /t$ satisfy a LDP with speed $t$ and good and convex rate function $I$
given by  
\begin{equation} \label{rate_f100}
I(\th)= 
\begin{cases}
\th J_+ (1 / \th) & \text{ if }\qquad  \th >0\,,\\
|\th |J_-(1 / |\th |) & \text{ if }\qquad  \th  <0\,,\\
\end{cases}
\end{equation}
and $I(0) = \lim _{\th \to 0} I(\th )$.
\end{itemize}
 \end{TheoremA}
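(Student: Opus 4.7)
The plan is to first establish the LDP for the first-passage times $T_n$ (part (i)) by exploiting the nearest-neighbor character of $Z$ under (A1) to reduce it to a sum of i.i.d.\ $(0,+\infty]$-valued variables and applying a defective Cram\'er theorem, then to derive the LDP for $Z_t/t$ (part (ii)) via the standard duality $\{Z_t \geq n\} \subseteq \{T_n \leq t\}$, following the scheme of \cite{DGZ}.

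\emph{Proof of (i).} Because $w_i\in\{-1,+1\}$ and $Z_0=0$, the walk must visit every integer between $0$ and $n$ to reach $n$, and each $T_k$ coincides with some jump epoch $\cT_m$. By the i.i.d.\ structure of the pairs $(w_i,\t_i)$, the post-$T_{n-1}$ sequence is independent of the past with the original law; on $\{T_{n-1}<\infty\}$ the increment $T_n-T_{n-1}$ is therefore distributed as $T_1$ and independent of $T_{n-1}$. With the convention $a+\infty=\infty$ this gives, for $n\geq 1$,
\begin{equation*}
T_n\stackrel{d}{=}\tilde T_1^{(1)}+\cdots+\tilde T_1^{(n)},
\end{equation*}
with $\tilde T_1^{(k)}$ i.i.d.\ copies of $T_1$, and an analogous decomposition for $T_{-n}$. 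The MGF identity $\bbE(e^{\l T_n}\mathds{1}(T_n<\infty))=\varphi_+(\l)^n$ is then immediate, and Chernoff's bound delivers the LDP upper bound with rate $J_+$ (the atom $\{T_n=\infty\}$ does not contribute to $\{T_n/n\in\cC\}$ for $\cC\subset\bbR$). A matching lower bound is produced by an exponential tilt of the conditional law of $\tilde T_1$ given $\{T_1<\infty\}$: for each $u$ in the interior of the effective domain of $J_+$ there is a unique tilt with mean $u$, and the usual change-of-measure plus weak law yields $\liminf n^{-1}\log\bbP(T_n/n\in\mathrm{nbhd}(u))\geq -J_+(u)$; boundary points follow by lower semicontinuity. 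The goodness dichotomy reduces to computing the limits of $J_\pm$ at the endpoints of its effective domain, the key being that $\lim_{u\to\infty}J_\pm(u)=-\log\bbP(T_{\pm 1}<\infty)$; combined with the elementary observation that at least one of $\bbP(T_{\pm 1}<\infty)$ must equal $1$ (the LLN dichotomy: drift or recurrence), the criterion $\bbP(T_1<\infty)\neq \bbP(T_{-1}<\infty)$ emerges.

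\emph{Proof of (ii).} Nearest-neighbor structure gives $\{Z_t\geq n\}\subseteq\{T_n\leq t\}$ for $n\geq 0$ and symmetrically for $n\leq 0$. For $\th>0$, the upper bound follows from
\begin{equation*}
\bbP(Z_t/t\geq \th) \leq \bbP(T_{\lceil \th t\rceil}\leq t),
\end{equation*}
whose logarithmic asymptotics by part (i), with $n=\lceil \th t\rceil$ and target $u=t/n\to 1/\th$, give $\limsup t^{-1}\log\bbP(Z_t/t\geq \th)\leq -\th J_+(1/\th)=-I(\th)$. The matching lower bound is produced by the regenerative structure at $T_n$: on $\{T_n\leq t<T_{n+1}\}$ we have $Z_t=n$, and by independence of the post-$T_n$ holding time,
\begin{equation*}
\bbP(Z_t=n)\geq \bbP\bigl(T_n/n\in\mathrm{nbhd}(1/\th)\bigr)\cdot \bbP(\t_1>\e),
\end{equation*}
with $n\sim \th t$ and $\e$ small enough that the second factor is subexponential in $t$. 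The case $\th<0$ is symmetric, and $\th=0$ follows from lower semicontinuity and the fact that $I$ has a unique zero at the asymptotic velocity $v$. Convexity, goodness of $I$ and the explicit formula \eqref{rate_f100} follow from the change of variables $u\leftrightarrow 1/\th$ combined with the properties of $J_\pm$ from part (i).

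\emph{Main obstacle.} The principal technical difficulty is that we do not assume the $\t_i$ have finite logarithmic MGF (so the hypothesis of \cite{DRL} fails), we allow $(w_i,\t_i)$ to be correlated, and we allow $\a_\pm$ to be either zero (as in the Markov case) or strictly positive. These features preclude a direct appeal to off-the-shelf LDPs for random time changes and force the (A1) reduction to a one-dimensional defective Cram\'er problem. Carefully tracking the atom at $+\infty$ in the laws of $T_{\pm 1}$ and matching the LDP bounds at the boundary points $\pm 1/\a_\pm$ of the effective domain of $I$ (in particular when $\a_\pm=0$ and the domain is unbounded) is where the bulk of the technical work lies.
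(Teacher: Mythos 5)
Your part (i) follows essentially the paper's route: reduce $T_n$ to an i.i.d.\ sum of copies of $T_1$ taking values in $(0,+\infty]$, condition on $\{T_n<\infty\}$ (whose probability is $\bbP(T_1<\infty)^n$), and apply Cram\'er. The one slip there is the goodness criterion: the claim $\lim_{u\to\infty}J_\pm(u)=-\log\bbP(T_{\pm1}<\infty)$ is false whenever the drift is nonzero. For instance if $v>0$ then $\bbP(T_1<\infty)=1$, yet $J_+(u)\to+\infty$ because $\varphi_+$ is finite on $(-\infty,\lambda_c]$ with $\lambda_c>0$, so $J_+(u)\ge\lambda u-\log\varphi_+(\lambda)$ for a fixed $\lambda\in(0,\lambda_c)$. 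The correct dichotomy (which the paper extracts from the explicit formula \eqref{cena}) is: $J_\pm(u)\to+\infty$ iff $\lambda_c>0$ iff $\bbP(w_1=1)\neq\bbP(w_1=-1)$ iff $\bbP(T_1<\infty)\neq\bbP(T_{-1}<\infty)$.

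The serious gaps are in part (ii). For the lower bound, the implication ``$T_n\le t<T_{n+1}\Rightarrow Z_t=n$'' is false (the walk can drop well below $n$ without hitting $n+1$), and the proposed fix --- hit $n$ near time $t$ and then hold --- does not work: the LDP for $T_n/n$ only localizes $T_n$ in a window of width of order $\delta t$, so bridging to time $t$ would cost $\bbP(\tau_1>\delta t)$, which decays exponentially or vanishes. The paper instead introduces the event $B_t$ that at most $ct$ jumps occur in $[t-\delta t,t+\delta t]$, deduces $|Z_t-\lfloor\th t\rfloor|\le ct$ on $A_t\cap B_t$, and shows via a Bernoulli Cram\'er estimate (Lemma \ref{pesca}) that $\bbP(B_t^c)/\bbP(A_t)\to0$ for a carefully tuned $\delta=\delta_*(\th,c)$; this tuning is exactly where the absence of a lower bound on the $\tau_i$ matters. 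For the upper bound, the inclusion $\{Z_t\ge\th t\}\subseteq\{T_{\lceil\th t\rceil}\le t\}$ yields $-\th\inf_{(-\infty,1/\th]}J_+$, which equals $-\th J_+(1/\th)=-I(\th)$ only when $1/\th\le\th_c^+$, i.e.\ $\th\ge v$; for $0<\th<v$ the infimum is attained at $\th_c^+=1/v$ where $J_+$ vanishes, and the bound is trivially $0$. The slow-down regime requires controlling backtracking after time $t$, i.e.\ Proposition \ref{cuore} ($\limsup_t t^{-1}\log\bbP(S_t<\infty)\le-I(0)$ with $S_t=\inf\{s\ge t:Z_s\le0\}$) together with the recursive decomposition over excursion scales leading to \eqref{trucco}; this is the bulk of the paper's proof of (ii) and is entirely absent from your sketch, even though you cite \cite{DGZ} as the template.
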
 
 The proof of the above result is given in Sections \ref{montagna} and \ref{mec_yek3}. 
%Above we have used the terminology of \cite{dH}. In particular,

 %%%%%%%%%%  ho spostato

%\centerline{\club \club Modificare parte rimanente \club \club}

We  introduce the  functions $f_\pm$ on $\bbR$ as 
	\begin{equation}\label{cioccolato}
   f_\pm (\lambda ) := \bbE \big( e^{\lambda \tau_1} \mathds{1}(w_1 =\pm 1) \big) \in (0,+\infty] \,.
 \end{equation}
 Note that $f_\pm(\l) >0$ under  Assumption (A2). We call $\a_\pm$ the minimum value in the support of the law of $T_{\pm 1}$.

%%%%%%%%%%%%%%%%%%%%%%%%%%%%%%%%%%%%%%%%% 
\begin{Proposition}\label{sirius_black} 
Under Assumptions (A1) and (A2)  of the previous theorem the following holds:

\begin{itemize}
\item[(i)]  The function $\varphi_\pm(\l)$ satisfies
\begin{equation} \label{cena}
\varphi_\pm (\l) = \frac{ 1 - \sqrt{ 1- 4 f_-(\l)f_+(\l) } }{ 2 f_\mp (\l) }
\end{equation}
for $\l \leq \l_c$, where $\l_c\in [0, +\infty )$ is the unique value of $\l$ such that $ f_-(\l)f_+(\l) = 1/4$, while $\varphi_\pm (\l) =+\infty$ for $\l > \l_c$.

\item[(ii)]
Consider the measure $\mu_\pm$ on $[0,+\infty)$ such that $\mu_\pm (A)= \bbP( \t_1 \in A, w_1=\pm 1)$ for any Borel $A \subset \bbR$. Then $\a_\pm$ is the minimum  value in  the support of $\mu_\pm$.
Moreover $\bbP(T_{\pm 1}= \a_\pm)= \bbP (\t_1= \a_\pm , w_1=\pm 1)$. 

\end{itemize}
\end{Proposition}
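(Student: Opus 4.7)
The plan is to prove both parts via a single first-step decomposition of $T_{\pm 1}$. Conditioning on the first jump and exploiting the i.i.d.\ structure of $(w_i,\t_i)_{i\geq 1}$, one obtains the a.s.\ identity
\[
T_{\pm 1}=\t_1\mathds{1}\{w_1=\pm 1\}+\bigl(\t_1+\hat T_{\pm 1}+\tilde T_{\pm 1}\bigr)\mathds{1}\{w_1=\mp 1\},
\]
where $\hat T_{\pm 1},\tilde T_{\pm 1}$ are independent copies of $T_{\pm 1}$, independent of $(w_1,\t_1)$, with the convention $\infty+\cdot=\infty$. This reflects the fact that if the walk initially jumps the wrong way, it must first return to $0$ (a time piece with the same law as $T_{\pm 1}$ by i.i.d.--ness) and then hit $\pm 1$ (another independent copy).

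For part (i), I would multiply by $e^{\l T_{\pm 1}}\mathds{1}\{T_{\pm 1}<\infty\}$ and take expectations; independence of the three pieces converts the decomposition into the quadratic fixed-point relation
\[
\varphi_\pm(\l)=f_\pm(\l)+f_\mp(\l)\,\varphi_\pm(\l)^2,
\]
valid whenever $\varphi_\pm(\l)<\infty$. The roots are $(1\pm\sqrt{1-4f_+f_-})/(2f_\mp)$, and the minus branch is the correct one: at $\l=0$ the two root values are $1$ and $p_\pm/p_\mp$ (with $p_\pm:=\bbP(w_1=\pm 1)$), and matching $\varphi_\pm(0)=\bbP(T_{\pm 1}<\infty)$ against the classical return probability for a biased nearest-neighbor walk on $\bbZ$ pins down the minus sign; monotonicity of $\varphi_\pm$ and continuity of the radical extend the choice throughout $\{4f_+f_-<1\}$. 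Since each $f_\pm$ is nondecreasing, convex, finite at $0$, and diverges to $+\infty$ (using $\t_1>0$ a.s.\ together with (A2)), the product $4f_+f_-$ is continuous and strictly increasing from $4p_+p_-\le 1$ at $\l=0$ to $+\infty$, yielding a unique $\l_c\in[0,+\infty)$ with $4f_+(\l_c)f_-(\l_c)=1$. For $\l>\l_c$ the quadratic admits no real root, so a finite $\varphi_\pm(\l)$ would contradict the functional equation; hence $\varphi_\pm(\l)=+\infty$.

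For part (ii), set $\a_\pm^\mu:=\min\operatorname{supp}\mu_\pm$. The direct-jump branch of the decomposition immediately gives $\a_\pm\le\a_\pm^\mu$. For the reverse, fix $\e<\a_+^\mu$ and set $p_\e:=\bbP(T_1<\e,T_1<\infty)$. Since the direct branch forces $T_1=\t_1\ge\a_+^\mu>\e$, only the indirect branch can contribute, and independence of the three pieces yields the self-improving bound
\[
p_\e\le \mu_-([0,\e))\cdot p_\e^2\le \bbP(w_1=-1)\cdot p_\e^2,
\]
with $\bbP(w_1=-1)<1$ by (A2). This forces $p_\e=0$, hence $\a_+\ge\a_+^\mu$. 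The atom identity $\bbP(T_1=\a_+)=\bbP(\t_1=\a_+,w_1=+1)$ follows in the same spirit: the indirect branch would demand $\t_1+\hat T_1+\tilde T_1=\a_+^\mu$ with $\t_1>0$ and $\hat T_1,\tilde T_1\ge\a_+^\mu$, which is impossible unless $\a_+^\mu=0$, a case in which both sides vanish since $\t_1>0$ a.s. The $T_{-1}$ case is symmetric. The main obstacle is the branch selection for the square root and the extension of the identity beyond $\l_c$; once the first-step decomposition is in hand, the support and atom arguments in part (ii) are self-bootstrapping but routine.
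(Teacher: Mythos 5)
Your proposal is correct and follows essentially the same route as the paper: the one-step decomposition of $T_{\pm1}$ into the direct branch and the branch $\t_1+\hat T_{\pm1}+\tilde T_{\pm1}$, the resulting quadratic fixed-point equation $\varphi_\pm=f_\pm+f_\mp\varphi_\pm^2$ for part (i), and the self-improving inequality $p_\e\le\bbP(w_1=\mp1)\,p_\e^2$ for the support statement in part (ii). You in fact supply slightly more detail than the paper on the branch selection for the square root and on the atom identity, and these details are sound.
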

The proof is given at the beginning of  Subsection \ref{ospedale}.

\medskip

The qualitative behavior of the rate function $I(\th)$ is described by the following theorem (for the qualitative behavior of $J_\pm$ see Proposition \ref{J_study}):
\begin{TheoremA}\label{acqua}
Theorem \ref{I_study} remains valid in the present more general context, with $v$ defined as in Proposition \ref{teo1}.\end{TheoremA}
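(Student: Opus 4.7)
My plan is to derive each item of Theorem \ref{acqua} from the corresponding statement in Proposition \ref{J_study} by means of the explicit contraction
\begin{equation*}
I(\theta)=\theta\, J_+(1/\theta)\ \text{for}\ \theta>0,\qquad I(\theta)=|\theta|\, J_-(1/|\theta|)\ \text{for}\ \theta<0,
\end{equation*}
supplied by Theorem \ref{teo2}(ii). This is the perspective transform of $J_\pm$, which preserves convexity, smoothness and monotonicity, so most items reduce to routine calculus along the diffeomorphism $\theta\mapsto 1/|\theta|$ between $(0,1/\alpha_\pm)$ and $(\alpha_\pm,+\infty)$.

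I would treat the positive half-axis first. For $\theta\in(0,1/\alpha_+)$ the substitution $u=1/\theta$ maps this interval bijectively onto $(\alpha_+,+\infty)$, where by Proposition \ref{J_study} the function $J_+$ is finite, smooth, and strictly convex; transferring via the perspective formula yields items (i) and the strict-convexity part of (iv) on this half. Since $J_+\equiv+\infty$ on $[0,\alpha_+)$, one also obtains $I\equiv+\infty$ on $(1/\alpha_+,+\infty)$. For (ii) one takes limits through the identity: $\lim_{\theta\nearrow 1/\alpha_+}I(\theta)=\alpha_+^{-1}J_+(\alpha_+)$, and by Proposition \ref{sirius_black}(ii) this is finite exactly when $\bbP(T_1=\alpha_+)=\bbP(\tau_1=\alpha_+,w_1=1)>0$, matching \eqref{destra}. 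For (iii), direct differentiation gives
\begin{equation*}
I'(\theta)=J_+(1/\theta)-\theta^{-1}J_+'(1/\theta),
\end{equation*}
and Proposition \ref{J_study} guarantees $J_+'(u)\to-\infty$ as $u\searrow\alpha_+$, so $I'(\theta)\to+\infty$. The negative half-axis is handled symmetrically.

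The one non-routine step is the gluing at $\theta=0$. Using the explicit formula of Proposition \ref{sirius_black}(i), a standard Legendre-transform asymptotic yields $J_\pm(u)/u\to\lambda_c$ as $u\to+\infty$, where $\lambda_c$ is the \emph{common} critical value solving $f_-(\lambda_c)f_+(\lambda_c)=1/4$. Hence both one-sided limits of $I$ at $0$ equal $\lambda_c$, so the prescription $I(0):=\lim_{\theta\to 0}I(\theta)$ is consistent and $I$ is continuous on $\bbR$. Upgrading continuity to $C^1$ at $0$ requires a careful analysis of the optimising $\lambda=\lambda_\pm(\theta)$ in $J_\pm(1/\theta)=\sup_\lambda\{\lambda/\theta-\log\varphi_\pm(\lambda)\}$, showing that $\lambda_\pm(\theta)\nearrow\lambda_c$ as $\theta\to 0^\pm$ and that the resulting one-sided values of $I'(0)$ agree; this is the main technical obstacle, and it is precisely here that the sharing of $\lambda_c$ between the two transforms enforced by Proposition \ref{sirius_black}(i) is essential.

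Once continuity at $0$ is in place, global convexity and lower semicontinuity on $\bbR$ follow from convexity on each open half together with finiteness at $0$, settling (iv). For (v), Proposition \ref{teo1} yields $Z_t/t\to v$ almost surely with $v=\bbE(w_1)/\bbE(\tau_1)$, and the strict inclusion $v\in(-1/\alpha_-,1/\alpha_+)$ comes from Jensen applied to the conditional laws appearing in Proposition \ref{sirius_black}(ii). The LDP lower bound on any neighbourhood of $v$ combined with non-negativity of the rate function forces $I(v)=0$, and strict convexity on the open interval makes $v$ the unique minimiser and gives the claimed strict monotonicity on $(-1/\alpha_-,v)$ and $(v,1/\alpha_+)$.
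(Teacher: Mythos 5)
Your overall route is the same as the paper's: every item is pulled back from Proposition \ref{J_study} through the perspective transform $\theta\mapsto |\theta|\,J_\pm(1/|\theta|)$, and the only delicate point is the junction at $\theta=0$. But that junction is exactly where your proposal stops short of a proof. You correctly reduce continuity at $0$ to $J_\pm(u)/u\to\lambda_c$ as $u\to+\infty$, but for the $C^1$ statement you only announce that ``the resulting one-sided values of $I'(0)$ agree'' and attribute this to the two transforms ``sharing'' $\lambda_c$. Sharing the critical abscissa is not enough. What the computation actually gives (cf.\ \eqref{matteo} and its mirror) is $\lim_{\theta\searrow0}I_+'(\theta)=-\log\varphi_+(\lambda_c)$ and $\lim_{\theta\nearrow0}I_-'(\theta)=\log\varphi_-(\lambda_c)$; their equality is the nontrivial identity $\varphi_+(\lambda_c)\,\varphi_-(\lambda_c)=1$, which must be extracted from \eqref{cena} at $\lambda=\lambda_c$, where the square root vanishes, so that $\varphi_\pm(\lambda_c)=1/\bigl(2f_\mp(\lambda_c)\bigr)$ and the product equals $1/\bigl(4f_+(\lambda_c)f_-(\lambda_c)\bigr)=1$. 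Without this identity the two branches could meet with a corner, and then both the $C^1$ claim in item (i) and the convexity-across-zero part of item (iv) (which needs $I_-'(0^-)\le I_+'(0^+)$) remain unproved.

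Two further points. For item (v) you derive $I(v)=0$ from ``the LDP lower bound on any neighbourhood of $v$''. In the paper's logical order this is circular: the LDP for $Z_t/t$ is proved only afterwards (Section \ref{mec_yek3}) and its proof uses the qualitative properties of $I$ you are in the middle of establishing. The non-circular argument is direct: by Proposition \ref{J_study}--(iv), if $v>0$ then $\theta_c^+=1/v$ and $J_+(\theta_c^+)=0$, whence $I(v)=vJ_+(1/v)=0$ (similarly for $v<0$, and $I(0)=\lambda_c=0$ when $v=0$). Finally, in item (iii) your formula $I'(\theta)=J_+(1/\theta)-\theta^{-1}J_+'(1/\theta)$ is correct, but the conclusion does not follow merely from $J_+'(u)\to-\infty$ as $u\searrow\alpha_+$: when $\alpha_+=0$ the factor $\theta^{-1}=u\to0$ makes the product indeterminate. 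The clean way is to simplify first, using $J_+'(u)=\tilde{\lambda}_+(u)$ and $J_+(u)=u\tilde{\lambda}_+(u)-\log\varphi_+(\tilde{\lambda}_+(u))$, which gives $I'(\theta)=-\log\varphi_+\bigl(\tilde{\lambda}_+(1/\theta)\bigr)\to+\infty$, since $\tilde{\lambda}_+(1/\theta)\to-\infty$ and $\varphi_+(\lambda)\to0$ as $\lambda\to-\infty$.
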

We conclude with a result on the presence of a  Gallavotti--Cohen type symmetry in the rate function $I$:

\begin{TheoremA} \label{GC}
%There exists a constant $c \in \bbR$ such that $I( \th)-I(-\th)=c \th $ if and only if $w_i, \t_i$ are independent.
The following  facts are equivalent:
\begin{itemize}
\item[(i)] There exists a constant $c \in \bbR$ such that the Gallavotti--Cohen type symmetry
\begin{equation} \label{casa1}
I( \th)=I(-\th)+c \th
\end{equation}
holds for all $\th \in \bbR$;
\item[(ii)] fixed $i$, the random variables $w_i, \t_i$ are independent; 
\item[(iii)] the functions $\f_+(\l)$ and $\f_-(\l)$ are proportional where finite, that is: 
\[\f_+(\l) = C \f_-(\l) \quad \mbox{ for all }  \l \leq \l_c \, . \]
\end{itemize}
Moreover, if we let $ p := \bbP (w_i =1)$ and $q := \bbP(w_i = -1)$ (with $p,q>0$  by Assumption (A2)), then $C = p/q$ and $c=\log(q/p)=-\log C$.
%\item[(iv)] the random variable $\t_1$ under $\bbP( \cdot\,|\, =1)$
\end{TheoremA}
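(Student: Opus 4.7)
The plan is to prove the chain $(ii) \Leftrightarrow (iii) \Leftrightarrow (i)$, identifying $C = p/q$ and $c = -\log C$ along the way. The three main tools are the explicit formula \eqref{cena} for $\varphi_\pm$, uniqueness of Laplace transforms, and Legendre--Fenchel duality between $\log\varphi_\pm$ and the rate functions $J_\pm$ from \eqref{pocoyo1}.

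To prove $(ii) \Leftrightarrow (iii)$, I would use \eqref{cena} to rewrite $\varphi_+(\lambda)/\varphi_-(\lambda) = f_+(\lambda)/f_-(\lambda)$ on the interval $\lambda \le \lambda_c$ (nontrivial, since $\lambda_c \ge 0$). Thus (iii) with constant $C$ is equivalent to
\[
\int_{[0,\infty)} e^{\lambda\tau}\,\mu_+(d\tau) \;=\; C\int_{[0,\infty)} e^{\lambda\tau}\,\mu_-(d\tau), \qquad \lambda \le \lambda_c,
\]
with $\mu_\pm$ the measures introduced in Proposition~\ref{sirius_black}(ii). Uniqueness of the Laplace transform then forces $\mu_+ = C\mu_-$ as measures on $[0,\infty)$. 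Comparing total masses at $\lambda=0$ yields $p = Cq$, hence $C=p/q$, and the identity $\mu_+/p = \mu_-/q$ is precisely the statement that the conditional law of $\tau_1$ given $w_1 = \pm 1$ does not depend on the sign, i.e.\ $w_1$ and $\tau_1$ are independent.

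The direction $(iii) \Rightarrow (i)$ is then essentially bookkeeping: writing $\log\varphi_+ = \log\varphi_- + \log C$ and taking Legendre transforms via \eqref{pocoyo1} gives $J_+ = J_- - \log C$ on all of $\bbR$; plugging this into \eqref{rate_f100} and separately considering $\theta>0$ and $\theta<0$ produces $I(\theta) = I(-\theta) - \theta\log C$, which is \eqref{casa1} with $c = -\log C = \log(q/p)$.

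The most delicate step is $(i) \Rightarrow (iii)$. Substituting \eqref{rate_f100} into $I(\theta) - I(-\theta) = c\theta$ and treating the cases $\theta>0$ and $\theta<0$ separately, both reduce to $J_+(u) - J_-(u) = c$ for every $u > 0$. Since $\tau_1 > 0$ a.s., one has $T_{\pm 1} > 0$ a.s., so $\varphi_\pm(\lambda) \to 0$ as $\lambda \to -\infty$; this forces $J_\pm \equiv +\infty$ on $(-\infty,0]$, so the relation $J_+ = J_- + c$ extends (trivially) to all of $\bbR$. Finally, $\log\varphi_\pm$ is proper, convex, and lower semicontinuous on $\bbR$ (continuous on $(-\infty,\lambda_c]$ thanks to \eqref{cena}, and $+\infty$ beyond), so the Fenchel--Moreau theorem gives $(J_\pm)^* = \log\varphi_\pm$; taking the Legendre transform of $J_+ = J_- + c$ yields $\log\varphi_+ = \log\varphi_- - c$, which is (iii) with $C = e^{-c}$, consistent with the identification $C=p/q$ from the first implication. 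The step requiring most care is this Fenchel--Moreau inversion, since it relies on both lower semicontinuity of $\log\varphi_\pm$ at $\lambda_c$ and the identification of the effective domain of $J_\pm$ inside the positive half--line; both follow cleanly from $\tau_1>0$ a.s.\ and the explicit formula \eqref{cena}.
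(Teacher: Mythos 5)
Your proposal is correct and follows essentially the same route as the paper: reduce (i) to the identity $J_+(u)=J_-(u)+c$ for $u>0$, extend it to all of $\bbR$, and invert via Fenchel--Moreau to get (iii), while (ii)$\Leftrightarrow$(iii) rests on the consequence $\varphi_+/\varphi_-=f_+/f_-$ of \eqref{cena}. The only local differences are that you dispose of the point $u=0$ by observing directly that $\varphi_\pm(\lambda)\to 0$ as $\lambda\to-\infty$ forces $J_\pm\equiv+\infty$ on $(-\infty,0]$ --- simpler than the paper's case analysis on whether $\alpha_+,\alpha_-$ are both zero or both positive --- and that you deduce independence from Laplace-transform uniqueness ($\mu_+=C\mu_-$, hence equal conditional laws of $\tau_1$ given $w_1=\pm1$) rather than the paper's explicit factorization of $\bbE\bigl(e^{\lambda\tau_i+\gamma w_i}\bigr)$; both variants are valid.
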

The proof of this result is given in Section \ref{secGC}.

%%%%%%%%%%%%%%%%%%%%%%%%%%%%%%%%%%%%%%%%%

\section{Proof of Theorem \ref{teo2}--(i)  and Theorem \ref{acqua} }\label{montagna}
In this section we prove Item (i) of Theorem \ref{teo2} and we study the behavior of the functions $I,J_\pm$ defined in Theorem \ref{teo2}. In particular, we prove Theorem \ref{acqua} at the end of this section.

%The proof relies on the hitting time method (cf. \cite{DGZ}, \cite{dH}[Ch. VII]).  It is divided in three parts: proof of Item (i) with exception of the possible goodness of $J_\pm$ (Section \ref{mec_yek1}), qualitative study of the functions $J_\pm, I$ (Section \ref{ospedale}) and finally the proof of Item (ii) (Section \ref{mec_yek3}). 

\subsection{Proof of Item (i) in Theorem \ref{teo2}}\label{mec_yek1}
For $n \geq 1$ the random variable  $T_n$ has the same law of $\sum _{k=1}^n \hat \t_n$, where $\hat \t_n $'s are i.i.d. random variables taking value in $(0,+\infty]$, distributed as  $T_1$.
%We can represent $T_n$ as    $T_n = \sum _{k=1}^n \hat \t_n$, where 
% one has that $\hat \t_n$ represents the time required for the process $Z$ to reach the site $n$ after
 % its first visit to the site $n-1$. In particular, the %  and Cramer's Theorem gives a LDP for $T_n/n$ as $n \to \infty$. 
 The thesis can then be obtained from 
 Cram\'er Theorem. 
We give the proof in the case $n\to \infty$. Call $\alpha:= \bbP (T_1<\infty)$ and note that $P(T_n <\infty)= \alpha^n$. 
 Then  for each subset $\cA \subset \bbR$  we can write  $  \bbP( T_n /n \in \cA)=\alpha ^n  \bbP(T_n/n \in \cA \,| \,T_n<\infty)$.
 Now we observe that, conditioning on the event $T_n <\infty$, $T_n $ can be represented as $ \sum _{k=1}^n \hat \t_n'$, where the real  random variables  $\t_n'$ are i.i.d. and distributed as $T_1$ conditioned to be finite. In conclusion
 $\bbP(T_n/n \in \cA | T_n<\infty) = P( \frac{1}{n} \sum_{k=1}^n \hat \t_n' \in \cA)$. At this point one only need to apply 
 Cram\'er Theorem for i.i.d. real random variables (cf. \cite{DZ}[Th. 2.2.3]) observing that the moment generating function of $\hat \t_n'$ is $\varphi_+/\a$. 
The fact that $J_\pm$ is good if and only if 
 $\bbP (T_1 < \infty ) \neq \bbP (T_{-1} < \infty )$ is proved in the next Subsection (see Remark \ref{Jgood} below).

\subsection{Qualitative study of the functions $J_\pm (\th ) $, $I(\th )$}\label{ospedale} 
In this subsection we  first   prove some properties of the   function $I(\th)$ defined in Theorem \ref{teo2} by \eqref{rate_f100} and the identity $I(0) = \lim _{\vartheta \to 0} I(\vartheta)$. In the next subsection we will indeed prove that $I(\vartheta)$ is the rate function of the LDP for $Z_t/t$.

\smallskip

We start by  proving Proposition \ref{sirius_black}.

\begin{proof}[Proof of Proposition \ref{sirius_black}]
Let us prove Point (i). Recall the definition of the positive functions  $f_\pm$ given in \eqref{cioccolato}.
Distinguishing on the value of $w_1$ we can write
 \begin{equation}\label{rinnovo99}
T_1=   \mathds{1}( w_1=1) \t_1 +  \mathds{1}(w_1=-1)(\t_1+ T_1'+T_1'')%= \t_1+  \mathds{1}(w_1=-1)( T_1'+T_1'')
\end{equation}
where $T_1', T_1''$ are independent random variables, independent from $w_1, \t_1$ and distributed as $T_1$  (roughly, $T_1'$ is the time  for $Z$ to go from $-1$ to $0$ and $T_1''$ is the time for $Z$ to go from $0$ to $1$).
The above identity implies that
\begin{equation}\label{base} 
\varphi_+(\l) = f_+(\l) + f_-(\l) \varphi_+(\l)^2 \,.
\end{equation}
From this we deduce that $\varphi_+(\l) < +\infty$ if and only if $f_+(\l) f_-(\l) \leq 1/4$, and moreover in this case \eqref{cena} holds. By symmetry, one obtains that the same condition implies $\varphi_-(\l) <+\infty$. Trivially $f_\pm$ is increasing, 
 $\lim _{\l \to -\infty} f_\pm(\l)=0$ and $\lim_{\l \to +\infty} f_\pm (\l) =+\infty$.  Moreover, $f_\pm (\l)$ is smooth and strictly  increasing on the open set $\{ f_\pm < +\infty\}$.
 % (see the proof of Lemma 2.2.5--(c) in \cite{DZ}). 
 As a consequence there exists a unique value $\l_c\in \bbR$ such that $f_+(\l) f_-(\l) = 1/4$ and therefore $f_+(\l) f_-(\l) \leq 1/4$ for $\l \leq \l_c$.  Since trivially $\varphi_+(\l) < +\infty$ for $\l \leq 0$ it must be $\l_c \geq 0$. This completes the proof of Point (i).

We now move to Point (ii).
To see that $\bbP ( \t_1 < \a_+ , w_1 = 1 ) = 0$ observe that by \eqref{rinnovo99} $\{ w_1 = 1 \} \subseteq \{ T_1 = \t_1 \}$ and therefore 
$ \bbP ( \t_1 < \a_+ , w_1 = 1 ) = \bbP ( T_1 < \a_+ , w_1 = 1 ) \leq \bbP ( T_1 < \a_+ ) =0$. 
To get the thesis it remains to show that $\forall \e > 0$ $\bbP ( \t_1 \in [ \a_+ , \a_+ + \e ) , w_1 = 1 ) >0$.
Assume the contrary. Then there exists $\hat{\e} >0 $ such that $\bbP ( \t_1 \in [ \a_+ , \a_+ +\hat{ \e} ) , w_1 = 1 ) =0$.
By definition of $\a_+$, on the other hand, we have $\bbP ( T_1 \in [ \a_+ , \a_+ + \hat{\e} )  ) 
%= \bbP ( T_1 <\a_+ + \hat{\e}   ) 
> 0$. Combining this with the decomposition in \eqref{rinnovo99},  we find
	\[ \begin{split}
	0 < \bbP ( T_1 <  \a_+ + \hat{\e} ) &
	= \bbP ( \t_1 < \a_+ + \hat{\e} , w_1 = 1 ) + \bbP ( \t_1 + T_1 ' + T_1 '' < \a_+ + \hat{\e} , w_1 = -1 ) \\
	& \leq \bbP ( \t_1 < \a_+ + \hat{\e} , w_1 = -1 ) \bbP ( T_1 < \a_+ + \hat{\e} )^2 \\ &
	\leq 
	\bbP ( w_1 = -1 ) \bbP ( T_1 < \a_+ + \hat{\e} )^2 \, . 
	\end{split} \]
Dividing both sides by the positive quantity  $\bbP (T_1 < \a_+ + \hat{\e} )$ and recalling that  by (A2) $\bbP ( w_1 = -1 ) <1$,  we get the contradiction and this concludes the proof.
\end{proof}

We now focus on the behavior $\log \varphi_\pm$. Recall the definition of $\l_c, \a_\pm$ given in Proposition \ref{sirius_black}. 
%Below we list some facts about the functions $ \log \varphi_\pm (\lambda)$:
\begin{Lemma} \label{silente} The following holds:
\begin{itemize}
%\item[(i)] there exists a unique critical value $0 \leq \lambda_c < \infty$ such that $\varphi_\pm (\lambda)<\infty$ for all $\lambda \leq \lambda_c$, $\varphi_\pm (\lambda) = +\infty$ for all  $\lambda > \lambda_c $;
\item[(i)] $\log \varphi_\pm$ is strictly increasing and continuous on $(-\infty , \lambda_c ]$, convex and smooth on $(-\infty , \lambda_{c} )$ and moreover  $\lim_{\lambda \to  -\infty} \log \varphi_\pm (\lambda) = -\infty $; 
\item[(ii)] the second derivative  $(\log \varphi_\pm )''$ is strictly positive on $(-\infty , \l_c)$ (in particular $(\log \varphi_+ (\lambda))'$ is strictly increasing on $(-\infty , \l_c)$ )
 and	\begin{align}
	  & \lim_{\lambda \to  -\infty} (\log \varphi_\pm (\lambda))' = \a _\pm \,, \label{derivo1}\\
	  &  \lim_{ \lambda \nearrow \lambda_c }  (\log \varphi_\pm (\lambda))' = +\infty  \,.\label{derivo2}
	\end{align}
%where we recall that $\a_+$, $\a_-$  have been defined as the infimum of the support of $T_1 $, $T_{-1}$ respectively.
\end{itemize}
\end{Lemma}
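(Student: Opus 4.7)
The plan is to identify $\varphi_\pm$ as the moment generating function of the finite sub-probability measure $\nu_\pm$ on $[0,+\infty)$ defined by $\nu_\pm(A):=\bbP(T_{\pm 1}\in A,\,T_{\pm 1}<\infty)$, and to combine standard log--MGF theory with the explicit formula \eqref{cena} from Proposition \ref{sirius_black}(i) in order to extend the analysis up to $\lambda_c$. By general MGF theory, on any open interval where $\varphi_\pm<\infty$ the function $\log\varphi_\pm$ is real--analytic and convex, and it is \emph{strictly} increasing because $T_{\pm 1}>0$ almost surely on $\{T_{\pm 1}<\infty\}$ and hence $\varphi'_\pm>0$. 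Since Proposition \ref{sirius_black}(i) ensures $\varphi_\pm<\infty$ on $(-\infty,\lambda_c]$, one immediately obtains convexity and smoothness on $(-\infty,\lambda_c)$. Left--continuity at $\lambda_c$ follows directly from \eqref{cena}: $\varphi_\pm(\lambda_c)=1/(2f_\mp(\lambda_c))$ is finite and positive, $f_\pm$ are left--continuous on $(-\infty,\lambda_c]$ by monotone convergence, and $\sqrt{1-4f_-f_+}\to 0$ as $\lambda\nearrow\lambda_c$.

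Strict convexity of $\log\varphi_\pm$ on $(-\infty,\lambda_c)$ is equivalent to $\nu_\pm$ not being a point mass, which I would establish by contradiction using the recursive decomposition \eqref{rinnovo99}. If $T_1$ were almost surely equal to some $c>0$ on $\{T_1<\infty\}$, then on the positive--probability event $\{w_1=+1\}$ one would have $\tau_1=c$, while on the positive--probability event $\{w_1=-1\}\cap\{T_1'<\infty\}\cap\{T_1''<\infty\}$ one would have $\tau_1+T_1'+T_1''=c$ with $T_1'=T_1''=c$, forcing $\tau_1=-c<0$ and contradicting $\tau_1>0$; a symmetric argument handles $T_{-1}$. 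Consequently $(\log\varphi_\pm)''>0$ on $(-\infty,\lambda_c)$.

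For the limits in item (ii), the standard identity $(\log\varphi_\pm)'(\lambda)=\bbE[T_{\pm 1}e^{\lambda T_{\pm 1}}\mathds{1}(T_{\pm 1}<\infty)]/\varphi_\pm(\lambda)$ expresses the derivative as the mean of the exponentially tilted probability measure $\nu_\pm^\lambda(dt):=e^{\lambda t}\nu_\pm(dt)/\varphi_\pm(\lambda)$. As $\lambda\to-\infty$ this tilted measure concentrates weakly on the infimum of the support of $\nu_\pm$, which by Proposition \ref{sirius_black}(ii) is exactly $\alpha_\pm$, yielding \eqref{derivo1}; the limit $\log\varphi_\pm(\lambda)\to-\infty$ follows in passing (or directly from dominated convergence). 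For \eqref{derivo2}, I would differentiate \eqref{cena}: setting $u(\lambda):=\sqrt{1-4f_-(\lambda)f_+(\lambda)}$, one has $u'(\lambda)=-2(f_-f_+)'(\lambda)/u(\lambda)$, and since $(f_-f_+)'(\lambda)$ is bounded below by a strictly positive constant on a left neighborhood of $\lambda_c$ (both $f_\pm$ are positive, strictly increasing and smooth there) while $u(\lambda)\to 0^+$, we obtain $u'(\lambda)\to-\infty$. Consequently $\varphi'_\pm(\lambda)\to+\infty$, and since $\varphi_\pm$ stays bounded and positive we conclude $(\log\varphi_\pm)'(\lambda)\to+\infty$.

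The main obstacle I expect is the strict--convexity step, which requires combining the recursion \eqref{rinnovo99} carefully with assumption (A2) and the strict positivity of $\tau_1$. Everything else reduces either to classical features of log--MGFs (smoothness, convexity, the tilted--mean formula for the derivative) or to straightforward calculus on the explicit formula \eqref{cena}.
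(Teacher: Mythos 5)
Your proposal is correct and follows essentially the same route as the paper: $\varphi_\pm$ is treated as the (defective) moment generating function of $T_{\pm1}$, the second derivative is the variance of $T_{\pm1}$ under the exponentially tilted law, \eqref{derivo1} comes from concentration of the tilted law at the essential infimum $\a_\pm$, and \eqref{derivo2} comes from differentiating the renewal identity of Proposition \ref{sirius_black}. Two steps are thinner than they should be, and they are precisely where the paper spends its effort. First, for \eqref{derivo1}, weak concentration of $\nu_\pm^\lambda$ at $\a_\pm$ does not by itself give convergence of the \emph{mean}, since the integrand $t$ is unbounded; one must also check that $\bbE_{\nu_\pm^\lambda}\big[T\,\mathds{1}(T\geq c)\big]\to 0$ for each $c>\a_\pm$, which is exactly the content of the paper's estimates on $e_1,\dots,e_4$ (built on the elementary bound $\sup_{x\geq c}xe^{\l x}=ce^{\l c}$ for $\l<-1/c$). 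Second, in differentiating \eqref{cena} you treat the term coming from $f_-'$ as harmless, but finiteness of $f_-(\l_c)$ does not force $f_-'(\l)$ to stay bounded as $\l\nearrow\l_c$, so your expression for $\varphi_+'$ is a priori of the form $+\infty-\infty$; the cancellation resolves correctly because the diverging part of $-u'/(2f_-)$ contains the compensating contribution $f_-'f_+/(uf_-)$ with $f_+/(uf_-)\to+\infty$, but this must be said. The paper sidesteps the issue by differentiating the implicit quadratic \eqref{base}, whose right-hand side is a sum of nonnegative terms so no cancellation can occur. On the plus side, your contradiction argument for strict convexity via \eqref{rinnovo99} and (A2) is sound and in fact more explicit than the paper's one-line appeal to the non-degeneracy of $T_1$ under the tilted measure.
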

%\begin{Claim} 
%It holds:
%	\[ \lim_{ \lambda \nearrow \lambda_c }  (\log \varphi_\pm (\lambda))' = +\infty \, . \]
%\end{Claim}
%\club Note that, due to equation \eqref{derivo2} in Point (iv),  $J_\pm$ does not present any linear region.
\begin{proof} 
The proof of Point (i) is rather standard (see Lemma 2.2.5 in \cite{DZ}, the fact that $\log \varphi_\pm$ is strictly increasing on $(-\infty , \lambda_c ]$ and convex  follows also  from Point (ii)).

\medskip

We  prove  Point (ii) restricting to $\varphi_+$ without loss of generality.  Note that, for $\l<\l_c$,
$$  (\log \varphi_+(\lambda))''= \frac{\bbE\left( T_1^2 e^{\l T_1} \mathds{1}(T_1<\infty)\right) }{
\bbE\left( e^{\l T_1} \mathds{1}(T_1<\infty)\right)
}-   \frac{\bbE\left( T_1 e^{\l T_1} \mathds{1}(T_1<\infty)\right)^2 }{
\bbE\left( e^{\l T_1} \mathds{1}(T_1<\infty)\right)^2
}
=
 {\rm Var}_\bbQ( T_1)\,,$$ where  $\bbQ$ it the probability defined as
  $\bbQ(A)=  \bbE\left(\mathds{1}(A) e^{\l T_1} \mathds{1}(T_1<\infty)\right)/  \bbE\left(e^{\l T_1} \mathds{1}(T_1<\infty)\right)$.
Since $T_1$ is non constant $\bbQ$--a.s. by Assumption (A1), we conclude that $ (\log \varphi_+ (\lambda))''>0$ for $\l < \l_c$, hence $(\log \varphi_+ (\lambda))'$ on $(-\infty , \l_c)$ is strictly increasing.

 \medskip 
 
We first  derive \eqref{derivo1} in the case $\a_+=0$.  It is convenient to prove the thesis for a generic nonnegative random variable $T_1$, non necessarily defined as  in Theorem \ref{teo2}.
 Suppose first that   $\bbP (T_1 = 0) >0$.  Since  $\varphi_+(\l) \geq \bbP(T_1=0)$, while
$\lim _{\l \to -\infty} \varphi'(\l)= \lim _{\l\to -\infty} \bbE ( T_1 e^{\lambda T_1} \mathds{1}(T_1 <\infty ))
=0$ by the monotone convergence theorem, we get \eqref{derivo1}.

We now consider the case $\bbP(T_1=0)=0$, thus implying $\bbP ( T_1 \in (0, \varepsilon )) >0$ for all $\varepsilon > 0$. We fix any $c>0$ and take $\lambda < -1/c$. By  this choice it holds  $\sup_{x \geq c} x e^{\lambda x} = c e^{\lambda c} $. Moreover we fix $c_1 , c_2 : 0 < c_1 < c_2 < c$ such that 
$\bbP ( c_1 \leq T_1 \leq c_2 ) >0$.  
Define:
	\begin{align*}
	& e_1(\lambda ) := \bbE ( T_1 e^{\lambda T_1} \mathds{1} (c \leq T_1 < \infty )) \leq c e^{\lambda c} \,, \\
	& e_2(\lambda ) := \bbE ( T_1 e^{\lambda T_1} \mathds{1} ( T_1 < c ))  \geq  c_1 e^{\lambda c_2 } \bbP ( c_1 \leq T_1 \leq c_2 )   \, , \\
	& e_3(\lambda ) := \bbE ( e^{\lambda T_1} \mathds{1} (c \leq T_1 < \infty )) \leq  e^{\lambda c}\,,	  \\
	& e_4(\lambda ) := \bbE ( e^{\lambda T_1} \mathds{1} ( T_1 < c )) \geq  e^{\lambda c / 2 } \bbP (  T_1 < c/2  ) > 0  \, .  	\end{align*}
By the previous bounds we have 
%Combining \eqref{elefante1} and \eqref{elefante3} we get 
	$ \lim_{\lambda \to -\infty} e_1(\lambda )/e_2(\lambda )=0$
%	Moreover, 	
%the above bounds in \eqref{elefante4} and \eqref{elefante5}  imply that 
and $ \lim_{\lambda \to -\infty}  e_3(\lambda )/e_4(\lambda ) =0$. In conclusion
	\begin{equation*}
	\begin{split}
	0  \leq \lim_{\lambda \to -\infty}  ( \log \varphi_+(\l) )'  & =  \lim_{\lambda \to -\infty} \frac{\varphi_+'(\lambda )}{\varphi_+ (\lambda )}  
	= \lim_{\lambda \to -\infty}  
	\frac{e_2(\lambda ) \big( 1 + e_1(\lambda )/e_2(\lambda ) \big) }{e_4 (\lambda ) \big( 1 + e_3(\lambda )/e_4(\lambda ) \big) } \\
&	= \lim_{\lambda \to -\infty}  \frac{e_2(\lambda )}{e_4(\lambda )} 
	=  \lim_{\lambda \to -\infty} \frac{\bbE ( T_1 e^{\lambda T_1} \mathds{1} ( T_1 < c )) }{\bbE ( e^{\lambda T_1} \mathds{1} ( T_1 < c )) }
	\leq  c \, . 
	 \end{split}
	 \end{equation*}
Since $c>0$ is arbitrary we get \eqref{derivo1}.

To complete the proof of \eqref{derivo1} it remains to discuss the case $\a_+>0$. To this aim note that $0$ is 
the minimum in 
the support of the law of $\hat T_1
:= T_1- \a_+$.  Hence, by what just proven, it holds 
$ \lim_{\lambda \to -\infty}  ( \log \hat \varphi_+(\l) )'  =0
$,
where  $\hat \varphi_+(\l):= \bbE ( e^{\lambda \hat T_1} \mathds{1}(\hat T_1 <\infty ))$. Since
$\varphi_+(\l)= e^{\l \a_+} \hat \varphi_+ (\l)$, we get \eqref{derivo1}.

\medskip

To conclude the proof of Point (ii) we need to justify \eqref{derivo2}.
%The above limit exists, since the first derivative of $\log \varphi_\pm$ is an increasing function.
Since by Point (i) $\log \varphi_+$ is smooth and convex on $(-\infty, \l_c)$, the derivative $ (\log \varphi_+ (\lambda))'= \varphi_+ '(\lambda )/\varphi_+(\lambda ) $ is increasing and therefore the limit in \eqref{derivo2} exists.
%For any $\lambda < \lambda_c$ we have:
%	\[ (\log \varphi_+ (\lambda ))' = \frac{\varphi_+ '(\lambda )}{\varphi_+(\lambda )}
%	= \frac{ \bbE \big( T_1 e^{\lambda T_{ 1}} \mathds{1}(T_{ 1} <\infty ) \big) }{\bbE \big( e^{\lambda T_{ 1}} \mathds{1}(T_{ 1} <\infty ) \big)}
%	\, . \]
Moreover, since $\ds \lim_{\lambda \nearrow \lambda_c } \varphi_+ (\lambda) = \varphi_+ (\lambda_c) < \infty$, we only need  to show that 
$\ds \lim_{\lambda \nearrow \lambda_c } \varphi_+ '(\lambda) = +\infty $.
To this aim recall \eqref{base}.
%, recall that for all $\lambda < \lambda_c$ $\varphi_+(\lambda )$ solves
%	\[ f_-(\lambda ) \varphi_+ (\lambda)^2 - \varphi_+(\lambda) + f_+(\lambda) = 0 \]
%Fsecso that, 
Differentiating such identity for  $\l < \l_c$ (note that everything is finite and smooth) we get
	\begin{equation}\label{derivative}
	\big( 1 - 2 f_-(\lambda )\varphi_+(\lambda ) \big) \varphi_+ ' (\lambda )
	=
	f_+ ' (\lambda) + f_- '(\lambda) \varphi_+ (\lambda )^2 \, .
	\end{equation}
By the monotone convergence theorem we get that $f_-(\lambda ) , \varphi_+(\l)$ and the derivative 
$ f_\pm '(\lambda ) = 
	 \bbE \big( \t_1 e^{\lambda \t_1} \mathds{1}(w_1 =\pm1 ) \big)$ converge to $f_-(\lambda_c ) , \varphi_+(\l_c)$ and 
$ f_\pm '(\lambda_c )$ respectively as $\l \nearrow \l_c$. Observing that  
\begin{equation}\label{pisolo}
\varphi_+ (\lambda_c ) = \frac{1}{2 f_-(\lambda_c)}
\end{equation} due to  \eqref{cena} and the identity $f_+(\l_c) f_-(\l_c)=1/4$, 
we get that 
	%$\ds \lim_{\lambda \nearrow \lambda_c } f_-(\lambda ) = f_-(\lambda_c ) <\infty$ and $\ds \lim_{\lambda \nearrow \lambda_c } \varphi_+ (\lambda ) = \varphi_+ (\lambda_c ) = \left(2 f_-(\lambda_c)\right)^{-1}$ (the last identity follows from \eqref{base} since $f_+(\l_c) f_-(\l_c)=1/4$). In particular, 
	$1 - 2 f_-(\lambda )\varphi_+(\lambda ) $ converges to zero as $\l \nearrow \l_c$.
  On the other hand,  as  $\lambda \nearrow \lambda_c$ the 
  r.h.s. of \eqref{derivative} converges to its value at $\l_c$, which is nonzero.
% in  \eqref{derivative}, we have
%	\[ \underbrace{\big( 1 - 2 f_-(\lambda )\varphi_+(\lambda ) \big)}_{\searrow 0}  \varphi_+ ' (\lambda ) 
%	= \underbrace{ f_+ ' (\lambda) + f_- '(\lambda) \varphi_+ (\lambda )^2 }_{\rightarrow C >0} \]
%which implies
%	\[ \lim_{\lambda \nearrow \lambda_c } \varphi_+ '(\lambda ) = +\infty \, . \]
The limit \eqref{derivo2} is therefore the only possibility as $\l \nearrow \l_c$ in \eqref{derivative}.
\end{proof}

Recall the  definition of the asymptotic velocity $v$ given  in \eqref{trenino}.

\begin{Remark}\label{allergia}  
Taking the expectation in \eqref{rinnovo99} and in the analogous expression for $T_{-1}$,  one concludes that   $\bbE (T_{\pm 1})<+\infty$ implies that  $ \bbE (T_{\pm 1})\, \bbE(w_1)= \pm\bbE(\t_1)$. Since $\bbE(\t_1)\not =0$, we conclude that   if $ \bbE (T_{\pm 1}) < \infty$ then $\bbE(w_1)\not =0$ and  $\bbE(T_{\pm1})= \pm \bbE(\t_1)/ \bbE(w_1)= \pm 1/v$.
\end{Remark}

From Lemma \ref{silente} we deduce the qualitative behavior of the rate  function  $J_\pm (\th ): = \sup_{\lambda \in \bbR } \{ \lambda \th - \log \varphi_\pm (\lambda ) \} $:
\begin{Proposition} \label{J_study} The following holds:
\begin{itemize}
\item[(i)] $J_\pm $ is lower semicontinuous, convex and takes values in $[0 , +\infty ]$. 
\item[(ii)] $J_\pm$ is finite   on $(\a_\pm , +\infty )$ and infinite on $(-\infty , \a_\pm )$. 
\item[(iii)]  $J_\pm$ is  smooth on $(\a_\pm , +\infty )$  and 
the derivative $J'_\pm$ satisfies $\ds \lim_{\th \to +\infty } J_\pm ' (\th ) = \lambda_c$. In particular, $\ds\lim _{\th \to +\infty} J_\pm (\th)=+\infty$ if $\l_c >0$.
\item[(iv)] If $\lambda_c = 0$ then $J_\pm$ is   strictly decreasing on $ (\a_\pm , +\infty )$. 
If $\lambda_c >0$ then there exist $\th_c^\pm  \in (\a_\pm , +\infty )$ such that $J_\pm$ is  strictly decreasing on $(\a_\pm , \th_c^\pm )$, strictly increasing on $(\th_c^\pm , +\infty )$. Moreover:
	\begin{itemize}
	\item[$\bullet$] $v=0$ if and only if $\lambda_c = 0$\,,
	\item[$\bullet$] if $v>0$ then $\th_c^+ = 1/v$ and $J_+ ( \th_c^+ ) =0$\,,
	\item[$\bullet$] if $v<0$ then $\th_c^- = -1/v$ and $J_- ( \th_c^- ) =0$\,.
	\end{itemize}

\item[(v)] The value $J_\pm(\a_\pm)$ admits the following characterization
	\begin{equation} \label{alpha}
	%\begin{cases}
	J_\pm  (\a_\pm  ) = \ds \lim_{ \th \searrow \a_\pm  } J_\pm  (\th ) = 
	\begin{cases}
	+ \infty \, \mbox{ if } \bbP (T_{\pm 1 }= \a_\pm  ) =0 \,,\\
	< \infty \, \mbox{ otherwise}\,.
	\end{cases}
	%\\
	%J_- (\a_- ) = \ds \lim_{ \th \searrow \a_- } J_- (\th ) = 
	%\begin{cases}
	%+ \infty \, \mbox{ if } \bbP (T_{-1} = \a_- ) =0\,,\\
	%< \infty \, \mbox{ otherwise}\,.
	%\end{cases}
	%\end{cases} 
	\end{equation}
\end{itemize}
\end{Proposition}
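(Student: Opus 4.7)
The proposition is a Legendre-transform analysis of $J_\pm$ given the full qualitative picture of $\log\varphi_\pm$ supplied by Lemma \ref{silente}: a strictly convex, smooth function on $(-\infty,\lambda_c)$ whose derivative $(\log\varphi_\pm)'$ maps $(-\infty,\lambda_c)$ strictly monotonically onto $(\alpha_\pm,+\infty)$, and which is $+\infty$ on $(\lambda_c,+\infty)$. My plan is to read items (i)--(iv) off this picture via standard convex duality, then treat the boundary value in (v) by a separate monotone/dominated convergence argument.

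For items (i)--(iii): $J_\pm$ is a supremum of affine functions, hence convex and lsc, and plugging $\lambda=0$ together with $\varphi_\pm(0)=\mathbb{P}(T_{\pm 1}<\infty)\le 1$ gives $J_\pm\ge 0$. For $\theta>\alpha_\pm$, the strict monotonicity of $(\log\varphi_\pm)'$ yields a unique $\lambda^\ast(\theta)\in(-\infty,\lambda_c)$ with $(\log\varphi_\pm)'(\lambda^\ast(\theta))=\theta$, at which the sup is attained and finite. For $\theta<\alpha_\pm$, writing $\lambda\theta-\log\varphi_\pm(\lambda)=\lambda(\theta-\alpha_\pm)-[\log\varphi_\pm(\lambda)-\alpha_\pm\lambda]$ and using $(\log\varphi_\pm)'(\lambda)\searrow\alpha_\pm$ at $-\infty$, the first term dominates and tends to $+\infty$, proving $J_\pm(\theta)=+\infty$. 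Smoothness on $(\alpha_\pm,+\infty)$ is the implicit function theorem applied to $(\log\varphi_\pm)'$, and the envelope identity gives $J_\pm'(\theta)=\lambda^\ast(\theta)$; hence $J_\pm'(\theta)\to\lambda_c$ as $\theta\to+\infty$, and if $\lambda_c>0$ then $J_\pm$ is eventually increasing with slope bounded below by $\lambda_c/2$, so it diverges.

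For item (iv): the sign of $J_\pm'(\theta)=\lambda^\ast(\theta)$ is controlled by whether $0\in(-\infty,\lambda_c)$. If $\lambda_c=0$ then $\lambda^\ast<0$ throughout, giving strict decrease. If $\lambda_c>0$, set $\theta_c^\pm:=(\log\varphi_\pm)'(0)$; then $J_\pm$ strictly decreases on $(\alpha_\pm,\theta_c^\pm)$ and strictly increases on $(\theta_c^\pm,+\infty)$. The equivalence $v=0\Leftrightarrow\lambda_c=0$ follows from Proposition \ref{sirius_black}(i): $f_+(0)f_-(0)=pq=1/4$ iff $p=q=1/2$ iff $\mathbb{E}(w_1)=0$. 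When $v>0$ the LLN of Proposition \ref{teo1} forces $\mathbb{P}(T_1<\infty)=1$, so $\varphi_+(0)=1$; moreover $\lambda_c>0$ gives $\mathbb{E}(T_1)<\infty$, so Remark \ref{allergia} yields $\mathbb{E}(T_1)=1/v$, hence $\theta_c^+=\varphi_+'(0)/\varphi_+(0)=1/v$ and $J_+(\theta_c^+)=-\log\varphi_+(0)=0$. The case $v<0$ is symmetric.

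For item (v), which I expect to be the most delicate: write
\[
J_\pm(\alpha_\pm)=-\inf_{\lambda\le \lambda_c}g(\lambda),\qquad g(\lambda):=\log\varphi_\pm(\lambda)-\alpha_\pm\lambda.
\]
By Lemma \ref{silente}(ii) we have $g'(\lambda)=(\log\varphi_\pm)'(\lambda)-\alpha_\pm>0$ on $(-\infty,\lambda_c)$, so $g$ is strictly increasing and $\inf g=\lim_{\lambda\to-\infty}g(\lambda)$. Rewriting
\[
g(\lambda)=\log\mathbb{E}\bigl(e^{\lambda(T_{\pm 1}-\alpha_\pm)}\mathbf{1}(T_{\pm 1}<\infty)\bigr),
\]
the integrand is dominated by $1$ and converges pointwise (as $\lambda\to-\infty$) to $\mathbf{1}(T_{\pm 1}=\alpha_\pm)$ since $T_{\pm 1}\ge\alpha_\pm$ on its support. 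Dominated convergence gives $\lim_{\lambda\to-\infty}g(\lambda)=\log\mathbb{P}(T_{\pm 1}=\alpha_\pm)$, proving the dichotomy in \eqref{alpha}. Finally, the identification with $\lim_{\theta\searrow\alpha_\pm}J_\pm(\theta)$: when $\mathbb{P}(T_{\pm 1}=\alpha_\pm)=0$ this follows from lower semicontinuity ($J_\pm(\alpha_\pm)=+\infty$ forces the right-limit to be $+\infty$); when $\mathbb{P}(T_{\pm 1}=\alpha_\pm)>0$, so that $J_\pm(\alpha_\pm)<\infty$, apply the standard chord-inequality for convex functions finite at $\alpha_\pm$ to get $\limsup_{\theta\searrow\alpha_\pm}J_\pm(\theta)\le J_\pm(\alpha_\pm)$, combined with lsc to obtain equality. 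The main obstacle is step (v), since it is where one has to pass from the Legendre dual back to a probabilistic statement about the atom at $\alpha_\pm$; the remaining items are routine once Lemma \ref{silente} is in hand.
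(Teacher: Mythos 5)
Your proposal is correct. For items (i)--(iv) you follow essentially the same route as the paper: everything is read off Lemma \ref{silente} via the Legendre duality, with the unique stationary point $\lambda^\ast(\theta)=\tilde\lambda_\pm(\theta)$ solving $(\log\varphi_\pm)'(\lambda)=\theta$, the envelope identity $J_\pm'=\lambda^\ast$, the identification $\theta_c^\pm=(\log\varphi_\pm)'(0)$, and the chain $v=0\Leftrightarrow pq=1/4\Leftrightarrow f_+(0)f_-(0)=1/4\Leftrightarrow\lambda_c=0$; your verification that $J_\pm=+\infty$ on $(-\infty,\alpha_\pm)$ via the split $\lambda\theta-\log\varphi_\pm(\lambda)=\lambda(\theta-\alpha_\pm)-g(\lambda)$ with $g$ increasing is a mild repackaging of the paper's ratio argument and is equally valid. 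Where you genuinely diverge is item (v). The paper treats the two cases separately: when $\bbP(T_{\pm1}=\alpha_\pm)=0$ it bounds $\varphi_\pm(\lambda)$ by splitting at $\alpha_\pm+\delta$ and sends $\lambda\to-\infty$ then $\delta\to0$; when the atom is present it invokes the already-proved LDP for $T_n/n$ together with Proposition \ref{sirius_black}--(ii) to lower-bound $-J_\pm(\alpha_\pm)$ by $\log\bbP(\tau_1=\alpha_\pm,w_1=\pm1)$. You instead observe that $g(\lambda)=\log\varphi_\pm(\lambda)-\alpha_\pm\lambda$ is increasing, so $J_\pm(\alpha_\pm)=-\lim_{\lambda\to-\infty}g(\lambda)$, and compute that limit in one stroke by dominated convergence as $-\log\bbP(T_{\pm1}=\alpha_\pm)$ (the integrand $e^{\lambda(T_{\pm1}-\alpha_\pm)}\mathds{1}(T_{\pm1}<\infty)$ is bounded by $1$ for $\lambda\le0$ and converges pointwise to $\mathds{1}(T_{\pm1}=\alpha_\pm)$). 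This is cleaner and more self-contained: it handles both cases simultaneously, yields the explicit value $J_\pm(\alpha_\pm)=-\log\bbP(T_{\pm1}=\alpha_\pm)$, and avoids any appeal to the LDP for the hitting times or to the renewal identity for the atom; the price is nothing, since the right-continuity at $\alpha_\pm$ is then recovered exactly as in the paper (lower semicontinuity in one direction, the convex chord inequality in the other).
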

\begin{Remark} \label{Jgood} Due to the above result,  
 $J_\pm$ is a  good rate function (i.e.  $\{J_\pm \leq u \}$ is compact for all $u \in \bbR$) if  and only if  $\lambda_c >0$.
\end{Remark}

\begin{proof} Without loss of generality 
we prove the above statements only  for $J_+$.

The proof of Point (i) is standard (cf. \cite{DZ}[Ch.2]) and we omit it.
 We now prove Point (ii). The fact that $J_+(\th ) = \infty$ for $\th \leq 0$ follows from Lemma \ref{silente}, Item (i). We now show that if $\a_+ >0$ then $J_+(\th )=\infty$ also for $\th \in (0 , \a_+ )$. For such $\th$,
%Take $0<\th < \a_+$, then 
by \eqref{derivo1} in Lemma \ref{silente} it holds
	\[ \lim_{\lambda \to -\infty} \frac{\log \varphi_+(\lambda )}{\lambda \th} = 
	\lim_{\lambda \to -\infty} \frac{(\log \varphi_+)'(\lambda )}{\th} = \frac{\a_+}{\th} >1 \]
and therefore
	\[J_+(\th ) \geq \lim_{\lambda \to -\infty} \lambda \th \bigg( 1 - \frac{\log \varphi_+(\lambda )}{\lambda \th} \bigg) = +\infty \, . \]
Take now  $\th > \a_+$. Since by Lemma \ref{silente}  $(\log \varphi_+  )'(\lambda )$ is a strictly increasing function which takes values in $(\a_+ , +\infty )$, there exists a unique $\tilde{\lambda}_+ (\th )$ such that 
	\begin{equation} \label{lambda_tilde}
	\th = (\log \varphi_+ )' (\tilde{\lambda}_+(\th ) ) \, . 
	\end{equation}
Then $J_+(\th ) = \th \tilde{\lambda}_+(\th ) - \log \varphi_+(\tilde{\lambda}_+(\th ))$ which is finite. This concludes the proof of (ii).

 \medskip

%%%%%%%%%%%%%%%%%%%%%%%%%%%%%%
We now move to Point (iii). Observe that, by 
  \eqref{lambda_tilde}  and Lemma \ref{silente},  $\tilde{\lambda}_+$ is the inverse function of $(\log \varphi_+ )'$.  By Lemma \ref{silente} $(\log \varphi_\pm)'$ is smooth on $(-\infty, \l_c)$ and
	$( \log \varphi_\pm ) ''>0$ on  $(-\infty, \l_c)$. Hence, by the implicit function theorem, the function $\tilde{\lambda}_+$ 
	 is   smooth on $(\a_+ , +\infty )$ and tending to $\l_c$ as $\th \to +\infty$ (see \eqref{derivo2}). 
Hence $J_+$ is smooth on $(\a_+ , +\infty )$ and 
	\begin{equation}\label{mattina}
	 J_+ '(\th ) = \th \tilde{\lambda}_+ '(\th ) + \tilde{\lambda}_+(\th ) - 
	(\log \varphi_+ )' (\tilde{\lambda}_+(\th ))  \tilde{\lambda}'_+(\th ) 
	= \tilde{\lambda}_+(\th )\, ,
	\end{equation}
thus implying that $ \lim_{\th \to +\infty } J_+ ' (\th ) = \lambda_c$. This  concludes the proof of Item (iii).

\medskip

%%%%%%%%%%%%%%%%%%%%%%%%%%%%%%%%
We now prove Point   (iv). By Lemma \ref{silente}  $\tilde{\l}_+$ is strictly increasing on $(\a_+,+\infty)$,  $\lim _{\th \searrow \a_+ }\tilde{\l}_+ (\th)= -\infty$ and   $\lim _{\th \to \infty}\tilde{\l}_+ (\th)= \l_c$.  If $\l_c=0$, then $\tilde{\l}_+$ must be negative and from \eqref{mattina} we conclude that 
  $J_+$ is strictly decreasing on $(\a_+ , +\infty )$.  If $\l_c >0$, then there exists a unique $\th_c^+ $
  such that $\tilde{\lambda}_+(\th_c^+ ) =0$, $\tilde{\lambda}_+$ is negative on the left of $\th_c^+$ and is positive  on the right  of $\th_c^+$.
% assuming first $\l_c=0$. As already observed, $\tilde{\l}_+$ is strictly increasing on $(\a_+,+\infty)$ and $\lim _{\th \to \infty}\tilde{\l}_+ (\th)= \l_c=0$.  In particular, $\tilde {\l}_+$ must be negative, and by \eqref{mattina} we conclude that  $J_+$ is strictly decreasing on $(\a_+ , +\infty )$.
Hence,   by \eqref{mattina} we see that $J_+$ has a unique minimum at $\th = \th_c^+$.
In this case, from \eqref{lambda_tilde} we have
	\[ \th_c^+ = (\log \varphi_+ )' (0) = \frac{\varphi_+ '(0)}{\varphi_+(0)} 
	= \frac{\bbE (T_1 \mathds{1}(T_1 <\infty ))}{\bbP (T_1 <\infty )} \, . \]
If $v>0$, then by the LLN in Proposition \ref{teo1}   we get that $T_1 <\infty$ a.s. and $\th_c^+ = \bbE (T_1 )=1/v$  (cf. Remark \ref{allergia}). Hence, recalling that $\tilde{\lambda}_+(\th_c^+ )=0$, 
	\[ \inf_{\th \in \bbR } J_+(\th ) = J_+(\th_c^+ ) 
	=\tilde{\lambda}_+(\th_c^+ )  \th_c^+ - \log \varphi_+ (\tilde{\lambda}_+(\th_c^+ ))
	= - \log \bbP (T_1 < \infty ) = 0 \, . \]
	The case $v<0$ can be treated similarly. 
We conclude the proof by showing that $v=0 \Leftrightarrow \lambda_c =0$.  Trivially,
%  Since $v= \bbE(w_1)/ \bbE(\t_1)$, 
	$v=0 \Leftrightarrow  \bbP( w_1=\pm 1)=\frac{1}{2} \Leftrightarrow \bbP (w_1 =1 ) \bbP (w_1 = -1) = \frac{1}{4} $. The last identity can be rewritten as 
	$ f_+(0) f_-(0) = \frac{1}{4}$, where the function $f_+,f_-$ are   defined as in \eqref{cioccolato}. Due to the characterization of $\l_c$ given after \eqref{pisolo},  we conclude that the last identity is equivalent to $\l_c=0$.
	
	\medskip 
 
%%%%%%%%%%%%%%%%%%%%%%%%
To derive Point (v) we   note that by Point (iv) the limit in \eqref{alpha} exists. We first  assume  $\bbP ( T_1 = \a_+ ) =0$. Taking $\d>0$ and $\l <0$ we can bound
$$ \varphi_+( \l) \leq e^{\l  \a_+}  \bbP (T_1 \leq \a_+ +\d)+ e^{\l (\a_++\d) } \bbP( T_1 > \a_++\d)\,,$$
thus implying 
$$J_+(\a_+) \geq  \l \a_+ - \log \varphi_+(\l) \geq -\log \bigl[  \bbP (T_1 \leq \a_+ +\d)+ e^{\l \d } \bbP( T_1 > \a_++\d)
\bigr] \,.$$ To get that $J_+(\a_+)=+\infty$ it is enough to take first the limit  $\l \to -\infty$ and afterwards the limit  $\d \to 0$.  Since $J_+$ is also l.s.c. one has $\lim_{ \th \searrow \a_+ } J_+(\th ) \geq J_+ (\a_+ )$ and therefore one gets \eqref{alpha}.

 Assume, on the other hand, that $\bbP ( T_1 = \a_+ ) >0$. %We want to show that in this case $J_+(\a_+ )$ is finite and equal to the right--limit of $J_+$ in $  \a_+$. 
 The fact that $J_+ (\a_+ ) <\infty$ follows  by the LDP for $T_n$ (cf. Subsection \ref{mec_yek1}) and the characterization of $\a_+$ given in Proposition \ref{sirius_black}--(ii). Indeed we can bound
	\[ \begin{split}
	 -J_+ (\a_+ ) & \geq 
	\limsup_{n \to \infty} \frac{1}{n} \log \bbP \left( \frac{T_n}{n} = \a_+ \right) 
	= \limsup_{n \to \infty} \frac{1}{n} \log \bbP ( \t_1 = \a_+ , w_1 = 1 )^n \\ &
	= \log \bbP ( \t_1 = \a_+ , w_1 = 1 )= \log \bbP(T_1= \a_+) > -\infty\,.
	\end{split} \]
%from which $J_+ (\a_+ ) \leq - \log \bbP ( \t _1= \a_+ , w_1 = 1 ) < \infty$.
 To see that $J_+$ is right--continuous at $\a_+$ observe that by lower semicontinuity
$ J_+(\a_+ ) \leq \lim_{ \th \searrow \a_+ } J_+(\th )$.
We claim that   $ J_+(\a_+ ) \geq \lim_{ \th \searrow \a_+ } J_+(\th )$.
  Indeed, fixed $\a_0> \a_+$, by convexity it holds
\[ J_+(\a_+) \geq\frac{1}{1-\l } J_+\bigl( \, (1-\l) \a_+ +\l \a_0\, \bigr) - \frac{\l}{1-\l} J_+( \a_0)\,.
\]
The claim then follows from the monotonicity of $J_+ $ on the right of $\a_+$. 
Combining the last observations we get 
	$\lim_{ \th \searrow \a_+ } J_+(\th ) = J_+ (\a_+ ) < \infty $
and this concludes the proof of Point (v).
\end{proof}

We now move to the study of the  function $I(\th )$ defined on $\bbR \setminus\{0\}$ as 
\begin{equation} \label{rate_fun}
	I(\th ) = 
	\begin{cases}
	\ds I_+(\th ): = \sup_{\lambda \in \bbR} \{ \lambda - \th \log \varphi_+(\lambda ) \} , \; \th >0 \,,\\
%	\ds \lim_{\th \nearrow 0} I_-(\th ) = \lim_{\th \searrow 0} I_+(\th ) = \lambda_c \, , \quad \th=0 \\
	\ds I_-(\th ): = \sup_{\lambda \in \bbR} \{ \lambda + \th \log \varphi_-(\lambda ) \} , \; \th <0 \,.
	\end{cases}
\end{equation}

%Using Lemma \ref{silente} we can study the behavior of  $I$ around $\th = 0$:
\begin{Lemma}\label{biancaneve}
It holds
\begin{align}
&  \lim_{\th \nearrow 0} I_-(\th ) = \lim_{\th \searrow 0} I_+(\th ) = \lambda_c \,,\label{nano1}\\
&   \lim_{\th \nearrow 0} I_- ' (\th ) =  \lim_{\th \searrow 0} I_+ ' (\th ) \,. \label{nano2}
\end{align}
In particular, the definition of $I(\th)$ in Theorem \ref{teo2} is well posed and   $I(0)=\l_c$. Moreover,  $I$ is finite and  $C^1$ on $(-1/\a_-,1/\a_+)$, and it is smooth on $(-1/\a_-,1/\a_+)\setminus\{0\}$.
\end{Lemma}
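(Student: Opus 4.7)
The plan is to work directly with the variational representations in \eqref{rate_fun}, apply the envelope theorem on each side of $0$, and then match the two one-sided limits using the identity \eqref{pisolo} together with its symmetric counterpart for $\varphi_-$.

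First I will analyze $I_+$ as $\theta \searrow 0$. For $\theta \in (0, 1/\alpha_+)$, since $\log \varphi_+ \equiv +\infty$ on $(\lambda_c, +\infty)$, the supremum in $I_+(\theta) = \sup_\lambda\{\lambda - \theta \log \varphi_+(\lambda)\}$ is attained on $(-\infty, \lambda_c)$. On this interval Lemma \ref{silente} makes $\log \varphi_+$ smooth and strictly convex, while \eqref{derivo1}--\eqref{derivo2} show that $(\log \varphi_+)'$ maps $(-\infty, \lambda_c)$ bijectively onto $(\alpha_+, +\infty)$. Since $1/\theta$ lies in this range, the first--order condition $(\log \varphi_+)'(\lambda) = 1/\theta$ singles out a unique interior maximizer $\lambda_+^*(\theta)$, and clearly $\lambda_+^*(\theta) \nearrow \lambda_c$ as $\theta \searrow 0$. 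Substituting back, continuity of $\varphi_+$ at $\lambda_c$ and the finiteness $\varphi_+(\lambda_c) < +\infty$ from Lemma \ref{silente}(i) give $I_+(\theta) = \lambda_+^*(\theta) - \theta \log \varphi_+(\lambda_+^*(\theta)) \to \lambda_c$, whereas the envelope theorem yields $I_+'(\theta) = -\log \varphi_+(\lambda_+^*(\theta)) \to -\log \varphi_+(\lambda_c)$.

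The symmetric analysis for $I_-$ as $\theta \nearrow 0$ proceeds identically: the optimizer $\lambda_-^*(\theta)$ is characterized by $(\log \varphi_-)'(\lambda_-^*(\theta)) = -1/\theta \to +\infty$, hence $\lambda_-^*(\theta) \nearrow \lambda_c$, producing $I_-(\theta) \to \lambda_c$ and $I_-'(\theta) \to \log \varphi_-(\lambda_c)$. This establishes \eqref{nano1} at once. The main obstacle is then \eqref{nano2}, which reduces to the non--obvious identity $\varphi_+(\lambda_c) \varphi_-(\lambda_c) = 1$. I plan to obtain it by combining \eqref{pisolo}, namely $\varphi_+(\lambda_c) = 1/(2 f_-(\lambda_c))$, with its symmetric counterpart $\varphi_-(\lambda_c) = 1/(2 f_+(\lambda_c))$ (obtained by interchanging the roles of $\pm 1$ throughout the derivation of Proposition \ref{sirius_black}(i)), and the defining equation $f_+(\lambda_c) f_-(\lambda_c) = 1/4$; the product of the two formulas collapses to $1$.

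The remaining assertions are then routine. Well--posedness of $I(0) = \lim_{\theta \to 0} I(\theta) = \lambda_c$ follows from \eqref{nano1}, and the equality of the one--sided derivatives provided by \eqref{nano2} upgrades the regularity at the origin to $C^1$. Away from $0$, finiteness and smoothness on $(-1/\alpha_-, 1/\alpha_+)\setminus\{0\}$ read off the identities $I(\theta) = \theta J_+(1/\theta)$ for $\theta > 0$ and $I(\theta) = |\theta| J_-(1/|\theta|)$ for $\theta < 0$ in combination with Proposition \ref{J_study}(ii)--(iii), since the reciprocal map sends $(0, 1/\alpha_+)$ (respectively $(-1/\alpha_-, 0)$) diffeomorphically onto $(\alpha_+, +\infty)$ (respectively $(\alpha_-, +\infty)$), where $J_\pm$ is smooth.
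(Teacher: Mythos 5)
Your proposal is correct and follows essentially the same route as the paper: the first--order condition $(\log\varphi_\pm)'(\lambda_\pm(\theta))=\pm 1/\theta$ identifying the interior maximizer, the convergence $\lambda_\pm(\theta)\to\lambda_c$ as $\theta\to 0$, the envelope computation $I_+'(\theta)=-\log\varphi_+(\lambda_+(\theta))$ and $I_-'(\theta)=\log\varphi_-(\lambda_-(\theta))$, and the matching of the one--sided derivative limits via $\varphi_+(\lambda_c)\varphi_-(\lambda_c)=1/\bigl(4f_+(\lambda_c)f_-(\lambda_c)\bigr)=1$ from \eqref{pisolo} and its counterpart for $\varphi_-$. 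The concluding regularity statements are likewise handled as in the paper (the paper argues smoothness of $\lambda_\pm(\theta)$ via the implicit function theorem rather than by composing with $J_\pm$, but the two are equivalent given Proposition \ref{J_study}).
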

\begin{proof}
For any $\th >0$ we have 
$ I(\th ) = \sup_{\lambda \leq \lambda_c } \{ \lambda - \th \log\varphi_+ (\lambda) \} $ since $\varphi_+(\l)=+\infty$ if $\l > \l_c$. %Note that the supremum has been restricted to $\lambda \in (-\infty , \lambda_c ]$, being the argument $-\infty$ for any $\lambda > \lambda_c$, and that
Moreover,  always by Lemma \ref{silente},  for $ 0< \th <1/\a_+ $ the above supremum is attained at the unique  value  
%the  it is attained when 
%	\[ (\log\varphi_+ (\lambda))' = \frac{1}{\th }\, . \]
%Recall that the LHS takes values in $(\a , +\infty )$ and that it is strictly increasing, with $\lim_{ \lambda \nearrow \lambda_c }  (\log \varphi_+ (\lambda))' = +\infty$. Therefore for each $ 0< \th <1/\a $ there exists a unique value
 $\lambda_+ (\th) < \lambda_c $ such that 
	\begin{equation}\label{lori_pierpi}(\log\varphi_+ )' \bigl( \l_+(\th) \bigr)   = 1/\th
	%(\log\varphi_+ (\lambda))'\Big |_{\l = \l_+(\th)}  = \frac{1}{\th } 
	\, ,\end{equation}
	thus implying that $\l_+(\th)$ is a strictly decreasing function and  $\lim_{\th \searrow 0} \lambda_+ (\th ) = \lambda_c$ (due to Lemma \ref{silente}). In particular,  $I(\th)= \l_+(\th) - \th \log \varphi_+\bigl(\l_+(\th)\bigr )$ is finite on $(0, 1/\a_+)$ and moreover
	$$ \lim  _{\th \searrow 0}I(\th)=  \lim  _{\th \searrow 0} \{ \lambda_+(\th)  - \th \log\varphi_+ (\lambda_+(\th)) \} = \l_c$$
	since $ \lim_{ \l \nearrow \l_c} \log \varphi_+(\l)= \log \varphi_+(\l_c)$ which is finite due to \eqref{pisolo}.  This concludes the proof of \eqref{nano1} for $I_+$.  By similar arguments one gets that, given  $\th \in (-1/\a_-, 0)$   
	there is a unique value $\l_-(\th)$ solving  the equation
	\begin{equation}\label{lori_pierpi77}(\log\varphi_- )' \bigl( \l_-(\th) \bigr)   =- 1/\th\,.
	%(\log\varphi_+ (\lambda))'\Big |_{\l = \l_+(\th)}  = \frac{1}{\th } 
	\end{equation}
	The function $\l_-$ is strictly increasing on $(-1/\a_-,0)$ where it holds $I(\th)= \l_-(\th)+\th \log\varphi_- (\l_-(\th) ) $. As above one gets that $ \lim_{\th \nearrow 0} I_-(\th ) = \l_c$, hence \eqref{nano1}. Note that \eqref{nano1} implies that $I$ is well defined  in Theorem \ref{teo2} and that $I(0)=\l_c$. By the previous results we conclude also that $I$ is finite on $ (-1/\a_-, 1/\a_+)$.
	
	Let us now prove \eqref{nano2} and that $I$ is $C^1$ on $ (-1/\a_-, 1/\a_+)\setminus\{0\}$.
  By the implicit function theorem and Lemma \ref{silente}, the function $(0,1/\a_+) \ni \th \to \l_+(\th) \in (-\infty,\l_c) $ is smooth. In particular, using \eqref{lori_pierpi},  $I_+$ is smooth  on  $(0,1/\a_+)$  where it holds
	%As already observed the above function is decreasing and  $\lim_{\th \searrow 0} \lambda_+ (\th ) = \lambda_c$.
 \begin{equation}\label{matteo}
\begin{split}
 I_+ '(\th )  
	& =\frac{\mathrm{d}}{\mathrm{d} \th } \big( \lambda_+(\th ) - \th \log\varphi_+ (\lambda_+(\th )) \big) \\
	& = \lambda_+ ' (\th ) - \log\varphi_+ (\lambda_+(\th )) -\th \cdot
	( \log\varphi_+ )'( \l_+(\th) ) \cdot \l_+' (\th) 
\\	
%	\underbrace{ ( \log\varphi_+ (\lambda_+(\th )))'}_{1/\th } \bigg) \\
	& =  - \log\varphi_+ (\lambda_+(\th ))   \, . 
\end{split} 
\end{equation} Hence,   $\lim_{\th \searrow 0} I_+ '(\th ) = - \log \varphi_+ (\lambda_c )$.
% Note that in the third identity we have used \eqref{lori_pierpi}.
By similar arguments and definitions  we get that  $I_-$ is smooth on  $(-1/\a_-,0)$ where it holds
	\[ \lim_{\th \nearrow 0} I_- '(\th ) =
	 \lim_{\th \nearrow 0} \frac{\mathrm{d}}{\mathrm{d} \th } \big( \lambda_-(\th ) + \th \log\varphi_- (\lambda_-(\th )) \big)
	 	= \log \varphi_- (\lambda_c )  \, . \]
To conclude the proof of \eqref{nano2} it remains to show that 
 $\log \varphi_- (\lambda_c ) = -\log \varphi_+ (\lambda_c ) $. To this aim we observe that 
	\[ \begin{split}
	\log \varphi_- (\lambda_c ) + \log \varphi_+ (\lambda_c ) = \log [ \varphi_+ (\lambda_c ) \varphi_- (\lambda_c) ] 
	= \log \left( \frac{1}{4 f_+(\lambda_c ) f_-(\lambda_c )} \right)  =0 
	\end{split} \]
	due to \eqref{pisolo}, its analogous version for $\varphi_-(\l_c)$ and 
since, by definition, $\lambda_c$ is the unique solution of $4 f_-(\lambda) f_+(\lambda) =1$
This concludes the proof of \eqref{nano2} and that $I$ is smooth on $ (-1/\a_-, 1/\a_+)\setminus\{0\}$. Due to \eqref{nano2} one easily gets that $I$ is differentiable at $0$ and $I'(0)$ equals the limits in \eqref{nano2}.  This implies that  $I$ is $C^1$ on $ (-1/\a_-, 1/\a_+)$.
\end{proof} 

Combining  Lemmas \ref{silente}, \ref{biancaneve} and Proposition \ref{J_study} we are finally able to prove Theorem \ref{acqua} and therefore also Theorem \ref{I_study} due to Lemma \ref{torlonia}.

\begin{proof}[Proof of Theorem   \ref{acqua}]  Below the labeling of items is as in Theorem \ref{I_study}.

The fact that $I$ is finite and $C^1$  on $\big( -\frac{1}{\a_-} , \frac{1}{\a_+} \big)$ and infinite outside  $[  -\frac{1}{\a_-} , \frac{1}{\a_+} ]$ follows from \eqref{rate_f100} and Proposition \ref{J_study}. This proves Item (i). %
%\footnote{Here we mean $(-\infty ,  -\frac{1}{\a_-} \big) = \emptyset$ if $\a_- = 0$ and $\big( \frac{1}{\a_+} , +\infty )=\emptyset$ if $\a_+ = 0$.}.

To prove Item  (ii) note that if $\bbP ( T_1 = \a_+ ) >0$ then $\a_+ >0$. Hence,  by \eqref{alpha} we get 	\[ \lim_{\th \nearrow \frac{1}{\a_+ }} I(\th ) = \lim_{\th \nearrow \frac{1}{\a_+} } \th J_+ \big(\frac{1}{\th} \big) 
	= \frac{J_+(\a_+ )}{\a_+} < \infty \]
and the last term equals $I(1/\a_+ ) $ by definition of $I$.
If, on the other hand, $\bbP ( T_1 = \a_+ ) = 0$, then  by \eqref{alpha} we get	\[ \lim_{\th \nearrow \frac{1}{\a_+} } I(\th ) = \lim_{u \searrow \a_+ } \frac{ J_+ (u)}{u} = +\infty\, . \]
The correspondent statements for $\th \searrow -1/\a_-$ are obtained in the same way, and this concludes the proof of Item (ii). 

To see (iii) recall that $I'(\th ) = -\log \varphi_+ (\lambda_+ (\th ))$ for $\l \in(0,1/\a_+)$ (see \eqref{matteo}). Observe now that 
 $ \lim_{\th \nearrow \frac{1}{\a_+} } \lambda_+(\th ) =% \lim_{u \searrow \a_+ } \tilde{\lambda}_+(u ) =
  -\infty$ (due to Lemma \ref{silente}--(ii) and \eqref{lori_pierpi}). This implies that
	\[\lim_{\th \nearrow \frac{1}{\a_+} } (-\log \varphi_+ (\lambda_+ (\th ))) = -\log \varphi_+ (-\infty) = +\infty\, . \]
Similarly one sees that 
	\[ \lim_{\th \searrow -\frac{1}{\a_-} } I'(\th ) =
	\lim_{\th \searrow -\frac{1}{\a_-} }  \log \varphi_- (\lambda_- (\th )) = \log \varphi_- (-\infty) = -\infty\, . \]

We now consider Item (iv). We observe that  $I_+,I_-$ are  l.s.c. because they can be expressed as pointwise suprema of  continuous functions, and by \eqref{nano1} they attach in $0$ in a continuous fashion.  	
We now  prove that $I$ is convex. Being suprema of families of linear functions, $I_+$ and $I_-$ are convex. Therefore $I$ is convex on $(0, +\infty)$ and $(-\infty , 0)$ separately. To prove the convexity on all $\bbR$ it remains to show that $I$ is also convex in $\th =0$.
Since the left and right branches of $I$ are differentiable, it suffices to show that the left derivative at $\th = 0$ is non greater than the right derivative. In fact,   they are equal due to  \eqref{nano2}. Let us now prove that $I$ is strictly convex on the closure of $(-1/\a_-, 1/\a_+)$. We know that $I'(\th)=
	\log \varphi_-(\l_-(\th) )$ on $(-1/\a_-, 0]$ and  $I'(\th)=
	-\log \varphi_+(\l_+(\th) )$ on $[0, 1/\a_+)$ (see the proof of Lemma \ref{biancaneve}).
	By Lemma \ref{silente}--(ii) $\log \varphi_\pm$ is strictly increasing with positive derivative, while we know that $\l_+$ is a strictly decreasing function on $(0, 1/\a_+)$  and $\l_-$ is a strictly 
	increasing on $(-1/\a_-,0)$. Using also that $I'$ is continuous at $0$ we conclude  that $I'$ is strictly increasing on $(-1/\a_-, 1/\a_+)$, hence $I$ is strictly convex on $(-1/\a_-, 1/\a_+)$.

We conclude with Item (v). We know that  $I'$ is strictly increasing on $(-1/\a_-, 1/\a_+)$. Due to Item (iii)  it simple to conclude that there exists a unique minimum point $\th_* \in (-1/\a_-, 1/\a_+)$ such that $I$ is strictly decreasing on $(-1/\a_-,\th_*)$ and strictly increasing on $(\th_*, 1/\a_+)$. It remains to prove that $\th_*=v$.

If $v>0$ then by Prop.  \ref{J_study}--(iv) $v = 1 / \th_c^+$ and so
	$ I(v) = v J_+ \big(1/v\big) = (1/\th_c^+) J_+ (\th_c^+ ) = 0$.
If $v<0$ then $v = -1 / \th_c^-$ and so
	$ I(v) = -v J_- \big( -1/v \big) =(1/\th_c^-) J_- (\th_c^- ) = 0$. If, finally, $v=0$ then again by Prop. \ref{J_study}--(iv) we have $0 = \lambda_c = I(0) = I(v)$. 
In all cases $I(v) =0$ and since $I$ is non--negative we conclude that $v= \th_*$.
\end{proof}

\section{Proof of Theorem \ref{teo2}--(ii)}\label{mec_yek3}
Below we show how one can deduce the  LDP for the process $Z $ itself
 from the LDP for the hitting times. Due to 
   Theorem 4.1.11 in \cite{DZ}, the LDP for $Z_t/t$ holds  with rate function $I$ and speed $t$ 
   if we show that 
	\begin{equation} \label{LB} \tag{LB}
	\lim_{\varepsilon \to 0} \liminf_{t\to\infty} \frac{1}{t} \log \bbP\left( \frac{Z_t}{t}\in  (\th -\varepsilon , \th +\varepsilon ) \right)\geq - I(\th )\,,
	\end{equation}
	\begin{equation} \label{UB} \tag{UB}
	\lim_{\varepsilon \to 0} \limsup_{t\to\infty} \frac{1}{t} \log   \bbP\left( \frac{Z_t}{t}\in  (\th -\varepsilon , \th +\varepsilon ) \right)\leq - I(\th ) \, .
	\end{equation}

%Note that \eqref{LB} and \eqref{UB} together imply that the limit of the above quantity exists and equals $-I(\th )$. 
%Nevertheless, we keep the two bounds separate in light of the fact that they will be proven using different methods.

\subsection{The lower bound}
Given  $\th \neq 0$ and  $\d ,c \in (0,1)$ we define the events 
\[ \begin{split}
	& A_t =A_t(\d, \th):= \{ (1-\d )t < T_{\lfloor \th t \rfloor} < (1+\d )t \} \\
	& B_t =B_t(\d,c):= \{ \nu(t + \d t) - \nu(t -\d t) \leq c  t  \} \, , 
	\end{split} \]
\begin{Lemma}\label{theta>0}
For any $\th \neq 0$ and  $\d ,c \in (0,1)$  there exists $\tilde{t} = \tilde{t}(c )>0$ such that	\[ A_t \cap B_t \subseteq \{ Z_t/t  \in (\th -2c , \th  + 2c )\} \,  \]
 for all $t>\tilde{t}$.
\end{Lemma}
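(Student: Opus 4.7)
The plan is a direct deterministic/pathwise estimate: on $A_t \cap B_t$, the process $Z$ is known to hit $\lfloor \theta t\rfloor$ at some time inside $[t-\delta t, t+\delta t]$, and during that entire window $Z$ cannot move by more than the number of jumps it performs, which on $B_t$ is at most $ct$. Hence $Z_t$ stays within $ct$ of $\lfloor \theta t\rfloor$, which is essentially $\theta t$.

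More precisely, I would fix $\theta\neq 0$ and argue as follows. Since $w_i \in \{-1,+1\}$ by Assumption (A1), for every $s_1 \leq s_2$ we have the pathwise bound
\[
|Z_{s_2} - Z_{s_1}| = |W_{\nu(s_2)} - W_{\nu(s_1)}| \leq |\nu(s_2) - \nu(s_1)|.
\]
On the event $A_t$ both $t$ and $T_{\lfloor \theta t\rfloor}$ lie in $[t-\delta t, t+\delta t]$, so by monotonicity of $\nu$
\[
|\nu(t) - \nu(T_{\lfloor \theta t\rfloor})| \leq \nu(t+\delta t) - \nu(t-\delta t).
\]
On $B_t$ the right hand side is at most $ct$, so combining the two inequalities yields $|Z_t - Z_{T_{\lfloor \theta t\rfloor}}| \leq ct$ on $A_t \cap B_t$.

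Next, because $Z$ moves by unit increments and $T_n$ is the first hitting time of $n$, one has $Z_{T_{\lfloor \theta t\rfloor}} = \lfloor \theta t\rfloor$. Therefore $|Z_t - \lfloor \theta t\rfloor| \leq ct$, and dividing by $t$ gives
\[
\left| \frac{Z_t}{t} - \theta \right| \leq \left| \frac{Z_t - \lfloor \theta t \rfloor}{t}\right| + \left| \frac{\lfloor \theta t\rfloor}{t} - \theta\right| \leq c + \frac{1}{t}.
\]
Choosing $\tilde t = \tilde t(c) := 1/c$, for every $t>\tilde t$ the right hand side is strictly less than $2c$, which gives the desired inclusion $A_t\cap B_t \subseteq \{Z_t/t \in (\theta-2c,\theta+2c)\}$. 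There is no real obstacle here; the only thing to be a bit careful about is that the lemma is pathwise and holds for all $\theta\neq 0$ with the same argument, and that the constant $\tilde t$ depends only on $c$ (not on $\theta$ or $\delta$).
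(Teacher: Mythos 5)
Your proof is correct and follows essentially the same route as the paper's: bound $|Z_t-\lfloor\th t\rfloor|=|W_{\nu(t)}-W_{\nu(T_{\lfloor\th t\rfloor})}|$ by the number of jumps $\nu(t+\d t)-\nu(t-\d t)\le ct$ using (A1) and the definitions of $A_t$ and $B_t$, then absorb the $1/t$ rounding error by taking $t>\tilde t(c)=1/c$. Nothing to add.
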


\begin{proof}
Take any $t>0$ and assume $A_t \cap B_t$ holds. Then, due to Assumption (A1), 
	\begin{equation*}
	\begin{split}
	| Z_t - \lfloor \th t \rfloor |
	&  = | Z_t - Z_{T_{\lfloor \th t \rfloor }} | =
	 \bigl| W_{\nu(t) }- W_{\nu( T_{\lfloor \th t \rfloor }) }\bigr|\\
	 &
	\leq \nu ( t \vee T_{\lfloor \th t \rfloor } ) - \nu ( t \wedge T_{\lfloor \th t \rfloor } ) 
	 \leq 
	\nu(t + \d t) - \nu(t -\d t) \leq c  t  \, .	
	\end{split}
	\end{equation*}
Hence $Z_t \in [ \lfloor \th t \rfloor -c t , \lfloor \th t \rfloor +c t ]$, thus leading to the thesis.
% which implies $\ds \frac{\lfloor \th t \rfloor}{t} - c \leq \frac{Z_t}{t} \leq \frac{\lfloor \th t \rfloor}{t} +c  $. Now it  is trivial  to conclude.
\end{proof}

%\pecetta{Nel seguito bisogna sostituire $P$ con $\bbP$ per uniformare la notazione con la prima parte del file} 

\begin{Lemma}\label{ananas}
For any $\th \not =0$ and $\d \in (0,1)$ it
 holds $$ \liminf_{t \to \infty} \frac{1}{t} \log \bbP (A_t) \geq -I(\th)=
 \begin{cases}
  -\th J_+ \big(\frac{1}{\th} \big) & \text{ if }\, \th >0\,,\\
    \th J_- \big(-\frac{1}{\th} \big) & \text{ if } \,\th <0 \,.
    \end{cases}
    $$
\end{Lemma}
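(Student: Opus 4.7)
The plan is to deduce the lower bound directly from the LDP for $T_n/|n|$ established in Theorem \ref{teo2}(i), by rewriting the event $A_t$ in terms of $T_n/n$ and then converting between the speeds $n$ and $t$.

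I treat the case $\th>0$ first. Set $n=n(t):=\lfloor \th t\rfloor$, so that $n\to\infty$ and $n/t\to\th$ as $t\to\infty$. Since $Z$ has jumps in $\{-1,+1\}$ and $\lfloor \th t\rfloor>0$ for $t$ large, the event $A_t$ can be rewritten as
\[
 A_t \;=\; \bigl\{\, T_n/n \in I_t \,\bigr\}, \qquad I_t:=\bigl( (1-\d)t/n,\; (1+\d)t/n \bigr).
\]
Because $t/n\to 1/\th$, the intervals $I_t$ converge to $\bigl((1-\d)/\th,\,(1+\d)/\th\bigr)$, which is an open neighborhood of $1/\th$ since $\d\in(0,1)$. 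Fix then an open interval $\cO$ with $1/\th\in\cO$ and $\overline{\cO}\subset\bigl((1-\d)/\th,\,(1+\d)/\th\bigr)$. For all $t$ large enough one has $\cO\subseteq I_t$, hence
\[
\bbP(A_t)\;\geq\;\bbP\bigl(T_n/n\in\cO\bigr).
\]

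Next I apply the LDP lower bound for $T_n/n$ at speed $n$ with good (or not) convex rate function $J_+$, proved in Subsection \ref{mec_yek1}: since $\cO$ is open,
\[
 \liminf_{n\to\infty}\frac{1}{n}\log\bbP\bigl(T_n/n\in\cO\bigr)\;\geq\;-\inf_{\cO}J_+\;\geq\;-J_+(1/\th),
\]
the last inequality being trivial because $1/\th\in\cO$ (and if $J_+(1/\th)=+\infty$, the claim of the lemma is vacuous). Writing $\frac{1}{t}\log\bbP=\frac{n}{t}\cdot\frac{1}{n}\log\bbP$, using that $n/t\to\th>0$, that $\log\bbP\leq 0$ is bounded above, and that $n=n(t)$ ranges over all sufficiently large integers as $t$ varies, I conclude
\[
\liminf_{t\to\infty}\frac{1}{t}\log\bbP(A_t)\;\geq\;\th\cdot\liminf_{n\to\infty}\frac{1}{n}\log\bbP\bigl(T_n/n\in\cO\bigr)\;\geq\;-\th\,J_+(1/\th)\;=\;-I(\th).
\]

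The case $\th<0$ is entirely symmetric: I set $m=m(t):=|\lfloor\th t\rfloor|$, rewrite $A_t=\{T_{-m}/m\in I_t'\}$ with $I_t'\to((1-\d)/|\th|,(1+\d)/|\th|)$, apply the LDP for $T_{-m}/m$ at speed $m$ with rate $J_-$, and convert speeds via $m/t\to|\th|$ to obtain $\liminf \frac{1}{t}\log\bbP(A_t)\geq -|\th|J_-(1/|\th|)=-I(\th)$.

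I do not expect any serious obstacle: the only mildly delicate point is the speed conversion $\frac{1}{t}=\frac{n}{t}\cdot\frac{1}{n}$, which works cleanly because $n/t$ tends to a strictly positive limit and $\log\bbP\leq 0$, so the liminf passes through the positive factor. No continuity property of $J_+$ at $1/\th$ is needed, because the trivial bound $\inf_\cO J_+\leq J_+(1/\th)$ suffices, and the boundary case $1/\th=\a_+$ (only possible if $\a_+>0$) is handled identically since $J_+(\a_+)$ is then the relevant (possibly finite) value.
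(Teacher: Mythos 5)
Your proposal is correct and follows essentially the same route as the paper: rewrite $A_t$ as an event for $T_{\lfloor\th t\rfloor}/\lfloor\th t\rfloor$, sandwich a fixed open interval containing $1/\th$ inside the $t$-dependent one for $t$ large, apply the LDP lower bound for the hitting times, and convert speeds via $\lfloor\th t\rfloor/t\to\th$. The only difference is cosmetic (you use a generic open $\cO$ with closure in the limiting interval, the paper shifts the left endpoint by an explicit $\e$), and your handling of the speed conversion and of the vacuous case $J_+(1/\th)=+\infty$ is sound.
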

\begin{proof} 
We give the proof for $\th >0$, the one for $\th <0$ being the same. Note that, fixed $\e>0$,  
for $t$ large enough it holds
	\[ %\begin{split}
	\frac{1}{t} \log \bbP (A_t) 
	%& = \frac{1}{t} \log \bbP \bigg( \frac{(1-\d )t}{\lfloor \th t \rfloor} < \frac{T_{\lfloor \th t \rfloor}}{\lfloor \th t \rfloor} < \frac{(1+\d )t}{\lfloor \th t \rfloor} \bigg) \\
	%&
	 \geq \frac{1}{t} \log \bbP  \bigg( \frac{1-\d }{\th } +\varepsilon  < \frac{T_{\lfloor \th t \rfloor}}{\lfloor \th t \rfloor} < \frac{1+\d }{\th} \bigg) \,.
	%\end{split}
	 \]
Thanks to the LDP for the hitting times $T_n$  this implies that
	\[ \begin{split}
	\liminf_{t \to \infty} \frac{1}{t} \log \bbP (A_t) 
	%& \geq \liminf_{t\to \infty} 
	%\bigg\{ \frac{\lfloor \th t \rfloor}{t} \frac{1}{\lfloor \th t \rfloor} \log \bbP \bigg( \frac{1-\d }{\th } +\varepsilon  < \frac{T_{\lfloor \th t \rfloor}}{\lfloor \th t \rfloor} < \frac{1+\d }{\th} \bigg)\bigg\} \\
	& \geq \th 	\liminf_{n\to\infty} \frac{1}{n} \log \bbP \bigg( \frac{1-\d }{\th } +\varepsilon  < \frac{T_n}{n} < \frac{1+\d }{\th} \bigg) \\
	& \geq -\th \inf_{ \big(  \frac{1-\d }{\th } +\varepsilon  , \frac{1+\d }{\th} \big) } J_+ 
	\geq - \th J_+ \big( \frac{1}{\th} \big) 
	\end{split} \]
as long as $\varepsilon$ is chosen small enough so that $\frac{1}{\th} \in \big( \frac{1-\d }{\th } +\varepsilon  , \frac{1+\d }{\th} \big) $.
\end{proof}
%%%%%%%%%%%%%%%%%%%%
Since $\t_i$'s are positive i.i.d. random variables, for every $p_0 \in (0 , 1)$ we can find some  $\eta>0$ such that $p: = \bbP( \t_i \geq \eta) > p_0 $. In particular, the  i.i.d. random variables $r_i$'s with 
 $r_i := \mathds{1}(\t_i\geq \eta)$ are Bernoulli of parameter $p$. They are a useful tool to bound the  probability of $B_t^c$:
\begin{Lemma}\label{pesca} 
For any $\th \neq 0$ and any  $c \in (0,1)$, there exists a constant  $\d_*=\d_*(\theta, c)\in (0,1)$ depending only on $\theta,c$  such that, for all  $\d\in (0,\d_*]$, it holds
$$ \lim_{t \to \infty}  \frac{\bbP (B_t ^c ) }{ \bbP(A_t)} =0\,.
$$
\end{Lemma}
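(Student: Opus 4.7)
The plan is to show that $\bbP(B_t^c)$ decays exponentially in $t$ at a rate strictly larger than $I(\theta)$; combined with the lower bound $\bbP(A_t) \geq e^{-t(I(\theta) + o(1))}$ from Lemma \ref{ananas}, this forces the ratio to $0$. The auxiliary parameters $\eta$ (hence $p = \bbP(\t_1 \geq \eta)$) and $\delta$ will be tuned so that the achievable rate beats the fixed number $I(\theta)$; I will assume $I(\theta) < \infty$, the case $I(\theta) = +\infty$ being handled analogously since the achievable rate on the left is unbounded.

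The core inclusion is pathwise. On $B_t^c$, set $m := \nu(t - \delta t)$ and $n := \nu(t + \delta t)$: then $n - m > ct$ and $\cT_n - \cT_{m+1} \leq 2 \delta t$, so the sum $\t_{m+2} + \cdots + \t_n$ of at least $\lceil ct \rceil$ consecutive renewal increments is at most $2 \delta t$. Since each $\t_i \geq \eta$ would contribute at least $\eta$ to this sum, at most $\lfloor 2 \delta t / \eta \rfloor$ of these indices can satisfy $r_i = 1$. To handle the random location $m$, I would first dispose of the event $\{\nu(t + \delta t) > C t\}$ for a large constant $C$: a Chernoff bound applied to the i.i.d.\ sum $\cT_{\lceil Ct \rceil}$, using the finite Laplace transform $\bbE(e^{-\lambda \t_1})$ for $\lambda > 0$ (finite since $\t_1 > 0$ a.s.), gives $\bbP(\nu(t + \delta t) > Ct) \leq e^{-t \rho(C)}$ with $\rho(C) \to +\infty$ as $C \to +\infty$. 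On the complement, a union bound plus stationarity yields
\[
\bbP\bigl(B_t^c \cap \{\nu(t + \delta t) \leq C t\}\bigr)
\;\leq\; (\lceil C t \rceil + 1)\,
\bbP\!\left( \sum_{i=1}^{\lceil c t \rceil} r_i \leq \frac{2 \delta t}{\eta} \right),
\]
and Cram\'er's theorem for Bernoulli$(p)$ variables bounds the last probability by $\exp(-\lceil ct \rceil K(q_t))$, where $q_t := 2 \delta t / (\eta \lceil ct \rceil) \to q_\infty := 2 \delta / (c \eta)$ and $K$ is the (continuous) Bernoulli rate function, satisfying $K(0^+) = -\log(1 - p)$.

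It remains to coordinate the parameter choices so the overall rate exceeds $I(\theta)$. I would fix them in the order: first $p_0 \in (0,1)$ with $-c \log(1 - p_0) > I(\theta) + 1$; then $\eta > 0$ so that $p := \bbP(\t_1 \geq \eta) > p_0$; then $C$ large enough that $\rho(C) > I(\theta) + 1$; finally $\delta_* = \delta_*(\theta, c) > 0$ small enough that for every $\delta \in (0, \delta_*]$ both $q_\infty < p$ and $c K(q_\infty) > I(\theta) + \tfrac{1}{2}$, which is possible by continuity of $K$ at $0$. Merging the two tail contributions with the lower bound on $\bbP(A_t)$ then yields $\bbP(B_t^c) / \bbP(A_t) \leq e^{-t/4}$ for $t$ large. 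The principal obstacle is exactly this coordination: one must beat the \emph{specific} value $I(\theta)$, which forces $p$ close to $1$ so that $-\log(1-p) > I(\theta)/c$, \emph{and only then} $\delta$ small enough to keep $q_\infty < p$ while $K(q_\infty)$ stays close to $-\log(1-p)$.
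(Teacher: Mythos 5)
Your proposal is correct and follows essentially the same route as the paper: reduce $B_t^c$ to the event that roughly $ct$ consecutive $\t_i$'s sum to at most $2\d t$, pass to the Bernoulli indicators $r_i=\mathds{1}(\t_i\geq \eta)$, apply Cram\'er's theorem, and tune first $\eta$ (so that $-c\log(1-p)$ exceeds $I(\th)$) and only afterwards $\d$. The single point of divergence is the treatment of the random index $\nu(t-\d t)$: the paper partitions on its value $j$ and uses the independence of $\{\nu(t-\d t)=j\}$ from $\t_{j+2},\t_{j+3},\dots$ to collapse the sum into one probability with no loss, whereas you truncate on $\{\nu(t+\d t)\leq Ct\}$ via an auxiliary Chernoff bound and pay a harmless polynomial prefactor in a union bound over locations --- slightly longer, but equally valid.
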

\begin{proof}
We restrict to the case $\th>0$, being the proof for $\th <0$ similar. 
We observe that the event $B_t^c$ implies the event 
$$ \bigcup _{j= 0}^\infty 
\Big\{ \nu(t-\d t) = j \,,\;  
\sum _{\ell= 1} ^{\lceil  c  t\rceil -1} \t_{j+1+ \ell}  \leq 2 \d t
\Big\}\,.
$$Since the event $\{  \nu(t-\d t) = j  \}$ depends only on $\t_1, \t_2, \dots, \t_{j+1}$, by independence we  get
\begin{equation}\label{nutella}
\begin{split}
\bbP (B_t^c) &
\leq \sum  _{j= 0}^\infty \bbP ( \nu(t-\d t) = j) \bbP\Big(   
\sum _{\ell= 1} ^{\lceil  c  t\rceil-1 } \t_{j+1+ \ell}  \leq 2 \d t
\Big)
\\& 
= \bbP\Big(   
\sum _{\ell= 1} ^{\lceil  c  t\rceil-1 } \t_\ell  \leq 2 \d t
\Big)\leq   \bbP\Big(   
\sum _{\ell= 1} ^{\lceil  c  t\rceil-1 } r_\ell   \leq 2 \d t /\eta
\Big ) \,.
\end{split}
\end{equation}
Above we have used that $\t_\ell \geq \eta\, r_\ell$. Now we use Cram\'er Theorem for sums  of i.i.d. $p$--Bernoulli r.v.'s. The associated  rate function is given by (cf. exercise 2.2.23 in \cite{DZ})
$$ \cI _p (x) =\begin{cases}
x \log \frac{x}{p} + (1-x) \log \frac{1-x}{1-p} & \text{ if } x \in [0,1]\,,\\
+\infty & \text{ otherwise}\,,
\end{cases} $$
with the convention that $0 \log 0:=0$.
Trivially,  $\cI_p$ is strictly decreasing on $[0,p]$ and strictly increasing on $[p, 1]$, while $\cI_p(p)=0$.
%We restrict to  $c$ such that  $ 2(\eta c)^{-1} < p$.
Let $t_*:= \lceil  c  t\rceil -1$.
Writing 
$$ 
\frac{1}{t} \log \bbP\Big(   
\sum _{\ell= 1} ^{\lceil  c  t\rceil -1} r_\ell   \leq 2 \d t /\eta 
\Big )
 = \frac{ t_*}{t} \frac{1}{t_* }\log \bbP\Big(  
 \frac{1}{ t_* } \sum _{\ell= 1} ^{t_* } r_\ell   \leq \frac{2 \d t }{\eta t_*} 
\Big ) 
$$
and using  that   $2 \d t (\eta t_*)^{-1} \leq 3 \d 
 ( \eta c)^{-1}$ for $t$ large enough, 
 we get
\begin{equation}\label{marmellata}
\limsup _{t \to \infty} \frac{1}{t} \log \bbP\Big(   
\sum _{\ell= 1} ^{\lceil  c t\rceil -1} r_\ell   \leq 2 \d t /\eta
\Big)
 \leq -c    \inf _{ (-\infty,\frac{ 3\d}{\eta c} ]} \cI_p \,.
 \end{equation}
Now we have to choose  carefully the constants in order to win.  Fix $\th >0$ and $c \in (0,1)$. The function $\cI_p(0)=\log \frac{1}{1-p}$ is increasing in $p$ and $\lim _{p \to 1} \cI_p(0)=+\infty$. In particular, 
there exists 
 $p_0>0$ such that $\cI_p(0) >\th J_+(1/\th)/c $ for all $p \geq p_0$. 
 %Since $\t_i>0$ it holds$\lim _{\eta \searrow 0} \bbP ( \t_i \geq \eta)=1$. 
We fix $\eta$ such that $p:=  \bbP ( \t_i \geq \eta)>p_0$. 

If $p=1$  then $\t_i \geq \eta $ a.s. In particular, equation \eqref{nutella} gives
$ \bbP (B_t^c ) \leq \mathds{1} \big( ct - 1 \leq \frac{2 \d t }{\eta} \big) $, so by setting $\d_* = \eta c / 4$ we have that for any $\d \leq  \d_* $ and $t$ large enough $\bbP (B_t^c ) = 0$. This, combined with Lemma \ref{ananas}, gives the thesis. 

Assume, on the other hand, that   $p<1$. Recall that $\cI_p(0) >\th J_+(1/\th)/c $. Since $\lim_{ \e \searrow 0} \cI_p(\e)= \cI_p(0)$ and $\cI_p$ is decreasing near $0$, we can fix $\e_0>0$ such that $\cI_p(\e) >\th J_+(1/\th)/c$ for all $\e \in [0, \e_0]$.  Note that the (now fixed) constants $\eta, p , \e_0$ depend only on  $\th,c$.  To conclude let $\d_*= (\eta c \e_0/4)\wedge 1 $. Then for each $\d \in (0,\d_*]$ we have $3\d(\eta c)^{-1} \leq \e_0$ and therefore the last term in \eqref{marmellata} is  strictly bounded from above by  $-\th J_+(1/\th)$. 
  Coming back to \eqref{nutella} and  \eqref{marmellata} we conclude that 
\begin{equation}
\limsup_{t \to \infty} \frac{1}{t} \log \bbP(B_t^c) < - \th J_+(1/\th)\,.
\end{equation} 
The above bound together with Lemma \ref{ananas} implies the thesis.
\end{proof}

Combining Lemmas \ref{ananas} and \ref{pesca} we can prove  the following key lower bound:
\begin{Lemma}\label{pizzetta}
For any $\th \not =0$   and $\e \in (0,1/2)$ the following holds
	\begin{equation}\label{liminf}
	 \liminf_{t\to \infty} \frac{1}{t} \log \bbP \Big( \frac{Z_t}{t} \in (\th -\e  , \th  + \e) \Big) 
	\geq - I(\th)   \, . 
	\end{equation}
\end{Lemma}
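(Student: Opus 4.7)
The plan is to combine the three preparatory lemmas \ref{theta>0}, \ref{ananas} and \ref{pesca} in a straightforward way. We may assume $I(\th) < \infty$, otherwise the inequality is trivial.

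Fix $\th \neq 0$ and $\e \in (0,1/2)$. First, I choose the parameter $c$ that appears in the definition of $B_t$ to match the target window: set $c := \e/2$, so that Lemma \ref{theta>0} gives the inclusion
\[
A_t(\d,\th) \cap B_t(\d,c) \subseteq \{ Z_t/t \in (\th-\e,\th+\e)\}
\]
for all $t$ larger than some $\tilde t$, and for every $\d \in (0,1)$. Next, with this $c$ fixed, I pick $\d \in (0,\d_*(\th,c)]$, where $\d_*(\th,c)$ is the constant provided by Lemma \ref{pesca}. This ensures
\[
\lim_{t\to\infty} \frac{\bbP(B_t^c)}{\bbP(A_t)} = 0,
\]
so the event $B_t$ is asymptotically ``free of cost'' relative to $A_t$.

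The key estimate then proceeds by writing
\[
\bbP\bigl( Z_t/t \in (\th-\e,\th+\e)\bigr) \;\geq\; \bbP(A_t\cap B_t) \;=\; \bbP(A_t)\left(1 - \frac{\bbP(A_t\cap B_t^c)}{\bbP(A_t)}\right) \;\geq\; \bbP(A_t)\left(1 - \frac{\bbP(B_t^c)}{\bbP(A_t)}\right).
\]
By the choice of $\d$ above, for $t$ sufficiently large the parenthesis is at least $1/2$. Taking logarithms, dividing by $t$ and passing to the liminf yields
\[
\liminf_{t\to\infty} \frac{1}{t}\log \bbP\bigl( Z_t/t \in (\th-\e,\th+\e)\bigr) \;\geq\; \liminf_{t\to\infty} \frac{1}{t}\log \bbP(A_t),
\]
because the additive $\frac{1}{t}\log(1/2)$ vanishes. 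By Lemma \ref{ananas}, the right-hand side is bounded below by $-I(\th)$, which is exactly \eqref{liminf}.

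The only subtle point is the coordination of the constants: one must first fix $c=\e/2$ (so that the inclusion in Lemma \ref{theta>0} lands inside the target window), and only then choose $\d$ small enough so that Lemma \ref{pesca} applies with this specific $c$. In particular, both the lower bound on $\bbP(A_t)$ (rate $-I(\th)$) and the upper bound on $\bbP(B_t^c)$ (rate strictly smaller than $-I(\th)$) depend on $\d$, but the quantifier order in the lemmas is compatible: $\d_*$ is allowed to depend on $\th$ and $c$, and Lemma \ref{ananas} holds for every $\d \in (0,1)$. This is the only mild bookkeeping issue; the rest is mechanical.
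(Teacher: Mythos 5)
Your proposal is correct and follows essentially the same route as the paper: fix $c=\e/2$ so that Lemma \ref{theta>0} places $A_t\cap B_t$ inside the target window, choose $\d\le\d_*(\th,c)$ from Lemma \ref{pesca} so that $\bbP(B_t^c)/\bbP(A_t)\to 0$, and conclude via $\bbP(A_t\cap B_t)\ge\bbP(A_t)\bigl(1-\bbP(B_t^c)/\bbP(A_t)\bigr)$ together with Lemma \ref{ananas}. The quantifier-order remark is exactly the right bookkeeping point and matches the paper's choice of constants.
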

\begin{proof}
Given $\e>0$,  take   $c:=\e/2$ and $\d:= \d_*( \th, c)$  in the definition of $A_t,B_t$ given in Lemma \ref{theta>0},   where the constant $\d_*$ is as in Lemma \ref{pesca}.
 Due to Lemma \ref{theta>0} 
  for $t$ large enough we have 
	\[ \bbP \Big( \frac{Z_t}{t} \in (\th -\e  , \th  + \e ) \Big) 
	\geq \bbP (A_t \cap B_t ) \geq \bbP (A_t)-\bbP (B_t^c) 
	= \bbP(A_t) \Big( 1 - \frac{ \bbP (B_t^c )}{\bbP (A_t)} \Big) \]
which implies 
	\[ \frac{1}{t} \log \bbP \Big( \frac{Z_t}{t} \in (\th -\e , \th  + \e ) \Big) 
	\geq \frac{1}{t} \log \bbP (A_t) + \frac{1}{t} \log \Big( 1 - \frac{ \bbP (B_t^c )}{\bbP (A_t)} \Big) \, . \]
	Using Lemma \ref{ananas} to control the first term in the r.h.s. and Lemma \ref{pesca} to control the second term in the r.h.s. we get the thesis.	\end{proof}

Being \eqref{liminf} uniform in $\e \in (0 , 1/2 )$, one can let $\e \to 0$ to conclude that the lower bound  \eqref{LB} holds for all $\th \neq 0$.
If $\th = 0$, take any $\e >0$ and let $u = \e /2$. Then by Lemma \ref{pizzetta} one has
	\[   \liminf_{t \to \infty} \frac{1}{t} \log \bbP \Big( \frac{Z_t}{t} \in (-\e  ,   + \e ) \Big) 
	\geq  \liminf_{t \to \infty} \frac{1}{t} \log \bbP \Big( \frac{Z_t}{t} \in ( u -\e /4  , u  + \e /4 ) \Big) 
	\geq -I(u) \, . \]
Letting  then $\e \to  0$ and therefore $u \to 0$ gives (recall Theorems \ref{I_study}, \ref{acqua}) %kuka
	\[ \lim_{\e \to 0 } \liminf_{t \to \infty} \frac{1}{t} \log \bbP \Big( \frac{Z_t}{t} \in (-\e  ,    \e ) \Big) 
	\geq - \lim_{u \to 0} I(u) = -I(0) \, . \]
This concludes the proof of  \eqref{LB} for all $\th \in \bbR$.

%%%%%%%%%%%%%%%%%%%%%%%%%%%%%%%%%%%%%%%%%%%%%%
%%%%%%%%%%%%%%%%%%%%%%%%%%%%%%%%%%%%%%%%%%%%%%
%%%%%%%%%%%%%%%%%%%%%%%%%%%%%%%%%%%%%%%%%%%%%%
%
%
%
%          UPPER BOUND
%
%
%%%%%%%%%%%%%%%%%%%%%%%%%%%%%%%%%%%%%%%%%%%%%%
%%%%%%%%%%%%%%%%%%%%%%%%%%%%%%%%%%%%%%%%%%%%%%
%%%%%%%%%%%%%%%%%%%%%%%%%%%%%%%%%%%%%%%%%%%%%%
\subsection{The upper bound}
We now move to the proof of \eqref{UB}. This is rather easy if the asymptotic velocity $v$ vanishes.

\begin{Lemma}
If $v=0$ then \eqref{UB} holds for all $\th \in \bbR$.
\end{Lemma}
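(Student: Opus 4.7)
The plan is to split by the sign of $\th$ and exploit that, under the hypothesis $v=0$, Proposition~\ref{J_study}--(iv) forces $\l_c=0$ and hence $J_\pm$ to be strictly decreasing on its whole support $(\a_\pm,+\infty)$. First, since $I(0)=\l_c=0$ by Lemma~\ref{biancaneve}, the case $\th=0$ is immediate: probabilities are bounded by $1$, so
\[ \limsup_{t\to\infty}\frac{1}{t}\log\bbP\Big(\frac{Z_t}{t}\in(-\e,\e)\Big)\leq 0=-I(0), \]
and the $\e\to 0$ passage is trivial.

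For $\th>0$ the key idea is to dominate the narrow-window event by a one-sided event encoded by a hitting time. Since $w_i\in\{-1,+1\}$ and $Z_0=0$, the process $Z$ has nearest-neighbour jumps on $\bbZ$, so for $n_t:=\lceil(\th-\e)t\rceil+1$ one has the inclusion $\{Z_t>(\th-\e)t\}\subseteq\{Z_t\geq n_t\}\subseteq\{T_{n_t}\leq t\}$, and therefore
\[ \bbP\Big(\frac{Z_t}{t}\in(\th-\e,\th+\e)\Big)\leq \bbP\Big(\frac{T_{n_t}}{n_t}\leq \frac{1}{\th-\e}\Big) \]
for all $\e\in(0,\th)$ and $t$ large. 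I would then apply the LDP for $T_n/n$ (Theorem~\ref{teo2}--(i)) along the sequence $n_t\to\infty$ to the closed set $(-\infty,1/(\th-\e)]$, and use $n_t/t\to\th-\e$ to obtain
\[ \limsup_{t\to\infty}\frac{1}{t}\log\bbP\Big(\frac{Z_t}{t}\in(\th-\e,\th+\e)\Big)\leq -(\th-\e)\inf_{u\leq 1/(\th-\e)}J_+(u). \]
Because $\l_c=0$, strict monotonicity of $J_+$ on $(\a_+,+\infty)$ together with the fact that $J_+\equiv+\infty$ on $(-\infty,\a_+)$ makes this infimum equal to $J_+(1/(\th-\e))$ whenever $1/(\th-\e)>\a_+$ and $+\infty$ otherwise. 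Letting $\e\to 0$, the smoothness of $J_+$ on $(\a_+,+\infty)$ and the right-continuity at $\a_+$ from Proposition~\ref{J_study}--(v) yield the desired bound $-\th J_+(1/\th)=-I(\th)$ for $\th\leq 1/\a_+$; for $\th>1/\a_+$ the right-hand side of the previous display is already $-\infty=-I(\th)$.

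The case $\th<0$ is the mirror argument based on $\{Z_t<(\th+\e)t\}\subseteq\{T_{-n_t}\leq t\}$ with $n_t:=\lceil -(\th+\e)t\rceil+1$, combined with the LDP for $T_{-n}/n$ and rate function $J_-$ (again strictly decreasing because $\l_c=0$). The crucial place where the hypothesis $v=0$ enters is the monotonicity step that lets me replace the infimum over $u\leq 1/(\th-\e)$ by its value at the right endpoint; in the general situation $v\neq 0$ that infimum would be attained at the interior minimiser $\th_c^\pm$ of $J_\pm$, producing the too-weak bound $-(\th-\e)J_\pm(\th_c^\pm)$ (in fact $0$ whenever $\th$ lies on the same side of $0$ as $v$). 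The subsequent lemmas of the section presumably address the general case by simultaneously controlling the ``$Z_t$ on the opposite side'' event, but here the strict monotonicity of $J_\pm$ makes the one-sided domination sufficient and the argument essentially routine; the only remaining bookkeeping is checking the continuity of $\th\mapsto\th J_\pm(1/\th)$ at the boundary points $\th=\pm1/\a_\pm$, which is provided by Proposition~\ref{J_study}--(v).
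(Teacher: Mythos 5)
Your argument is correct and follows essentially the same route as the paper: dominate the window event by the one--sided event $\{Z_t>(\th-\e)t\}\subseteq\{T_{n_t}\le t\}$, apply the hitting--time LDP, and use that $v=0$ forces $\l_c=0$, hence $J_\pm$ decreasing, so the infimum sits at the right endpoint (the paper treats $\th=0$ via the LLN, but your trivial bound by $1$ together with $I(0)=\l_c=0$ is equally valid). The one slip is the choice $n_t=\lceil(\th-\e)t\rceil+1$: the claimed inclusion $\{Z_t>(\th-\e)t\}\subseteq\{Z_t\ge n_t\}$ fails when $(\th-\e)t\notin\bbZ$ and $Z_t=\lceil(\th-\e)t\rceil$; taking $n_t=\lceil(\th-\e)t\rceil$ instead restores the inclusion while keeping $t/n_t\le 1/(\th-\e)$, and indeed then dispenses with the auxiliary $\tilde\e$ that the paper needs with its choice $\lfloor(\th-\e)t\rfloor$.
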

\begin{proof}
If $\th =0$ it is enough to observe that by the LLN $Z_t / t \to 0$ almost surely as $t \to \infty$, and therefore in probability. Since $I(0)=0$ by Theorems \ref{I_study}, \ref{acqua}, % kuka
we get the thesis.

%Hence for any $\e >0$ we have
%	\[ \limsup_{t \to \infty} \frac{1}{t} \log \bbP \bigg( \frac{Z_t}{t} \in (-\e  ,   \e ) \bigg) = 0 = - I(0) \]
%and we are done.

To deal with the case $\th \neq 0$, recall that $v =0 \Leftrightarrow \lambda_c =0 \Leftrightarrow J_\pm$ are strictly decreasing on $(\a_\pm , \infty )$ (see Prop. \ref{J_study}). Since $J_+ \equiv+\infty$ on $(-\infty, \a_+)$ and due to \eqref{alpha} we conclude that $J_+: \bbR \to [0,+\infty]$ is a decreasing extended function.  Fix now any $\th >0$ and $\e >0$ such that $\th - \e >0$.
Then, given any $\tilde \e>0$, for $t$ large it holds 
	\[ \begin{split}
	\bbP \Big( \frac{Z_t}{t} \in ( \th - \e  , \th  + \e ) \Big) &
	 \leq \bbP \Big( \frac{Z_t}{t} >\th  - \e \Big) 
	 \leq \bbP ( Z_t \geq  \lfloor (\th  - \e )t \rfloor ) \\
	& \leq \bbP ( T_{\lfloor (\th  - \e )t \rfloor} \leq t ) 
	= \bbP \Big( \frac{T_{\lfloor (\th  - \e )t \rfloor} }{\lfloor (\th  - \e )t \rfloor} \leq \frac{1}{\th - \e } +\tilde{\e} \Big) \,.
	\end{split} \]
%where $\tilde{\e} >0$ and $t$ is large enough so that $\ds \frac{t}{\lfloor (\th  - \e )t \rfloor} \leq \frac{1}{\th - \e } +\tilde{\e}$.
Hence, using the LDP for the hitting times $T_n$ and the fact that $J_+$ is decreasing,
	\begin{equation}\label{fuochetto} \begin{split}
	\limsup_{t\to\infty} \frac{1}{t} \log\,  & \bbP \Big( \frac{Z_t}{t} \in (\th -\e  , \th  + \e ) \Big) 
	  \\
	% & \leq \limsup_{t\to\infty} \Big( \frac{\lfloor (\th  - \e )t \rfloor}{t} \Big) 
	% \bigg( \frac{1}{\lfloor (\th  - \e )t \rfloor} \log \bbP \Big( \frac{T_{\lfloor (\th  - \e )t \rfloor} }{\lfloor (\th  - \e  )t \rfloor} \leq \frac{1}{\th - \e } +\tilde{\e} \bigg)\Big)  \\
	& \leq (\th -\e)	\limsup_{n\to\infty} \frac{1}{n} \log \bbP \bigg( \frac{T_n}{n} \leq \frac{1}{\th - \e } +\tilde{\e} \bigg) \\
	& \leq - (\th -\e ) \inf_{\big( -\infty , \frac{1}{\th - \e } +\tilde{\e} \big) } J_+ 
	= - (\th -\e ) J_+ \Big( \frac{1}{\th - \e } +\tilde{\e} \Big) \, . 
	\end{split} \end{equation}
  Letting $ \tilde{\e} \to 0$ and using that $J_+$ is l.s.c. (see Prop. \ref{J_study})  we get 
that the first member of \eqref{fuochetto} is bounded from above by $ - (\th -\e ) J_+ \bigl( 1/(\th - \e ) \bigr) $. Taking now  the limit $\e \to 0$ and using again that $J_+$ is l.s.c. we get the thesis 
 for $\th >0$. The proof of \eqref{UB}   for $\th <0$ follows by 
  similar arguments.
\end{proof}

We now prove that \eqref{UB} holds for all $\th \in \bbR$ assuming $v>0$. The case $v<0$ can be addressed in the same way. The proof we present is based on a method introduced in \cite{DGZ}, that we re--adapt to our setting.  The strategy consists in reducing the problem to proving the following:

\begin{Proposition} \label{cuore}
Assume $v>0$  and define   $S_t :=\inf \{ s\geq t : Z_s \leq 0 \}$.  Then it holds
	\begin{equation} \label{cuore1}
	\limsup_{t\to\infty} \frac{1}{t} \log \bbP ( S_t < \infty ) \leq - I(0) \, . 
	\end{equation}
\end{Proposition}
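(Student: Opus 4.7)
The plan is to reduce the event $\{S_t<\infty\}$ to a statement about the skeleton walk $(W_m)$ and the clock $(\cT_m)$. Since $Z_s=W_{\nu(s)}$ is constant on each interval $[\cT_m,\cT_{m+1})$, one has $\{S_t<\infty\}=\{\exists m\ge 0:\,W_m\le 0,\ \cT_{m+1}>t\}$. Because $v>0$ forces $W_m\to+\infty$ a.s., the index $M:=\max\{m\ge 0:\,W_m\le 0\}$ is a.s.\ finite and the event reduces to $\{\cT_{M+1}>t\}$. Since the increments $w_i\in\{-1,+1\}$ and $W_{M+1}>0\ge W_M$, necessarily $W_M=0$ and $w_{M+1}=+1$.

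I then decompose $\bbP(S_t<\infty)=\sum_{k\ge 0}\bbP(M=k,\cT_{k+1}>t)$. On $\{M=k\}$ the ``post-$(k+1)$'' walk with increments $w_{k+2},w_{k+3},\ldots$ must have all partial sums $\ge 0$; since the pairs $(w_i,\t_i)$ are i.i.d., this factor is independent of $(w_i,\t_i)_{i\le k+1}$ and contributes the constant $q_+:=\bbP(T_{-1}=+\infty)$, which is positive because $v>0$. Applying Markov's inequality with any $\l\in[0,\l_c)$ and using the independence of the $(w_i,\t_i)$ across $i$ to sum over sign patterns $(\sigma_1,\ldots,\sigma_k)$ with $\sum_i\sigma_i=0$, one obtains
\[
\bbP(M=k,\cT_{k+1}>t)\le q_+\,e^{-\l t}\,f_+(\l)\,\binom{k}{k/2}\bigl(f_+(\l)f_-(\l)\bigr)^{k/2}
\]
for even $k$, while the bound is $0$ for odd $k$.

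Summing over $k=2j$ and using the classical generating identity $\sum_{j\ge 0}\binom{2j}{j}x^j=(1-4x)^{-1/2}$, valid for $|x|<1/4$ and hence for $\l<\l_c$ by the very definition of $\l_c$, one obtains
\[
\bbP(S_t<\infty)\le \frac{q_+\,f_+(\l)}{\sqrt{1-4f_+(\l)f_-(\l)}}\,e^{-\l t}.
\]
Thus $\limsup_{t\to\infty}\tfrac1t\log\bbP(S_t<\infty)\le-\l$ for every $\l<\l_c$, and letting $\l\nearrow\l_c$, combined with $I(0)=\l_c$ from Lemma~\ref{biancaneve}, yields \eqref{cuore1}. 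The delicate point is that at $\l=\l_c$ the generating series diverges (since $\binom{2j}{j}4^{-j}\sim(\pi j)^{-1/2}$ is not summable), so one cannot plug in $\l_c$ directly; the sharp exponent is recovered only in the limit order $\lim_{\l\nearrow\l_c}\limsup_t$, not the reverse.
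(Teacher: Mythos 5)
Your proof is correct, and it takes a genuinely different --- and substantially more elementary --- route than the paper's. The identity $\{S_t<\infty\}=\{\exists m\ge 0:\,W_m\le 0,\ \cT_{m+1}>t\}$ is exact (since $m\ge\nu(t)$ iff $\cT_{m+1}>t$), the reduction to the last non-positive index $M$ (which the nearest-neighbour structure forces to satisfy $W_M=0$ and $w_{M+1}=+1$) is sound, and the Chernoff bound combined with the enumeration of balanced sign patterns and the identity $\sum_{j\ge 0}\binom{2j}{j}x^j=(1-4x)^{-1/2}$ yields the explicit non-asymptotic estimate $\bbP(S_t<\infty)\le C(\l)\,e^{-\l t}$ for every $\l\in[0,\l_c)$; since $v>0$ guarantees $\l_c>0$ (Proposition \ref{J_study}) and Lemma \ref{biancaneve} gives $I(0)=\l_c$, letting $\l\nearrow\l_c$ finishes the argument. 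The paper instead follows the scheme of \cite{DGZ}: it splits according to whether $T_{\lfloor tu\rfloor}$ is reached before time $t$, whether the subsequent return to $0$ takes longer than $ct$, and otherwise localizes both times on a grid of mesh $1/m$, arriving at a self-referential inequality for $b_t=\bbP(S_t<\infty)$ that is resolved by a $\limsup$ argument and then the limits $m\to\infty$, $u\to 0$. That machinery is built to survive in settings (random environments, no closed form for $\varphi_\pm$) where your summation is unavailable; in the present i.i.d.\ setting your computation exploits exactly the same Catalan-type algebra that underlies \eqref{cena} and is a genuine simplification, giving in addition an effective prefactor. Your closing caveat about the order of limits (the series diverges at $\l=\l_c$, so the sharp exponent is only obtained in the limit $\l\nearrow\l_c$) is the right one and is handled correctly.
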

The fact that the above result implies that \eqref{UB} holds for all $\th \in \bbR$ can be seen reasoning as in \cite{DGZ}, page 1017, with minor modifications. 
For completeness we give a sketch of the proof in Appendix \ref{reduction}.

A detailed proof of Proposition \ref{cuore} is, on the other hand, given below.
This choice is due to the presence of a small gap \cite{G}  in the proof presented in \cite{DGZ} (see formula (4.14) on page 1020 there), and to the fact that some additional arguments are necessary since 
 our holding times can be in general  arbitrarily small while in \cite{DGZ} they are bounded from below by $ 1$.

\begin{proof}[Proof of Proposition \ref{cuore}] 
Due to   Prop. \ref{J_study}, since $v>0$,  $\lambda_c >0$ and the critical point $\th_c^\pm$ of $J_\pm$ is finite and positive. 
Take any $ u \in ( 0 , 1 / \th_c^+ ) $ and fix $c>1$ integer such that $c/u> \th_c^-$.
Let, in order to simplify the notation, $b_t := \bbP (S_t < \infty )$ with the convention that $b_t = 1$ if $t < 0$. 
Recall that  $T_{\lfloor tu\rfloor }$ is the hitting time of  $\lfloor tu\rfloor $, and define 
	\[ \tilde{T}_0 := 
	\begin{cases}
	\inf \{ s \geq T_{\lfloor tu\rfloor } : Z_s = 0 \}, \;  \mbox{ if } T_{\lfloor tu\rfloor } < \infty \,,\\
	\qquad \qquad + \infty  \qquad \qquad  \mbox{ otherwise}\,.
	\end{cases} \]
Then we have
	\begin{equation} \label{inizio} 
	\begin{split} 
	b_t  \leq & \, \bbP (T_{\lfloor tu\rfloor } \geq t ) + \bbP ( T_{\lfloor tu\rfloor } <t , \, \tilde{T}_0 - T_{\lfloor tu\rfloor } \geq ct , \,  S_t < \infty ) \\
	& 
	+ \bbP ( T_{\lfloor tu\rfloor }< t , \, \tilde{T}_0 - T_{\lfloor tu\rfloor }< ct ,\,  S_t < \infty ) \, . 
	\end{split}
	\end{equation}
For the first term in the r.h.s. of \eqref{inizio}  the LDP for the hitting times $T_n$, $n \to \infty$,  implies that 
	\begin{equation}\label{siberia1} \limsup_{t \to \infty } \frac{1}{t} \log \bbP \bigg( \frac{T_{\lfloor tu\rfloor }}{   \lfloor tu\rfloor }
	 \geq \frac{t }{\lfloor tu\rfloor } \bigg) \leq\limsup_{t \to \infty } \frac{1}{t} \log \bbP \bigg( \frac{T_{\lfloor tu\rfloor }}{   \lfloor tu\rfloor } \geq \frac{1}{ u} \bigg) 
	  \leq  -u J_+(1/u) = -I(u) \, . \end{equation}
	Above we have used that $J_+$ is  increasing   on $( \th_c^+, +\infty)$.

For the second term we apply the strong Markov property at time $T _{\lfloor tu\rfloor }$ (cf. Definition \ref{corpo1}--(iv)) to get
	\[ \bbP ( T_{\lfloor tu\rfloor }<t , \, \tilde{T}_0 - T_{\lfloor tu\rfloor } \geq ct , \,  S_t < \infty ) \leq \bbP ( T_{-\lfloor tu\rfloor } \geq ct ) \, . \]
Therefore, by the  LDP for the hitting times $T_{-n}$, $n \to \infty$, and the fact that $J_-$ is increasing on $(\th _c^-,+\infty)$, we obtain 
	\begin{equation}\label{siberia2}
	\begin{split} 
	\limsup_{t \to \infty} \frac{1}{t} \log \bbP 
	( T_{\lfloor tu\rfloor }<t , \, \tilde{T}_0 - T_{\lfloor tu\rfloor } \geq ct , \,  S_t < \infty ) 
	&\leq \limsup_{t \to \infty} \frac{1}{t} \log \bbP \bigg( \frac{T_{-  
	\lfloor tu\rfloor  
	}}{\lfloor tu \rfloor } \geq \frac{c}{u} \bigg) \\
	& \leq - u J_- ( c/u) = -c I(-u/c) \, . 
	\end{split} \end{equation}
For the third term in the r.h.s. of \eqref{inizio} one has to deal with the critical points of $J_\pm$, so the idea is to localize things.
 Fix $m \in \bbN$ positive. Fix $0<u'<u$, hence $1/ \lfloor tu \rfloor \leq 1/ t u'$ for $t$ large (as we assume). We take  $u'$ very near to $u$  such  that $1/u' > \th_c^+$ and $c/u'> 1/ \th _c^-$.
 Then
	\begin{equation}\label{siberia3} \begin{split}
	\bbP \bigg(  \frac{T_{\lfloor tu\rfloor }}{ \lfloor tu \rfloor  } & < \frac{1}{u'} , \,  \frac{\tilde{T}_0 - T_{\lfloor tu \rfloor }}{\lfloor tu \rfloor } < \frac{c}{u'} , \,  S_t < \infty \bigg) \\
	& =
	\sum_{k = 1}^m \sum_{\ell = 1}^{mc} \bbP \bigg( \frac{T_{ \lfloor tu \rfloor} }{\lfloor tu \rfloor }  \in \bigg[ \frac{k-1}{mu'} , \frac{k}{mu'} \bigg) , \, 
	\frac{\tilde{T}_0 - T_{\lfloor tu \rfloor }}{\lfloor tu \rfloor }  \in \bigg[ \frac{\ell -1}{mu'} , \frac{\ell }{mu'} \bigg) , \, 
	S_{t } < \infty \bigg) \\
	& \leq \sum_{k = 1}^m \sum_{\ell = 1}^{mc} \bbP \bigg( \frac{T_{\lfloor tu\rfloor }}{ \lfloor tu \rfloor  }  \in \bigg[ \frac{k-1}{mu'} , \frac{k}{mu'} \bigg] \bigg) 
	\bbP \bigg( 
	\frac{ T_{-\lfloor tu \rfloor }}{\lfloor tu \rfloor }  \in \bigg[ \frac{\ell -1}{mu'} , \frac{\ell }{mu'}\bigg]  \bigg) 
	b_{t - \frac{(k+\ell )t}{m} } 
	\end{split} \end{equation}
where we have applied the strong Markov property at times $T_{\lfloor tu \rfloor }$ and $\tilde{T}_0$ and used that if $s \leq  t$ then $b_s \geq  b_t$ since $S_s \leq S_t$.
Now we analyze each term separately.
Define
	\begin{equation} \label{oscillazioni} 
	\begin{split}
	 w_+ ( r , \d ) & := \max \{ | J_+ (s) - J_+ (t) | : s,t \in [ \th_c^+ , r ] , |s-t| \leq \d \} \,, \\
	w_- ( r , \d ) & := \max \{ | J_- (s) - J_- (t) | : s,t \in [ \th_c^- , r ] , |s-t| \leq \d \} \, ,
	\end{split} 
	\end{equation}
	with the convention that  $ w_\pm ( r , \d )=0$ if $ r < \th_c^\pm$.
The LDP for the hitting times $T_n$ then gives
%and the fact that $J_+$ is decreasing on $(-\infty, \th _c^+)$ and increasing on $(\th_c^+, +\infty)$ then give 
	%\begin{equation}
	%\begin{split}
\begin{multline}
	\limsup_{t\to\infty } \frac{1}{t} \log   \bbP  \Big( \frac{T_{\lfloor tu \rfloor }}{\lfloor tu \rfloor }  \in \Big[ \frac{k-1}{mu'} , \frac{k}{mu'} \Big] \Big) 
	%&
	 \leq - u \inf_{\big[ \frac{k-1}{mu'} , \frac{k}{mu'} \big]} J_+ 
\\	\leq  - u J_+ ( k / mu' ) +u w_+ \big( \frac{k}{mu'} , \frac{1}{mu'} \big) 
% \\ & 
= - \frac{k}{m} I \big( \frac{u'm}{k} \big) + u w_+ \big( \frac{k}{mu'} , \frac{1}{mu'} \big) \, . 
\label{hot_wheels_1}	
%	\end{split} \end{equation}
\end{multline}
Similarly we get 
	%\begin{equation}
	\begin{multline}
	\label{hot_wheels_2}
%&	\begin{split}
	\limsup_{t\to\infty } \frac{1}{t} \log   \bbP  \Big( \frac{T_{-\lfloor tu \rfloor }}{\lfloor tu \rfloor }  \in \Big[ \frac{\ell -1}{mu'} , \frac{\ell }{mu'} \Big] \Big) 
%&
  \leq - u \inf_{\big[ \frac{\ell -1}{mu'} , \frac{\ell }{mu'} \big]} J_- \\
  \leq  - u J_- ( \ell / mu' ) +u w_- \big( \frac{\ell}{mu'} , \frac{1}{mu'} \big)   = - \frac{\ell}{m} I \big( -\frac{um'}{\ell} \big) + u w_- \big( \frac{\ell }{mu'} , \frac{1}{mu'} \big)\, . 
\end{multline}%	\end{split} \end{equation}
	We set 
	\begin{align*}
	&
	W_{k,\ell}:=w_+ \big( \frac{k}{mu'} , \frac{1}{mu'} \big) + w_- \big( \frac{\ell }{mu'} , \frac{1}{mu'}  \big)\,,\\
	&W:= \max \Big\{ w_+ \bigl( \frac{1}{u'} , \frac{1}{mu'} \bigr) \,, \,  w_- \Big( \frac{c}{u'} , \frac{1}{mu'} \Big)\Big \}
	\end{align*}
		 The above   inequalities \eqref{hot_wheels_1} and \eqref{hot_wheels_2}, and the convexity of $I$, we have  for any $\e > 0$ and $t$ large enough that 
	\begin{equation}\label{siberia4} \begin{split}
	\bbP \bigg( \frac{T_{\lfloor tu\rfloor }}{ \lfloor tu \rfloor  }  \in \bigg[ \frac{k-1}{mu'} , \frac{k}{mu'} \bigg] \bigg) &
	\bbP \bigg( 
	\frac{ T_{-\lfloor tu \rfloor }}{\lfloor tu \rfloor }  \in \bigg[ \frac{\ell -1}{mu'} , \frac{\ell }{mu'} \bigg]  \bigg) \\
	% & \leq  e^{ - t \frac{k}{m} I \big( \frac{u'm}{k} \big) + u t w_+ \big( \frac{k}{mu'} , \frac{1}{mu'} \big)   + t\e }
%	 e^{ -t \frac{\ell}{m} I \big( -\frac{u'm}{\ell} \big) + u' t w_- \big( \frac{\ell }{mu'} , \frac{1}{mu'} \big) + t \e }\\
	 & \leq e^{t\e + u'tW_{k,\ell} }
	 e^{-t \big[ \frac{k}{m} I \big( \frac{u'm}{k} \big)  + \frac{\ell}{m} I \big( -\frac{u'm}{\ell} \big) \big] }\\
	 & \leq e^{t\e + u't W_{k,\ell}} e^{ -t \frac{(k+\ell)}{m} I(0)} \leq  e^{t\e  -t \frac{(k+\ell)}{m} (I(0)-\frac{W}{c_0})} \,,
	 \end{split} \end{equation}
where  $c_0:= \min \{ \th_c^+, \th_c^-\}$.	 We explain the last bound.
Note that $W_{k,\ell} =0$ if $ k \leq  \th_c^+ m u'$ and $\ell  \leq \th_c^- m u'$.  On the other hand,
 since $\frac{k}{mu'} \leq \frac{1}{u'}$ and $\frac{\ell}{mu'} \leq \frac{c}{u'}$, we have $W_{k, \ell} \leq W$. Hence it holds 
 \begin{equation}\label{bella_addormentata}
 u' W_{k,\ell} \leq u' W \mathds{1} ( k+\ell >c_0 m u')  \leq \frac{(k+\ell)}{c_0 m } W\,.
 \end{equation}

Let now \begin{equation}\label{carnia2}J := \min\left \{ I(u) , \, c I(-c/u) , \, I(0) \right \} -W/c_0  \,.\end{equation}
%Note that, calling $ x:= \limsup_{t \to \infty} \frac{1}{t} \log b_t  \in [-\infty,0]$, by taking $m$ large enough and $\e$ small we can assume that
%\begin{equation}\label{sofferenza}
Then putting \eqref{inizio}, \eqref{siberia1},  \eqref{siberia2},  \eqref{siberia3} and   \eqref{siberia4}    we have
	\[
	b_t   \leq e^{-t I(u) + t\e } +  e^{ -tc I(-u/c ) + t\e } + 
	\sum_{k=1}^m \sum_{\ell = 1}^{mc}  e^{t\e  -t \frac{(k+\ell)}{m} J}   b_{t - \frac{(k+\ell )t}{m}}
	\,.\]
	Note that 
	 if $k+\ell > m$, then 	$b_{t - \frac{(k+\ell )t}{m}} =1$.
	 Hence we get
	 \begin{equation}\label{trucco}b_t  \leq (2+ m^2 c) e^{- tJ+ t\e  }+e^{ t\e }
	 \sum_{
	 \substack{  (k,\ell) : 1\leq k \leq m , 
         \;1 \leq \ell \leq mc \\ k + \ell \leq m  }
}	e^{- t \frac{(k+\ell )}{m} J} b_{t - \frac{(k+\ell )t}{m}}\,.
	\end{equation}
 	\medskip

	Call $ x:= \limsup_{t \to \infty} \frac{1}{t} \log b_t  \in [-\infty,0]$. Since, given a finite family of functions $\{f_i(t) \}_{i \in I}$,  it holds $  \limsup_{t \to \infty} \frac{1}{t}\log\bigl( \sum _i f_i(t)\bigr) \leq \max _{i \in I}   \limsup_{t \to \infty} \frac{1}{t}\log\bigl( f_i(t)\bigr)  $ 	from \eqref{trucco} we get
	\[ x \leq  \e +  \max_{j: 2\leq j \leq m } \left\{ - \frac{jJ}{m}+
	 \left(1- \frac{j}{m} \right)x\right \}= x+ \e    -\min_{j: 2\leq j \leq m } \frac{j(J+x)}{m}\,.\]	
	 The above bound holds for any $\e>0$, hence we conclude that $0 \leq  -\min_{j: 2\leq j \leq m } \frac{j(J+x)}{m}$. This implies that $J+x \leq 0$, i.e.
	 	\begin{equation}\label{ercolino}
	 \limsup_{t \to \infty} \frac{1}{t} \log b_t \leq - J \,.
	 \end{equation}

Now, let $m \to \infty$ first, so that $W \to 0$ due to the fact that $J_\pm $ are even $C^1$ on $(\a_\pm, \infty)$ and $\a_\pm< \th _c ^\pm$ (see Prop. \ref{J_study} and recall  that $1/u' > \th_c^+$, $c/u'>  \th _c^-$). 
Now we let  $u \to 0$. By Theorems \ref{I_study}, \ref{acqua} and since  $v>0$, $ \min\left \{ I(u) , \, c I(-c/u) , \, I(0) \right \} $  converges to $I(0)$ as $u \to 0$. This leads to the thesis.
\end{proof}

%%%%%%%%%%%%%%%%%%%%%%%%%%%%%%%%%%%%%%%%%%%%%%%

\section{Proof of Theorem \ref{GC} (Gallavotti--Cohen type symmetry)} \label{secGC}
Due to the definition of $I$, $  I( \th)=I(-\th)+c \th$ for all $ \th \in \bbR $ if and only if 
\begin{equation} \label{casa2}
J_+( \th)=J_- (\th)+c  \,, \qquad \forall \th >0\,.
\end{equation}
We now prove that \eqref{casa2} and Item (iii) with $c=-\log C$ are equivalent.
 To this aim,  assume  that $\f_+(\l) = C \f_-(\l)$ for all $ \l \leq \l_c$ and some $C>0$. Then $\log \f_+(\l) = \log \f_-(\l) + \log C = \log \f_-(\l) -c$ for all $\l \in \bbR$. Hence, taking the Legendre transform and recalling the definition \eqref{pocoyo1} of $J_\pm$ as Legendre transform of $\log\varphi_\pm$,  we   get \eqref{casa2}.

%%%%%%%%%%%%%%%%%%%%
On the other hand suppose that \eqref{casa2} holds.   We claim that  $J_+(\th)=J_-(\th)+c$ also for  all $\th \leq 0$. Indeed, since $\a_\pm\geq 0$, the claim follows from Proposition \ref{J_study}--(ii) for $\th<0$. If $\a_+,\a_-$ are both  positive then
Proposition \ref{J_study}--(ii) implies the claim also for  $\th=0$. If $\a_+,\a_-$ are both zero, then \eqref{casa2} and the right continuity of $J_\pm$ at $\a_\pm$ (see Proposition \ref{J_study}--(v)) imply the claim for $\th=0$. 
We now show that $\a_-$ and $\a_+$ must be either both positive or both zero, thus concluding the proof of our claim.
Suppose for example that $\a_-=0$ and $\a_+>0$. Then we would have $J_+(\th)=\infty$ for $\th \in (0, \a_+)$  (by Proposition 
 \ref{J_study}--(ii)). This fact together with \eqref{casa2} implies that $J_-(\th)= +\infty$  for $\th \in (0,\a_+)$. Applying Proposition 
 \ref{J_study}--(ii) to $J_-$ we conclude that $\a_+ \leq \a_-$ thus getting a contradiction. 
 %%%%%%%%%%%%%%%%%%%%%%%%%%%%%%%%%%%%%%%%%%%%%%%%%

  Due to \eqref{casa2} and the above claim  we conclude that $J_+(\th)= J_-(\th)+c$ for all $\th \in \bbR$. $J_+(\th),\;J_-(\th)+c$ are the Legendre transforms of 
$\log \varphi_+,\log \varphi_--c$, respectively,  thought as   extended functions from $\bbR$ to $(-\infty,+\infty]$.   Due to Lemma \ref{silente}
 $\log \varphi_+,\log \varphi_--c$   are convex, l.s.c. and not  everywhere infinite.  Hence, by the 
Fenchel--Moreau Theorem (cf. \cite{Br}) we conclude that $\log \varphi_+=\log \varphi_--c$, i.e. Item (iii) holds with $c=-\log C$.

 We now prove that Item (ii) implies Item (iii).  To this aim assume that $\t_i $ and $w_i$ are independent. Then $f_+ (\l) = \bbE (e^{\l \t_i}) p$ and $f_- (\l) = \bbE (e^{\l \t_i}) q$ for all $\l \leq \l_c$. Combining this with \eqref{cena} we get
	\[ \frac{\f_+(\l)}{\f_-(\l)} = \frac{f_+(\l)}{f_-(\l)} = \frac{p}{q} =: C \,, \qquad \forall \l \leq \l_c \,,\]
which is Item (iii).

Finally we  prove that Item (iii) implies Item (ii). Hence 
assume $\f_+(\l) = C \f_-(\l) $ for all $\l \leq \l_c$.  By \eqref{cena} we have $ \frac{\f_+(\l)}{\f_-(\l)} = \frac{f_+(\l)}{f_-(\l)} = C$. Moreover, taking $\l=0$ in the previous identity, from the definition of $f_\pm$  we deduce that $C=p/q$.

In particular, 
given $\l, \g \leq 0$, we can write
%	\begin{equation*}
%	\bbE \big( e^{\l \t_i + \g w_i } \big) = \bbE \big( e^{\l \t_i} \big) \bbE \big( e^{\g w_i } \big) \qquad \forall \l , \g \in \bbR \, . 
%	\end{equation*} 
	\[ \begin{split}
	\bbE \big( e^{\l \t_i + \g w_i } \big) & 
	= \bbE \big( e^\g e^{\l \t_i  }  \mathds{1}(w_i =1)\big) + \bbE \big( e^{-\g} e^{\l \t_i  } \mathds{1}(w_i = -1)\big) \\
	&  = e^\g f_+(\l) + e^{-\g} f_-(\l) = f_+(\l) \bigg( e^\g + e^{-\g} \frac{q}{p} \bigg) \, . 
	\end{split} \]
On the other hand:
	\[ \bbE \big( e^{\l \t_i  } \big) = 
	\bbE \big( e^{\l \t_i  } \mathds{1}(w_i =1)\big) + \bbE \big( e^{\l \t_i  }  \mathds{1}(w_i =-1)\big)
	= f_+(\l) \bigg( 1 + \frac{q}{p} \bigg) = \frac{f_+(\l)}{p} \]
and
	\[ 	\bbE \big( e^{\g w_i  } \big) = e^\g p + e^{-\g} q = p \bigg( e^\g + e^{-\g} \frac{q}{p} \bigg) \, . \]
Putting all together, we conclude that 
	\[ \bbE \big( e^{\l \t_i + \g w_i } \big) = f_+(\l) \bigg( e^\g + e^{-\g} \frac{q}{p} \bigg) = 
	\bigg( \frac{f_+(\l)}{p} \bigg) \bigg(  p \big( e^\g + e^{-\g} \frac{q}{p} \big) \bigg) = 
	\bbE \big( e^{\l \t_i  } \big) \bbE \big( e^{\g w_i  } \big) \, ,\]
thus implying the independence of  $\t_i,w_i$.

%%%%%%%%%%%%%%%%%%%%%%%%%%%%%%%%%%%%%%%%%%

\section{Proof of Theorem \ref{GE} (LDP via G\"artner--Ellis theorem for Markov rw's)}\label{GE_proof}

We introduce a $\bbZ$--valued  process $( N_t )_{ t \in \bbR_+}$ given by the cell number of $X_t$. More precisely, we set $N_t:=n$ if  $X_t = v_n$ for some $ v \in V \setminus \{\overline{v}\}$.
% For later use it is convenient to set
%\[  \phi: \cV \to \bbZ \,, \qquad \phi(v_n)= n\,.\]
%Note that  $N_t= \phi( X_t)$.
Note that 
 the cell number process is in general not a Markovian process and that $ | X_t^* - N_t| \leq 1$.

We introduce the constant $\k$ defined as  $ \k:= \max  \{ r(x)\,:\,  x \in \cV\}$, where $r(x)= \sum_{y: (x,y) \in \cE} r(x,y)$. Due to the periodicity \eqref{simm1}, $\k$ is a well defined constant in $(0,+\infty)$.

\begin{Lemma} \label{celletta} For each $n \in \bbZ\setminus\{0\}$ and $t \in \bbR_+$ it holds $ \bbP( N_t=n) \leq e^{\k t |n| - | n| \log |n|}$. In particular, 
for each $\l \in \bbR$ and $t \in \bbR_+$ it holds $\bbE( e^{\l N_t} ) <+\infty$.
\end{Lemma}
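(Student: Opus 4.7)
The plan is to compare $N_t$ with the jump counting process of $X$, which is in turn dominated by a homogeneous Poisson process of rate $\kappa$, and then use a crude Poisson tail bound combined with Stirling.

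First I would observe, by inspecting the three families of edges $\cE_1, \cE_2, \cE_3$ defining $\cE$, that every edge either lies entirely inside one cell (edges in $\cE_1$) or joins cell $n$ with cell $n \pm 1$ (edges in $\cE_2$ and $\cE_3$). Consequently, each jump of $(X_t)_{t \in \bbR_+}$ changes the cell index $N_t$ by at most one. Let $J_t$ denote the number of jumps of $X$ up to time $t$. Since $X_0 = 0_*$ has cell index $0$, in order to have $N_t = n$ with $n \ne 0$ the process must have performed at least $|n|$ jumps, whence
\begin{equation*}
\bbP(N_t = n) \leq \bbP(J_t \geq |n|).
\end{equation*}

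Next, I would dominate $J_t$ by a Poisson process. Since the total exit rate $r(x) = \sum_{y} r(x,y)$ from any state $x \in \cV$ is bounded by $\kappa$, a standard coupling (thinning a Poisson process of rate $\kappa$, or equivalently stochastically comparing the exponential holding times) gives a Poisson$(\kappa t)$ random variable $\tilde J_t$ with $J_t \leq \tilde J_t$ almost surely. Now I would apply the elementary bound $\bbP(\mathrm{Pois}(\mu) \geq k) \leq \mu^k / k!$, obtained by writing
\begin{equation*}
\sum_{j \geq k} \frac{\mu^j}{j!} \leq \frac{\mu^k}{k!} \sum_{i \geq 0} \frac{\mu^i}{i!} = \frac{\mu^k}{k!} e^{\mu},
\end{equation*}
since $(j-k)!\, k! \leq j!$ for $j \geq k$. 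Applied with $\mu = \kappa t$ and $k = |n|$ this yields $\bbP(N_t = n) \leq (\kappa t)^{|n|}/|n|!$.

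Finally I would invoke Stirling in the rough form $|n|! \geq (|n|/e)^{|n|}$, getting
\begin{equation*}
\frac{(\kappa t)^{|n|}}{|n|!} \leq e^{|n| \bigl(1 + \log(\kappa t) - \log|n|\bigr)},
\end{equation*}
and close the estimate using the elementary inequality $\log(\kappa t) \leq \kappa t - 1$, valid for every $\kappa t > 0$, which gives $1 + \log(\kappa t) \leq \kappa t$ and hence the desired bound $\bbP(N_t = n) \leq e^{\kappa t |n| - |n| \log|n|}$. For the moment statement, I would then sum
\begin{equation*}
\bbE\!\left(e^{\lambda N_t}\right) \leq 1 + \sum_{n \in \bbZ\setminus\{0\}} e^{\lambda n + \kappa t |n| - |n| \log|n|},
\end{equation*}
and observe that for $|n|$ large the superlinear term $-|n|\log|n|$ dominates both $\lambda n$ and $\kappa t |n|$, so the series converges. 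There is no real obstacle here; the only point that requires a small amount of care is the Poisson domination of $J_t$ (making rigorous the coupling under the $\cT$--invariance of rates), and checking that the very crude form of Stirling together with $\log x \leq x-1$ produces exactly the constant $\kappa t$ appearing in the stated bound.
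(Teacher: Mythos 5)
Your argument is correct and takes essentially the same route as the paper: the event $\{N_t=n\}$ forces at least $|n|$ jumps (since every edge of $\cE$ changes the cell index by at most one), the jump count is dominated by a Poisson variable of mean $\kappa t$ via a coupling, and the Poisson tail is then estimated. The only cosmetic difference is in the last step, where the paper applies the exponential Chebyshev inequality with parameter $a=\log|n|$ (obtaining the slightly sharper exponent $\kappa t(|n|-1)$), while you use the direct bound $\bbP(\mathrm{Pois}(\mu)\geq k)\leq \mu^k/k!$ together with Stirling and $\log x\leq x-1$; both yield the stated estimate and the finiteness of $\bbE(e^{\lambda N_t})$.
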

\begin{proof} The event $\{N_t=n \}$ implies that the r.w. $X$ has performed at least $|n|$ jumps within time $t$. On the other hand, by definition of $\k$,  the random walk $X$ waits at each $x\in \cV$ an exponential time of mean at least $1/\k$. Hence  (by a coupling argument) $\bbP(N_t=n ) \leq P( \cZ_t\geq |n|)$, where $\cZ_t$ is a Poisson random variable with intensity $\k t$.  Since  $E\left( e^{a \cZ_t}\right) = e^{ \k t ( e^a-1)}$, by Chebyshev inequality with $a=\log |n|$ we get
\[ \bbP(N_t=n) \leq P( \cZ_t\geq |n|)\leq e^{- |n| \log |n|} E \left( e^{ \cZ_t \log|n|}\right) = 
e ^{- |n| \log |n|+ \k t (|n|-1) }\,, 
\]
thus proving the bound on $\bbP (N_t=n)$. As a consequence, we obtain 
 \[\bbE( e^{\l N_t} ) \leq 1+ 2 \sum _{n=1}^\infty e^{ \l |n| + \k t |n|- |n| \log |n| } < +\infty\,.
 \qedhere
 \]
\end{proof}
We now define a new  function $F: \left(V\setminus\{\overline{v}\} \right) \times \bbR \times \bbR_+ \ni (v, \l, t) \to  F(v, \l ,t ) \in \bbR_+$ 
as
\begin{equation}\label{def_F} F(v, \l, t)= \sum_{n \in \bbZ} e^{\l n } \bbP( X_t=v_n) = \bbE\left( e^{\l N_t} \mathds{1}( X_t=v_n \text{ for some } n \in \bbZ)  \right)\,.\end{equation}
Recall that, given $v \not = w$ in $ V \setminus \{ \overline{v}\}$, we  have set \begin{equation*}%\label{venticello}
 r(v):= r(v_n) \,, \;\;  r_-(w,v):= r(w_{n-1},v_n)\,,\;\;
 r_0(w,v):=r( w,v)\,, \; \; r_+(w,v):= r(w_{n+1},v_n)\,. \end{equation*}
 
\begin{Lemma}\label{speck} Given $\l \in \bbR$ 
consider the finite  matrix $\cA(\l)$ defined in \eqref{Freitag}. Consider the vector--valued  function $ \bbR_+ \ni t \mapsto F^{(\l)} (t)\in \bbR^{ V \setminus \{\overline{v} \} }$ defined as $F^{(\l)} (t)_v := F(v, \l, t)$. Then $F^{(\l)}(\cdot)$ is $C^1$ in $t$ and 
\begin{equation}\label{paoletta}
\partial _t F^{(\l)}(t)= \cA(\l) F^{(\l) } (t )\,.
\end{equation}
\end{Lemma}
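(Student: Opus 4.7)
The plan is to derive \eqref{paoletta} from the Kolmogorov forward equation for the Markov chain $X_t$, twisted by the weight $e^{\lambda n}$. Writing $p_t(v_n) := \bbP(X_t = v_n)$, the forward equation reads
\[
\partial_t p_t(v_n) = -r(v) p_t(v_n) + \sum_{y:(y,v_n)\in\cE} r(y,v_n)\, p_t(y),
\]
where the periodicity \eqref{simm1} lets us replace $r(v_n)$ by $r(v)$. A case analysis on the decomposition $\cE = \cE_1 \cup \cE_2 \cup \cE_3$ identifies three kinds of predecessors of $v_n$: a vertex $w_n$ in the same cell (through $\cE_1$, contributing rate $r_0(w,v)$), a vertex $w_{n-1}$ in the previous cell (this forces $v=\underline v$, via $\cE_2$, contributing $r_-(w,v)$), or a vertex $w_{n+1}$ in the next cell (this forces $w=\underline v$, via $\cE_3$, contributing $r_+(w,v)$). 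With the convention that $r_0, r_\pm$ vanish on non-edges, the forward equation becomes, uniformly in $n$,
\[
\partial_t p_t(v_n) = -r(v) p_t(v_n) + \sum_{w\neq v}\big[r_0(w,v) p_t(w_n) + r_-(w,v) p_t(w_{n-1}) + r_+(w,v) p_t(w_{n+1})\big].
\]

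Next I multiply by $e^{\lambda n}$ and sum over $n \in \bbZ$. The shifts reindex as $\sum_n e^{\lambda n} p_t(w_{n-1}) = e^{\lambda} F(w,\lambda,t)$ and $\sum_n e^{\lambda n} p_t(w_{n+1}) = e^{-\lambda} F(w,\lambda,t)$. Comparing the resulting expression with the definition \eqref{Freitag} of $\cA(\lambda)$, the right-hand side collapses precisely to $(\cA(\lambda) F^{(\lambda)}(t))_v$, giving \eqref{paoletta}. Note that this step is purely algebraic once the interchange of $\partial_t$ with the sum over $n$ is justified.

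The only genuine technical point is the $C^1$ regularity of $F^{(\lambda)}$ in $t$ and the legitimacy of differentiating under the sign of summation, which is where Lemma \ref{celletta} is used. Since $p_t(v_n) \leq \bbP(N_t = n) \leq e^{\kappa t |n| - |n|\log|n|}$ for $|n| \geq 1$, the super-exponential decay in $|n|$ dominates $e^{\lambda n}$ and makes the series for $F(v,\lambda,t)$ converge uniformly on compact $t$-intervals. The same bound applied to the forward equation gives $|\partial_t p_t(v_n)| \leq 2\kappa \max_{w, k \in \{n-1,n,n+1\}} p_t(w_k)$, so the termwise differentiated series $\sum_n e^{\lambda n} \partial_t p_t(v_n)$ enjoys the same super-exponential domination and converges uniformly on compacts as well. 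A dominated convergence argument then delivers the $C^1$ regularity of $F^{(\lambda)}(\cdot)$ and validates the interchange of $\partial_t$ with $\sum_n$, completing the proof. There is no deeper obstacle; the summability estimate of Lemma \ref{celletta} was designed exactly for this purpose.
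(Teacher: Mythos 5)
Your proposal is correct and takes essentially the same route as the paper's proof: the Kolmogorov forward equation for $t\mapsto \bbP(X_t=v_n)$ (which the paper justifies by citing Norris, Theorem 2.8.2), the reindexing of the shifted sums to produce the $e^{\pm\l}$ factors in $\cA(\l)$, and term-by-term differentiation justified by the super-exponential bounds of Lemma \ref{celletta} (the paper phrases this as convergence of the series in the Banach space $C^1[-M,M]$, which is the same domination argument). No gaps.
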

\begin{proof} Fixed $v ,\l $, we write $F(v, \l , \cdot)$ as the  function series $F(v, \l , t)=\sum _{n\in \bbZ} f_n(t)$, where $f_n(t)= e^{\l n } \bbP (X_t=v_n)$.  
By \cite{N}[Theorem 2.8.2], the function $\bbR_+ \ni t \mapsto \bbP( X_t=v_n) \in [0,1]$ is differentiable  and moreover
\begin{multline}
 \partial _t  \bbP( X_t=v_n) = - r(v_n) \bbP( X_t=v_n)+ \sum _{w\in V \setminus \{\overline{v},v  \}} \Big[  r(w_{n-1},v_n) \bbP( X_t=w_{n-1})\\
 +  r(w_n,v_n)\bbP( X_t=w_n)  + r(w_{n+1},v_n) \bbP( X_t=w_{n+1})\Big]
 %=  - r(v) \bbP( X_t=v_n)\\
 %  + \sum _{w\in V \setminus \{\overline{v},v  \}} \Big[  r_-(w,v) \bbP( X_t=w_{n-1})+  r_0(w,v)\bbP( X_t=w) + r_+(w,v) \bbP( X_t=w_{n+1})\Big]
 \,.
 \end{multline}
 Then, by Lemma \ref{celletta}, we conclude that, for  $M>0$ and $n \in \bbZ$ with $|n| \geq 2$, it holds  
 \begin{align*}
&  \|f_n \|_{ L^\infty [-M,M] } \leq   e^{ |\l|\cdot  |n|+ \k M  -|n| \log |n|}\,, \\
&  \| \partial_t f_n \|_{ L^\infty [-M,M] } \leq 4 \k |V| e^{ |\l|\cdot  |n|+ \k( |n|+1) M  -(|n|-1) \log (|n|-1) }\,,
\end{align*}
The  space $C^1[-M,M]$ (of functions $C^1$ on  $(-M,M)$, such that they and their first derivates have  continuous extensions to $[-M,M])$ is a Banach space  endowed with the norm  $\| f\|:= \|f \|_{ L^\infty [-M,M] }
+ \|\partial_t f \|_{ L^\infty [-M,M] }$. We therefore conclude that $F(v, \l , t)=\sum _{n\in \bbZ} f_n(t)$ belongs to $C^1(\bbR)$ and $\partial _t F(v, \l , t)=\sum _{n\in \bbZ} f'_n(t)$, i.e. 
 \begin{equation*}
\begin{split}
\partial_t F(v,\l, t)& =  \sum_{n \in \bbZ} e^{\l n}  \partial _t  \bbP( X_t=v_n) \\
&=- r(v) F(v,\l,t)+  \sum _{w\in V \setminus \{\overline{v},v  \}}\bigl[ e^\l r_-(w,v)  +r_0(w,v)+e^{-\l} r_+(w,v)\bigr] F(w,\l,t)\,.
\end{split}
\end{equation*}
This concludes the proof.
\end{proof}

We can now conclude the proof of Theorem \ref{GE}. Since $ | X_t^* - N_t| \leq 1$, it is  enough to prove the same LDP for $N_t/t$.
Due to Lemma  \ref{speck} we have $ F^{(\l)}(t)=e^{(t-1) \cA(\l) } F^{(\l) } (1 ) $. Since the graph $\cG=(\cV,\cE)$ is connected, Definition \eqref{def_F} implies  that the vector $ F^{(\l) } (1 )$ has strictly positive entries. In particular we can write
 $$ \bbE( e^{\l N_t} ) = e^{-\k (t-1)}  \sum _{v,v' \in V \setminus \{\overline{v} \}} \bigl[e^{[ \cA(\l)+\k ] ( t-1)}\bigr]_{v,v'}F(v',\l,1)\,,$$
where $ \k:= \max  \{ r(x)\,:\,  x \in \cV\}$, as above. Note that  $\cA(\l)+\k$ is an irreducible matrix with nonnegative  entries and therefore, by Perron--Frobenius theorem, it has  a simple positive eingevalue $\bar \g $  and an  associated eigenvector with  strictly positive entries $(a(v) )_{v \in V \setminus\{\overline{v} \} }$, while any other eigenvalue $\bar \g'$ is such that $|\bar  \g' | \leq \bar \g$ (in particular, $\cR(\bar \g') < \cR(\bar \g)$). The above eigenvalue $\bar \g$ is the so called Perron--Frobenius eigenvalue and equals the spectral radius of $\cA(\l)+\k$ (note that $\bar \g=\bar\g(\l))$.
 Call $$
 \begin{cases}
 C(\l) := \max\{   F(v,\l,1)/a_v \,: v \in V \setminus\{\overline{v} \}\}\,,\\
 c(\l)  \;:= \min\{  F(v,\l,1)/a_v \,: v \in V \setminus\{\overline{v} \}\}\,.
\end{cases}$$
Note that $C(\l) , c(\l)$ are positive constants.
Then
\begin{align*}
& \bbE( e^{\l N_t} )\leq C(\l) e^{-\k (t-1)}   \sum _{v,v' \in V \setminus \{\overline{v} \}} \bigl[e^{[ \cA(\l)+\k ]  (t-1)}\bigr]_{v,v'} a_{v'}=e^{(\bar\g-\k)( t-1) }   C(\l) \sum _{v\in V \setminus \{\overline{v} \}} a_v  \,,\\
& \bbE( e^{\l N_t} )\geq c(\l) e^{-\k (t-1)}   \sum _{v,v' \in V \setminus \{\overline{v} \}} \bigl[e^{[ \cA(\l)+\k ] ( t-1)}\bigr]_{v,v'} a_{v'}=e^{(\bar \g-\k)( t-1) }   c(\l) \sum _{v\in V \setminus \{\overline{v} \}} a_v\,.
\end{align*}
It then follows that the limit $\lim_{t \to \infty} \frac{1}{t} \ln \bbE( e^{\l  N_t} )$ exists and equals $ \bar\g(\l) -\k$, which  corresponds to $\L(\l)$ by the previous discussion.

 By finite--dimensional perturbation theory \cite{Katino},   the  Perron--Frobenius eigenvalue  $\bar \g= \bar \g(\l)$ is differentiable in $\l$, thus implying that $\L(\l)$ is differentiable in $\l$. At this point  the thesis   follows from G\"artier--Ellis theorem (cf. \cite{dH}[Lemma V.4 and Theorem V.6]).

%%%%%%%%%%%%%%%%%%%%%%%%%%%%%%%%%%%%%%%%%%%%%%%%%

\section{Proof of Theorem \ref{teo3} (GC  type symmetry for Markov rw's)}\label{secGC_bis}

We start with   a technical result, that is also useful in the applications for the computation of the functions $f_\pm (\l)$.  Consider a generic stochastic process $(X_t)_{t \in \bbR_+}$ as in Definition \ref{corpo1}.  Define 
\[J_1:= \inf\left\{ t >0\,:\, X_t \in \{-1_*,0_*,1_*\}, \; \exists s \in (0,t) \text{ with } X_s \not = X_0 \right\}\,,\]
and set
	\begin{equation}\label{caffe}
	\begin{split}
	\tilde{f}_\pm (\l)  & := \bbE_{0_*} ( e^{\l J_1} \mathds{1} (X_{J_1} = \pm 1_*) )\, , \\
	\tilde{f}_0 (\l)  & := \bbE_{0_*} ( e^{\l J_1} \mathds{1} (X_{J_1} = 0_*)) \, . 
	\end{split}
	\end{equation}
\begin{Lemma}\label{calcolo_LD}
If $ \tilde f_0(\l) < 1$, then 
\begin{equation*}
	f_+(\l) =% \frac{  \tilde{f}_+(\l)}{1-(1-\tilde{p} -\tilde{q})\tilde{f}_0 (\l)} 
	\frac{ \tilde{f}_+(\l)}{1- \tilde{f}_0 (\l)} \, ,
	\qquad
	 f_-(\l) = \frac{  \tilde{f}_-(\l)}{1-\tilde{f}_0 (\l)} \, .
	\end{equation*}
	If 
	$ \tilde f_0(\l) \geq1$,  then  $f_+(\l)= f_-(\l)= +\infty$.
	\end{Lemma}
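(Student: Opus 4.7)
The plan is a renewal decomposition of $S$ indexed by the successive returns of $X$ to the gate set $\{-1_*, 0_*, 1_*\}$. Set $\sigma_0 := 0$ and inductively
\[
\sigma_{k+1} := \inf\bigl\{ t > \sigma_k : X_t \in \{-1_*, 0_*, 1_*\},\; \exists\, s \in (\sigma_k, t) \text{ with } X_s \neq X_{\sigma_k}\bigr\},
\]
so that $\sigma_1 = J_1$ under $\bbP_{0_*}$. Let $N := \inf\{k \geq 1 : X_{\sigma_k} \in \{-1_*, 1_*\}\}$. Since $\bbE_{0_*}(S) < \infty$ implies $S < \infty$ $\bbP_{0_*}$-a.s., one has $N < \infty$ a.s.\ and $S = \sigma_N$.

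The key ingredient is the regenerative property of $X$ at each visit to $0_*$: conditionally on $\{X_{\sigma_k} = 0_*\}$, the shifted path $(X_{\sigma_k + t})_{t \geq 0}$ is independent of $(X_t)_{t \in [0, \sigma_k]}$ and distributed as $\bbP_{0_*}$. In the Markov setting (the main application) this is the strong Markov property, while for general processes fulfilling Definition \ref{corpo1} it is the natural analogue of property (iv) at the regeneration point $0_*$. It implies that the increments $\bigl(\sigma_j - \sigma_{j-1},\, X_{\sigma_j}\bigr)$, $j=1,2,\dots$, form an i.i.d.\ sequence of copies of $(J_1, X_{J_1})$, stopped at the first index where $X_{\sigma_j} \in \{-1_*, 1_*\}$.

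Decomposing on the value of $N$ and exploiting this independence yields
\[
f_\pm(\lambda) = \sum_{k=1}^{\infty} \bbE_{0_*}\!\left( e^{\lambda \sigma_k}\, \mathds{1}(X_{\sigma_1}=\cdots =X_{\sigma_{k-1}}=0_*,\, X_{\sigma_k}=\pm 1_*) \right) = \tilde{f}_\pm(\lambda) \sum_{k=0}^{\infty} \tilde{f}_0(\lambda)^k.
\]
When $\tilde{f}_0(\lambda) < 1$ the geometric series sums to $(1 - \tilde{f}_0(\lambda))^{-1}$, producing the announced formulas. When $\tilde{f}_0(\lambda) \geq 1$, the same renewal identity evaluated at $\lambda = 0$ combined with the standing assumption $\bbP_{0_*}(X_S=\pm 1_*) > 0$ forces $\tilde{f}_\pm(\lambda) > 0$, so the series diverges and $f_\pm(\lambda) = +\infty$.

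The main obstacle is justifying the regeneration of $X$ at each return to $0_*$ in the abstract framework of Definition \ref{corpo1}, which only records regeneration at the exit time $S$. In the Markov case this comes for free from the strong Markov property, and in the non-Markov examples treated in the paper this regeneration is an intrinsic feature of the model.
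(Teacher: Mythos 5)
Your proof is correct and follows essentially the same route as the paper: your $\sigma_k$ are exactly the paper's return times $J_k$, the renewal identity $f_\pm(\l)=\tilde f_\pm(\l)\sum_{k\ge 0}\tilde f_0(\l)^k$ is the same, and the geometric-series dichotomy (including the divergence when $\tilde f_0(\l)\ge 1$, using $\tilde f_\pm(\l)>0$) is handled identically. The regeneration at each intermediate return to $0_*$ that you flag as the main obstacle is also taken for granted in the paper's proof, which simply invokes Definition \ref{corpo1}; in the Markov setting of Theorem \ref{teo3}, where the lemma is actually applied, it is indeed just the strong Markov property.
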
	

\begin{proof}
We call $J_k$'s  the consecutive  times at which the stochastic process $\bigl( X_t \bigr)_{t \geq 0}$   hits  the states of type $n_*$:
 \begin{equation*}\label{jumping}
	\begin{cases}
	J_0 := 0 \\
	J_k := \inf \{ t > J_{k-1}\,:\, X_t \in \{ -1_* , 0_* , 1_* \}\,,\; \exists s \in (J_{k-1},t)  \text{ with } X_s \not = X_{J_{k-1} } 
  \}  \quad k\geq 1 \, . 
	\end{cases} 
	\end{equation*}
We can write
	\begin{equation} \label{Sdecomp} 
	S = \sum_{k=0}^\infty \mathds{1} ( X_{J_0} = \ldots = X_{J_k} = 0_* , X_{J_{k+1}} \in \{ -1_* , 1_* \}  ) J_{k+1} \, .
	\end{equation}
Taking the exponential  at both sides and multiplying by $\mathds{1} (X_S=1_*)$ we get
\[ e^{\l S} \mathds{1}(X_S = 1_* ) 
	= \sum_{k=0}^\infty \mathds{1} ( X_{J_0} = \ldots = X_{J_k} = 0_* , X_{J_{k+1}} =1_*  ) e^{\l J_{k+1} } \]
Note that, by Definition \ref{corpo1}, w.r.t. the the probability measure $\bbP_{0_*} ( \cdot | X_{J_0} = \ldots = X_{J_k} = 0_* , X_{J_{k+1}} =1_*)$, the random variables $e^{\l (J_i-J_{i-1})}$, $1\leq i \leq k+1$, are independent with expectation $\bbE_{0_*}( e^{\l J_1} | X_{J_1}=0_*$) if $1\leq i \leq k$ and  
 $\bbE_{0_*}( e^{\l J_1} | X_{J_1}=1_*$) if $i=k+1$.
 Hence,
 \[ \begin{split}
	f_+ (\l)& =\bbE_{0_*} ( e^{\l S} \mathds{1} (X_{S} = 1_*) ) 
	\\ &= \sum_{k=0}^\infty \bbP_{0_*} (X_{J_1} = 0_*)^k \bbP_{0_*} (X_{J_1} = 1_*) 
	\bbE_{0_*} ( e^{\l J_1} |X_{J_1} = 0_*)^k \bbE _{0_*}( e^{\l J_1} |X_{J_1} = 1_* ) 
	\\ & = \sum_{k=0}^\infty \tilde{f}_0 (\l)^k \tilde{f}_+(\l) \,. %= \frac{\tilde{f}_+(\l)}{1- \tilde{f}_0 (\l)}
	\end{split}\, . \]
A similar expression holds for $f_-(\l)$. At this point it is immediate to derive the thesis.\end{proof}

\medskip

Let us now come back to the same context of Section \ref{amiciGC}: $(X_t)_{t \in \bbR_+}$ is a Markov random walk on the quasi 1d lattice $\cG= (\cV, \cE)$, with positive rates $r(x,y)$, $(x,y) \in \cE$, such that \eqref{simm1} and \eqref{mercato} hold. For the rest of this section, we refer to Markov random walk without state explicitly that they are Markov.  Due to \eqref{mercato} in the figures of $G,\cG$ we draw only unoriented edges with the convention that for  each unoriented edge $\{x,y\}$ the graph in consideration presents both the edge $(x,y)$ and the edge $(y,x)$. 

Recall that
given an edge  $(u,v) \in E$  in the fundamental graph $G=(V,E)$,  we have defined (cf. \eqref{def_r_G}) $ r(u,v) = r (\pi(u), \pi(v) )$ where
 $\pi$ is the map $ V \to \cV$ such that $\pi(u)= u_0$ if $ u\not = \overline{v}$ and $\pi(\overline{v}) = \underline{v}_1=1_*$.  Given $v \in V$ we set  
 \begin{equation}\label{pierre}
  r(v):= \sum _{y : ( \pi(v),y) \in \cE} r( \pi(v), y) \,.
 \end{equation}
 Note that $r(\underline{v})= r(\overline{v})$. We point out that   the map $\pi: V \to \cV$ does not induce a graph embedding of $G$ into $\cG$. 
\begin{figure}[!ht]
    \begin{center}
     \centering
  \mbox{\hbox{
 \boxed{ \includegraphics[width=.3\textwidth]{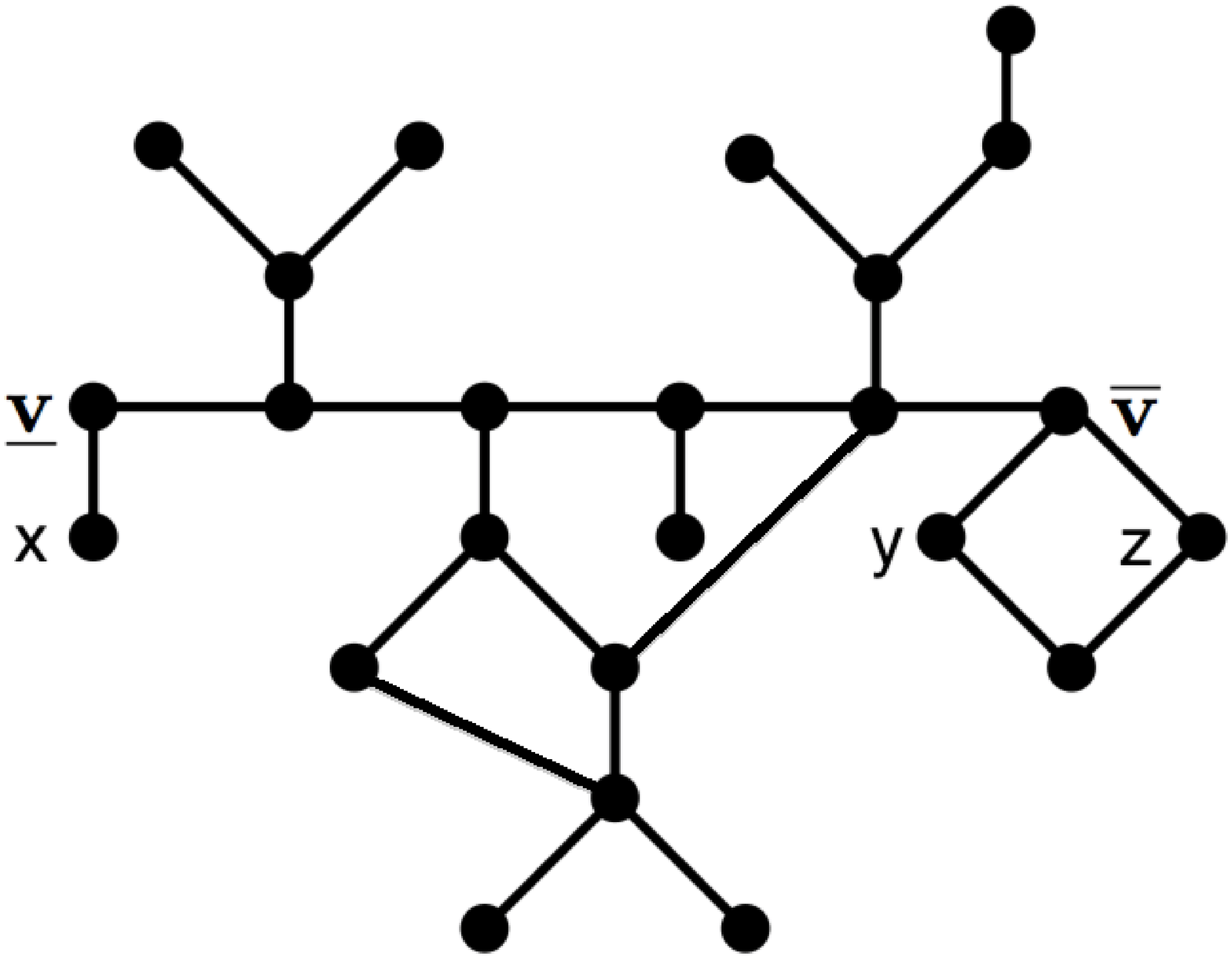}}}}
            \end{center}\caption{Example of fundamental graph $G$}\label{vendemmia}
        %    \caption{Two examples of $(\underline{v}, \overline{v})$--minimal graphs for which the functions $f_+$, $f_-$ have the same constant of proportionality.}
  \end{figure} Indeed,  problems come from the neighbors of  $\underline{v}, \overline{v}$ in  $G$. Consider for example the fundamental graph $G$ in Figure \ref{vendemmia}.  Then $x_0, y_{-1} ,z_{-1}$ are  neighboring points of  $\underline{v}_0$, while  $x_1, y_0,z_0$ are neighboring points of $\underline{v}_1$. 
Despite this phenomenon, the map $\pi$ induces an isomorphism between the family of paths 
$(x_0,x_1, \dots, x_m)$ 
in $G$  from $\underline{v}$ to $\overline{v}$ with interior points in  $V \setminus\{\underline{v}, \overline{v}\}$ and the  family of paths $(x_0',x'_1, \dots, x_m')$ in $\cG$  from $\underline{v}_0=0_*$ to $\underline{v}_1=1_*$ with interior points in  $\cV \setminus\{0_*,1_*\}$, moreover it holds $r(x_i,x_{i+1})= r\bigl( \pi(x_i), \pi(x_{i+1}) \bigr)$ for $0 \leq i <m$ and $r(x_i)=r(\pi(x_i))$ for $0 \leq i \leq m$. This property will be used below. 

\medskip

By Theorem   \ref{GC}   the Gallavotti--Cohen type symmetry  \eqref{verabila} is satisfied   for some constant $\D$ if and only if  $\f_+(\l)/\f_-(\l)= e^{\D}$ for all $\l \leq \l_c$. On the other hand, 
 by \eqref{cena} and the above Lemma \ref{calcolo_LD}, it holds
\begin{equation}\label{salatini}
\frac{\varphi_+(\l)}{\varphi_-(\l)}=
\frac{f_+(\l)}{f_-(\l)} =\frac{ \tilde{f}_+(\l) }{ \tilde{f}_-(\l)}\,, \qquad \forall \l \leq \l_c\,.
\end{equation}

 Given an integer $m \geq 1$, let $\cA_m$ be the family of sequences $(x_0,x_1, \dots, x_m)$ such that
 $x_0= \underline{v}$, $x_m = \overline{v}$, $(x_i,x_{i+1}) \in E$ for all $i:0\leq i <m$ and $ x_i \in V \setminus \{\underline{v}, \overline{v} \} $ for all $0<i < m$. We call $\cA_m^*$ the family of 
 sequences  satisfying the same properties as above  when exchanging the role of  $\underline{v}$ and $\overline{v}$. 
 Then we can write
\begin{equation}\label{giaco0}
\begin{split}
\tilde{f}_+(\l)& = \sum_{m=1}^\infty \sum _{(x_0, x_1, \dots, x_m) \in \cA _m}
 \int _{\bbR_+^{m-1}} dt_1 dt_2 \dots dt_{m-1}  e^{ \sum_{i=0}^{m-1}(\l-r(x_i) )  t_i  } \prod _{i=0}^{m-1} r(x_i,x_{i+1}) \\
 &=
 \begin{cases}
  \sum_{m=1}^\infty {\sum}_{(x_0, x_1, \dots, x_m) \in \cA _m}
  \prod _{i=0}^{m-1} r(x_i,x_{i+1}) \prod _{i=0}^{m-1} \frac{1}{r(x_i)-\l} & \text{ if } \l < \min_{x \in V} r(x) \\
  +\infty & \text{ otherwise}\,.
  \end{cases}
 \end{split}
 \end{equation}
% The above computation comes from the interpretation that $(x_0, x_1,\dots, x_m)$ are the states visited by $X$ when going from $\underline{v}$ to $\overline{v}$ through interior points of $G$, and $t_i$ is the holding time at $x_i$. 
Given $\g= (x_0,x_1, \dots, x_m)$ and given $e \in E$ we write $N_e(\g)$ for the number of indices $i :0\leq i \leq m-1$ such that $(x_i,x_{i+1})=e$. Then the above formula can be rewritten as
 \begin{equation}\label{giaco1}
\begin{split}
\tilde{f}_+(\l) &=
 \begin{cases}
  \sum_{m=1}^\infty \sum _{\g=(x_0, x_1, \dots, x_m) \in \cA _m}
  \prod _{e \in E} r(e) ^{N_e(\g)}  \prod _{i=0}^{m-1} \frac{1}{r(x_i)-\l} & \text{ if } \l < \min_{x \in V} r(x) \,,\\
  +\infty & \text{ otherwise}\,.
  \end{cases}
 \end{split}
 \end{equation}
A similar formula holds for $\tilde{f}_-(\l)$.

\medskip

 Suppose now that $G$ is \emph{$(\underline{v}, \overline{v})$}--minimal. We want to prove that \eqref{verabila} is satisfied with $\D$ given by \eqref{alexey}. Call $G_1, G_2, \dots, G_k$ the subgraphs attached to the path $\g_*=(z_0,z_1, \dots, z_n)$ as in Def. \ref{pasopanori}, such that  each $G_a$ has exactly one point in common with $\{z_1,z_2, \dots, z_{n-1}\}$ (recall that $z_0 = \underline{v}$ and $z_n= \overline{v}$). Then given  $(x_0,x_1, \dots, x_m) \in \cA_m$  there exist indices $$0 \leq i_1 <j_1<i_2<j_2<  \dots< i_{r-1}<j_{r-1} < i_r< j_r \leq m$$ such that for any $k: 1\leq  k \leq r$ the subpath (called \emph{excursion})
 $$ (x_{i_k}, x_{i_k+1}, \dots, x_{j_k})$$ 
 satisfies: (i) $x_{i_k}= x_{j_k}$ 
 and such a point belongs to $\g$, (ii) the points  $x_{i_k+1}, \dots,x_{j_k-1}$ are in $G_a\setminus \{z_0,z_1, \dots, z_n\}$ for some $a:1\leq a \leq k$.  When $r=0$ then there is no excursion and the path $(x_0,x_1, \dots, x_m)$ has support in $\{z_0,z_1, \dots, z_n\}$.
 Call $(x_0,x_1, \dots, x_m)^*$ the new path  obtained  by inverting $(x_0,x_1, \dots, x_m)$ with the exception that the  excursions  inside are performed in their original  orientation: 
 \begin{equation*}
 \begin{split}
 (x_0,x_1, \dots, x_m)^*:=
 (& x_m, x_{m-1},\dots, x_{j_r+1}, 
 \underline{x_{i_r},  x_{i_r+1}, x_{i_r+2}, \dots,  x_{ j_r}}, x_{i_r-1},x_{i_r-2}, \dots,\\
& x_{j_{r-1} +1}, \underline{ x_{i_{r-1}} , x_{i _{r-1}+1}, \dots x_{j_{r-1}-1}, x_{j_{r-1} }  }, x_{i_{r-1}-1},\dots  \dots,x_{j_1+1} ,  \\
& \underline{  x_{i_1} ,
 x_{{i_1}+1}, \dots, x_{j_1-1}, x_{j_1} } , x_{i_1-1}, x_{i_1-2} , \dots, x_2,x_1,
  x_0)\,.
  \end{split}
 \end{equation*}
 Above we have underlined the excursions (note they appear in their original orientation).
  We point out that the map $\cA_m \ni  (x_0,x_1, \dots, x_m) \to (x_0,x_1, \dots, x_m)^* \in \cA_m^*$ is a bijection. Hence it holds
 \begin{equation}\label{giaco2}
\tilde{f}_-(\l)=
 \begin{cases}
  \sum_{m=1}^\infty \sum _{\g=(x_0, x_1, \dots, x_m) \in \cA _m}
  \prod _{e \in E} r( e) ^{N_e(\g^*)}  \prod _{i=1}^{m} \frac{1}{r(x_i)-\l} & \text{ if } \l < \min_{x \in V} r(x) \\
  +\infty & \text{ otherwise}\,.
 \end{cases}
 \end{equation}
Note that  $  \prod _{i=0}^{m-1} \frac{1}{r(x_i)-\l}=  \prod _{i=1}^{m} \frac{1}{r(x_i)-\l}$ since $r(\underline{v})= r( \overline{v})$. 
  By construction 
  $N_e(\g)= N_e(\g^*)$ if $e $ is not of the form $(z_i, z_{i\pm 1})$.  On the other hand,  
 \begin{equation}
 \begin{split}
 & \frac{ \prod _{i=0}^{n-1} r(z_i, z_{i+1}) ^{N_{(z_i, z_{i+1} )} (\g)}
  \prod _{i=0}^{n-1} r(z_{i+1},z_i) ^{N_{( z_{i+1},z_i )} (\g)}
  }{
  \prod _{i=0}^{n-1} r(z_i, z_{i+1}) ^{N_{(z_i, z_{i+1} )} (\g_*)}
  \prod _{i=0}^{n-1} r(z_{i+1},z_i) ^{N_{( z_{i+1},z_i )} (\g_*)}
    }=\\
    &\frac{ \prod _{i=0}^{n-1} r(z_i, z_{i+1}) ^{N_{(z_i, z_{i+1} )} (\g)}
  \prod _{i=0}^{n-1} r(z_{i+1},z_i) ^{N_{( z_{i+1},z_i )} (\g)}
  }{
  \prod _{i=0}^{n-1} r(z_i, z_{i+1}) ^{N_{(z_{i+1}, z_{i} )} (\g)}
  \prod _{i=0}^{n-1} r(z_{i+1},z_i) ^{N_{( z_{i},z_{i+1} )} (\g)}
    }=\\
  &  \prod _{i=0}^{n-1}  \left( \frac{ r(z_i, z_{i+1})}{r(z_{i+1},z_i) } \right) ^{ N_{(z_i, z_{i+1} )} (\g)-N_{(z_{i+1}, z_{i} )} (\g)}=  \prod _{i=0}^{n-1}   \frac{ r(z_i, z_{i+1})}{r(z_{i+1},z_i) }\,,
 \end{split}
 \end{equation} 
   since it must be $N_{(z_i, z_{i+1} )} (\g)-N_{(z_{i+1}, z_{i} )} (\g)=1$ for any path $\g\in \cA_m$ for some $m \geq 1$. Due to \eqref{giaco1}, \eqref{giaco2} and the previous observations, we get that $\tilde{f}_+(\l)/ \tilde{f}_-(\l)= e^\D$ with $\D$ given in \eqref{alexey}. Due to \eqref{salatini} and Theorem \ref{GC} we get \eqref{verabila}.

   \medskip
   
  We now prove  the reverse implication.   Consider the oriented subgraph $\hat G = (\hat V, \hat E)$ consisting of the points in $V$ and edges in $E$  that appear in some path $\g $ as $\g $ varies in $\cA_m$ and $m$ varies in $\{1,2,\dots\}$.  
  %Note that in the computation of $\tilde{f}_\pm$ only points in $\hat V$ and  edges in $\hat E$ appear (see \eqref{giaco0} and \eqref{giaco1}). 
 % Note that the map $\pi$ restricted to $\hat G$ gives a graph embedding of $\hat G$ into $\cG$.  
  %In what follows we prove the following
  
  \begin{Proposition}\label{davide} Suppose that the fundamental graph $G$ is not $(\underline{v}, \overline{v})$--minimal.
 Fix
   $\bigl( r(e)\,: \,e \in E \setminus \hat E \bigr) \in (0,+\infty)^{E \setminus \hat E}$. Call $\cR\subset (0,+\infty)^{\hat E}$  the family of  vectors  $\bigl(r(e): e \in  \hat E \bigr)\in (0,+\infty)^{\hat E}$   for which the random walk  on $\cG$ induced by $(r(e): e \in E)$ satisfies  the
  Gallavotti--Cohen type symmetry \eqref{verabila}   for some  constant $\D$,  depending on 
  $(r(e): e \in E)$. Then $\cR$ has 
   zero  Lebesgue measure  in  $(0,+\infty) ^{ \hat E}$. 
   \end{Proposition}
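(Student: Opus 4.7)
The plan is to translate the Gallavotti--Cohen (GC) symmetry into a polynomial identity in the rate vector $r = (r(e))_{e \in \hat E}$, and then exhibit a single rate vector violating that identity. Combining Theorem~\ref{GC} with \eqref{salatini}, the symmetry \eqref{verabila} holds for some $\D$ if and only if
\[
\frac{\tilde f_+(\l;r)}{\tilde f_-(\l;r)} \;=\; \frac{\tilde f_+(0;r)}{\tilde f_-(0;r)} \qquad \forall\, \l \leq \l_c.
\]
From the path expansions \eqref{giaco1}--\eqref{giaco2}---or equivalently from the representation of $\tilde f_\pm(\l;r)$ as matrix entries of the resolvent $(\l - Q(r))^{-1}$ of the sub-Markov generator $Q(r)$ on $V \setminus \{\underline v,\overline v\}$ of the chain killed at $\{\underline v,\overline v\}$---both $\tilde f_\pm(\,\cdot\,;r)$ extend to rational functions of $\l$ on $\bbC$ whose coefficients are polynomial in $r$. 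Writing $\tilde f_\pm(\l;r) = P_\pm(\l;r)/D(\l;r)$, analytic continuation upgrades the above real-interval identity to the polynomial identity
\[
P_+(\l;r)\,P_-(0;r) \;-\; P_-(\l;r)\,P_+(0;r) \;=\; 0 \qquad \text{in } \l.
\]
Equating coefficients of powers of $\l$ yields finitely many polynomial equations $Q_j(r) = 0$ on $\bbR^{\hat E}$, and $\cR$ equals the intersection of their common zero locus with $(0,+\infty)^{\hat E}$.

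It therefore suffices to exhibit one rate vector $r^\star \in (0,+\infty)^{\hat E}$ for which $\l \mapsto \tilde f_+(\l;r^\star)/\tilde f_-(\l;r^\star)$ is non-constant, since then at least one $Q_j$ is a non-identically-zero polynomial on $\bbR^{\hat E}$. This construction is the main obstacle and the place where complex analysis (residue calculus on the resolvent) enters. The hypothesis that $G$ is not $(\underline v,\overline v)$-minimal provides two distinct simple paths $\g_1 \neq \g_2$ from $\underline v$ to $\overline v$ in $\hat G$. I would proceed perturbatively around a degenerate limit $r^0$ obtained by letting the rates on $E(\g_2) \setminus E(\g_1)$ (and, if needed, further edges of $\hat E$) tend to zero, so that the resulting subgraph is $(\underline v,\overline v)$-minimal along $\g_1$. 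The part of the theorem already proved then forces $\tilde f_+(\,\cdot\,;r^0)/\tilde f_-(\,\cdot\,;r^0) \equiv e^{\D_0}$ with $\D_0$ given by \eqref{alexey} along $\g_1$. Reactivating the edges of $E(\g_2) \setminus E(\g_1)$ by a small parameter $\e > 0$, one gets an expansion
\[
\frac{\tilde f_+(\l;r^\e)}{\tilde f_-(\l;r^\e)} \;=\; e^{\D_0} \;+\; \e\, g(\l) \;+\; O(\e^2),
\]
where $g(\l)$ arises from first-order resolvent perturbation of $(\l - Q(r^\e))^{-1}$. The hard part is to verify that $g$ is not identically constant in $\l$; this is done by isolating a simple pole of the perturbed resolvent at an eigenvalue supported on interior vertices of $\g_2$ that are decoupled from $\g_1$ at $\e = 0$, whose residue contributes a genuinely $\l$-dependent term to $g$. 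For all sufficiently small $\e > 0$, $r^\star := r^\e$ then lies in $(0,+\infty)^{\hat E}$ and yields the required non-constant ratio.

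Given such an $r^\star$, at least one $Q_j$ is a nontrivial polynomial on $\bbR^{\hat E}$; its real zero set has $|\hat E|$-dimensional Lebesgue measure zero, and intersecting with the open cone $(0,+\infty)^{\hat E}$ yields $\mathrm{Leb}(\cR) = 0$, as claimed.
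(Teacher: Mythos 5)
Your first step---replacing the paper's several--complex--variables argument (holomorphic extension in $(\l,\underline r)$ jointly, Weierstrass preparation, Lemma \ref{como}) by the observation that $\tilde f_\pm(\l;r)$ are rational in $(\l,r)$, so that the GC symmetry becomes a finite system of polynomial equations $Q_j(r)=0$ whose common zero set has measure zero unless all $Q_j$ vanish identically---is a legitimate and in fact cleaner route to the same reduction. Both you and the paper are then left with the same task: exhibit \emph{one} admissible rate vector for which $\l\mapsto \tilde f_+(\l)/\tilde f_-(\l)$ is non--constant.

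It is in this second step that your argument breaks down. The first--order term $g(\l)$ in your expansion $\tilde f_+/\tilde f_- = e^{\D_0}+\e\,g(\l)+O(\e^2)$ is identically zero, so your proposed detection mechanism cannot work at order $\e$. Indeed, write $\tilde f_\pm = A_\pm + B_\pm$, where $A_\pm$ sums over paths using only edges of $\g_1$ and $B_\pm$ over paths using at least one reactivated edge. Any path from $\underline v$ to $\overline v$ (or any excursion) involving edges of $E(\g_2)\setminus E(\g_1)$ must traverse at least two such edges (by the definition of $(\underline v,\overline v)$--minimality, $\g_2$ differs from $\g_1$ by at least two edges, and an excursion enters and exits), so $B_\pm=O(\e^2)$. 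Meanwhile the path--reversal bijection of the preceding part of the proof gives $A_+(\l,\e)/A_-(\l,\e)=e^{\D_0}$ \emph{exactly} for every $\e$, because that identity holds for arbitrary holding rates $r(v)$; the $O(\e)$ shift of the holding rates therefore does not perturb the ratio at first order. Hence $g\equiv 0$ and the symmetry--breaking contribution first appears at the order in $\e$ at which $\g_2$--through--paths enter. (A two--parallel--path example with interior vertices $a,b$ makes this explicit: the ratio equals $e^{\D_1}+O(\e^2)$, with the $\l$--dependence carried by the term $\e^2/(2\e-\l)$.) This is precisely why the paper does not perturb to first order but instead passes to the full degenerate limit $1/k\to 0$ (formula \eqref{cremina}), reduces to the two--path graph $G'$, splits the path sum into the families $\cP_1,\cP_2$ with distinct affinities $\D_1\neq\D_2$, and then extracts the $\l\to-\infty$ asymptotics to show the ratio cannot be constant for a suitable choice of rates (forcing $M=M'$ and $\sum_i r(z_i)=\sum_i r(z_i')$, which one can violate). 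To repair your proof you would need to carry out this higher--order (or limiting) analysis; as written, the claim that a ``genuinely $\l$--dependent term'' appears in $g$ is false.
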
    Since $\hat E \not= \emptyset$ and by Fubini theorem, this would conclude the proof of Theorem \ref{teo3}. The proof is  in part based on complex analysis.

  \subsection{Proof of Proposition \ref{davide}}

    From now on $r(e)$, $e \in E \setminus \hat E$, are fixed positive constants. We first prove some preliminary results.

  \begin{Lemma}\label{como} 
  Define the open subset $\O\subset (-\infty,0) \times (0,+\infty)^{\hat E}$ as the family of vectors $ ( \l, (r(e))_{e \in \hat E} )$ with $ r(e)>0\;\;\forall e \in \hat E$ and  $ -\l> 3 \max_{v \in \hat V} r(v) +1 $, where $r(v)$ is the value defined in \eqref{pierre} for the random walk on $\cG$ induced by $\bigl( r(e) \,:\, e \in E \bigr)$.  
  
 Consider the positive function $h_\pm \left( \l, \bigl( r(e)\bigr)_{e \in \hat E} \right)$   defined on $\O$   as the function $\tilde f_\pm(\l)$ for the random walk on $G$   induced by $\left( r(e)\,:\,e \in E\right)$. Then there exists an holomorphic function $h^*_\pm : \O _* \to \bbC$ defined  on an open subset $\O_* \subset \bbC \times \bbC^{\hat E}$ such that $\O= \O_* \cap( \bbR \times \bbR^{\hat E})$ and  
 $h_\pm$ is the restriction to $\O$ of the function $h^*_\pm$.
  \end{Lemma}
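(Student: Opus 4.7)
The plan is to prove Lemma \ref{como} by writing $h_\pm$ as a series of rational functions in $(\l, r)$ that extends termwise to complex parameters, and then controlling the tail uniformly on compact subsets of a suitable open neighborhood of $\O$ in $\bbC \times \bbC^{\hat E}$.

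First, by the very definition of $\hat E$, every path $\g \in \cA_m$ uses only edges of $\hat E$ and visits only vertices of $\hat V$. Hence $N_e(\g) = 0$ for $e \in E \setminus \hat E$, and \eqref{giaco1} reduces to
$$ h_+(\l, r) = \sum_{m \geq 1} \sum_{\g \in \cA_m} T_\g(\l, r), \qquad T_\g(\l, r) := \prod_{e \in \hat E} r(e)^{N_e(\g)} \prod_{i=0}^{m-1} \frac{1}{r(x_i) - \l}, $$
where $r(x_i) = \sum_{e' \text{ out of } \pi(x_i)} r(e')$ is an affine function of $r = (r(e))_{e \in \hat E}$ (the rates $r(e)$, $e \in E \setminus \hat E$, being fixed positive constants). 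For $(\l, r) \in \bbC \times \bbC^{\hat E}$ each $T_\g$ is a rational, hence holomorphic, function wherever $r(x_i) - \l \neq 0$.

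Next I would define
$$\O_* := \Bigl\{ (\l, r) \in \bbC \times \bbC^{\hat E} : \mathrm{Re}\, r(e) > 0 \; \forall e \in \hat E,\ -\mathrm{Re}\, \l > 3 M(r) + 1,\ |\mathrm{Im}\, r(e)| < \e\, \mathrm{Re}\, r(e) \; \forall e \in \hat E \Bigr\},$$
for a small $\e > 0$ to be chosen, with $M(r) := \max_{v \in \hat V} \sum_{e \text{ out of } \pi(v)} |r(e)|$. The first two conditions are the natural complexification of those defining $\O$ (on the real locus $M(r) = \max_v r(v)$), while the third is a mild tube condition. A direct check gives $\O = \O_* \cap (\bbR \times \bbR^{\hat E})$, and on $\O_*$ one has $|r(x_i) - \l| \geq \mathrm{Re}(r(x_i)) - \mathrm{Re}(\l) \geq -\mathrm{Re}(\l) > 0$, so each $T_\g$ is holomorphic on $\O_*$.

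The core estimate is absolute convergence on any compact $K \subset \O_*$. Writing
$$|T_\g(\l, r)| \leq \prod_{i=0}^{m-1} \frac{|r(x_i, x_{i+1})|}{|r(x_i)|} \cdot \prod_{i=0}^{m-1} \frac{|r(x_i)|}{|r(x_i) - \l|},$$
an easy induction on $m$ bounds the sum over $\g \in \cA_m$ of the first product by $\tilde M(r)^m$, where $\tilde M(r) := \max_{x \in \hat V} \sum_{y : (\pi(x), y) \in \cE} |r(\pi(x), y)|/|r(x)|$; the second product is bounded termwise by $\bigl(M(r)/(-\mathrm{Re}\, \l)\bigr)^m < (1/3)^m$. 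Taking the supremum over $K$,
$$\sum_{m \geq 1} \sum_{\g \in \cA_m} \sup_K |T_\g| \leq \sum_{m \geq 1} \Bigl( \sup_K \tilde M(r)/3 \Bigr)^m < \infty,$$
since $\tilde M \equiv 1$ on the real locus and hence $\sup_K \tilde M < 3$ for $\e$ small by continuity. By Weierstrass's theorem the series defines a holomorphic function $h_+^* : \O_* \to \bbC$ extending $h_+$, and the construction of $h_-^*$ is identical upon replacing $\cA_m$ by $\cA_m^*$. The main obstacle is that the transition-probability trick $\sum_y r(x, y)/r(x) = 1$ that makes the real case effortless does not survive complexification verbatim; the tube condition in the definition of $\O_*$ is introduced precisely to keep $\tilde M(r)$ close to $1$, after which the rest of the argument is a routine geometric tail estimate.
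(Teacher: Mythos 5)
Your proposal follows the same overall strategy as the paper: expand $h_\pm$ as the path series \eqref{giaco1}, observe that each term is rational (hence holomorphic) in $(\l,\underline{r})$, complexify the domain by a tube condition on the rates together with the condition $-\Re(\l)>3(\cdot)+1$, and then dominate the series uniformly so that Weierstrass' theorem applies. The difference lies in the majorant. The paper bounds each term, on a small box $U(\l^*,\underline r^*)$ where real parts vary by at most $\sqrt2$, by the corresponding term of the \emph{real} series for the random walk with rates $4\,\Re(r^*(e))$ at a negative real $\l$, and invokes the known finiteness of $\tilde f_+$ there; you instead factor each term as $\prod_i |r(x_i,x_{i+1})|/|r(x_i)|\cdot\prod_i|r(x_i)|/|r(x_i)-\l|$ and get a geometric series $\sum_m(\tilde M/3)^m$ via the row-sum normalization. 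Your route is self-contained (no appeal to the probabilistic finiteness of $\tilde f_\pm$) and in fact gives normal convergence on all of $\O_*$, not merely on compacts, since the tube condition yields the uniform bound $\tilde M(r)\le\sqrt{1+\e^2}$ on $\O_*$ (for $e\in\hat E$ one has $|r(e)|\le\sqrt{1+\e^2}\,\Re(r(e))$, the remaining rates are fixed reals, and $|r(x)|\ge\Re(r(x))$). One small imprecision: the displayed inequality $\sum_{m}\sum_{\g}\sup_K|T_\g|\le\sum_m(\sup_K\tilde M/3)^m$ conflates a sum of suprema with a supremum of sums, and ``$\sup_K\tilde M<3$ by continuity'' is not by itself justified for an arbitrary compact $K$; but both points are repaired at once by the pointwise bound $\sum_{\g\in\cA_m}|T_\g(\l,\underline r)|\le(\tilde M(\underline r)/3)^m$ together with the uniform estimate $\tilde M\le\sqrt{1+\e^2}$ just mentioned, which makes the tails $\sum_{m>N}$ go to zero uniformly on $\O_*$. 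With that repair the argument is complete and correct.
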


   \begin{proof}  In what follows, to simplify the notation, we write $\underline{ r} $ instead of $\bigl( r(e)\,:\, e \in \hat E\bigr)$.  In general $\underline{r}$ will be an element of $\bbC^{\hat E}$. 
 Given $v \in \hat V$ we define the map $ \phi_v: \bbC^{\hat E} \to \bbC$ as 
\begin{equation}
\phi_v (\bar r):= 
\begin{cases} 
\sum _{ (v,y) \in  \hat E} r( v,y)   & \text{ if } v \in \hat V\setminus \{ \underline{v}, \overline{v}\}\,,\\
\sum _{ (v,y) \in  \hat E} r( v,y) + \sum _{ (\underline{v}, y) \in E\setminus \hat E } r(\underline{v},y )+ \sum _{ (\overline{v}, y) \in E\setminus \hat E} r(\overline{v},y ) & \text{ if } v = \underline{v}, \overline{v}\,.
\end{cases}
\end{equation}  
    Recall that $\sum _{ (\underline{v}, y) \in E\setminus \hat E } r(\underline{v},y )$ and $ \sum _{ (\overline{v}, y) \in E\setminus \hat E} r(\overline{v},y ) $ are fixed positive constants since the values $r(e)$, $e \in E \setminus \hat E$, have been fixed once for all.
  Moreover note that  for $\underline{r} \in (0,+\infty)^{\hat E}$ it holds  $\phi_v (\underline{ r})= r(v)$, where $r(v)$ is the value defined in \eqref{pierre} for the random walk on $\cG$ induced by $\bigl( r(e) \,:\, e \in E \bigr)$. 

Given $\underline{r} \in \bbC ^{\hat E} $ we define $\Re (\underline{r}) \in \bbR^{\hat E}$ as the vector whose entries are the real part of the entries of $\underline{r}$, i.e. 
$$ \Re \left(\underline{r}\right)  (e) := \Re\left ( r(e) \right) \,, \qquad e \in \hat E \,.$$
    We define $\O_*\subset \bbC \times \bbC ^{\hat E} $ as the set of vectors $(\l, \underline{r})$ satisfying the following properties:
    \begin{itemize}
   \item[(i)]
   $  \Re ( \underline{ r}) \in (0,+\infty)^{\hat E}$,
   \item[(ii)] $|r(e) | \leq 2 \Re \left(r(e) \right) \; \forall e \in \hat E $,
   \item[(iii)] $- \Re (\l) >  3 \phi_v \left(    \Re \left(\underline{r}\right)  \right)+1 $ for all   $v \in \hat V$.
 \end{itemize}
 Note that 
 $ \O _* \cap  (\bbR \times 
\bbR^{\hat E})= \O$. 
 
    For each $\g =(x_0,x_1, \dots,x_m)\in \cA_m$, $m \geq 1$, we consider the holomorphic function (cf. \cite{GR}) $g_\g : 
   \O_*
   \to \bbC$  defined as 
   \begin{equation}
    g_\g ( \l, \underline{r}) :=
     \prod _{e \in \hat E} r( e) ^{N_e(\g)}  \prod _{i=0}^{m-1} \frac{1}{\phi_{x_i}(\underline{r})-\l} \,.
     \end{equation} 
     Recall that $N_e(\g)$ counts the number of times the edge $e$ appears along the path $\g$ and 
  that $\phi_{x_i}(\underline{r})$ is an affine function of  $\underline{r}$.

   Fix $(\l^*, \underline{r}^*) \in \O_*$. Consider the open  subset $U(\l^*, \underline{r}^*)
  \subset\O_*$ given by the vectors  
     $ (\l, \underline{r}) \in \O_*$ such that $-\Re(\l^*)/ \sqrt{2} < -\Re( \l) < -\sqrt{2} \Re(\l^*)$ 
  and $ \Re \left( \underline{r}^*\right)/ \sqrt{2}< \Re\left ( \underline{r}\right ) < \sqrt{2}\Re\left ( \underline{r}^*\right ) $. Trivially, $(\l^*, \underline{r}^*) \in U(\l^*, \underline{r}^*)
  $.
  
   If $\g \in \cA_m$ and $  (\l, \underline{r})  \in U(\l^*, \underline{r}^*)
$  we can bound
   \begin{equation}\label{pasolini}
   \begin{split}
    \left|  g_\g ( \l, \underline{r})\right| =\prod _{e \in \hat E} | r( e)| ^{N_e(\g)}  \prod _{i=0}^{m-1} \frac{1}{|\phi_{x_i}(\underline{r})-\l|} & \leq  2^m \prod _{e \in \hat E}  \Re\left( r( e)\right) ^{N_e(\g)}  \prod _{i=1}^{m} \frac{1}{\phi_{x_i}\left(\Re(\underline{r}) \right)-\Re(\l)}
  \\ &\leq  4^m \prod _{e \in \hat E}  \Re\left( r^*( e)\right) ^{N_e(\g)}  \prod _{i=0}^{m-1} \frac{1}{\phi_{x_i}\left(\Re(\underline{r}^*) \right)-\Re(\l^*)}\\
  & =  \prod _{e \in \hat E}  \Re\left(4  r^*( e)\right) ^{N_e(\g)}  \prod _{i=0}^{m-1} \frac{1}{\phi_{x_i}\left(\Re(\underline{r}^*) \right)-\Re(\l^*)}\\
  & \leq  \prod _{e \in \hat E}  \Re\left(4  r^*( e)\right) ^{N_e(\g)}  \prod _{i=0}^{m-1} \frac{1}{
  \phi_{x_i}\left(\Re(4 \underline{r}^*) \right)+1 
 }
   \,.
   \end{split}
      \end{equation}
 Indeed the first bound follows from Assumptions (i) and (ii) in the definition of $\O_*$, the second bound follows from the definition of $U(\l^*, \underline{r}^*)$,
 the last identity  follows from the fact that all edges of $\g$ are in $\hat E$, while the last bound  follows from 
        Assumption (iii) in the definition of $\O_*$ since we can bound 
   \begin{equation}\label{angioletto}
   \phi_{x_i}\left(\Re(\underline{r}^*) \right)- \Re(\l^*) \geq 4 \phi_{x_i}\left(\Re(\underline{r}^*) \right)+1 \geq  \phi_{x_i}\left(\Re(4 \underline{r}^*) \right)+1 \,.
   \end{equation}
 
 We are now interested to the infinite series of holomorphic functions 
\begin{equation}\label{fondo}
\sum    _{m=1}^\infty \sum _{\g = (x_0,x_1, \dots, x_m) \in \cA_m} g_\g (\l, \underline{r})\,.
\end{equation}
By \eqref{pasolini} for any $  (\l, \underline{r})  \in U(\l^*, \underline{r}^*)
$   we have
\begin{multline*}\sum    _{m=1}^\infty \sum _{\g = (x_0,x_1, \dots, x_m) \in \cA_m}\bigl| g_\g (\l, \underline{r}) \bigr|  \\ \leq   
\sum    _{m=1}^\infty \sum _{\g = (x_0,x_1, \dots, x_m) \in \cA_m}
\prod _{e \in \hat E}  \Re\left(4  r^*( e)\right) ^{N_e(\g)}  \prod _{i=0}^{m-1} \frac{1}{
  \phi_{x_i}\left(\Re(4 \underline{r}^*) \right)+1 
 }
   \,.
\end{multline*} 
Comparing with \eqref{giaco1}, the above r.h.s. equals 
the function $\tilde f^*_+\left ( \Re(\l^*) \right)$ with $\tilde f^*_+$  defined as the function $\tilde f_+$ referred to the random walk on $\cG$ induced by  weights 
$$ E \ni e \to
 \begin{cases}
4 r^*(e) & \text{ if } e \in \hat E \,,\\
 r(e) & \text{ if } e \in E \setminus \hat E \,.
\end{cases}
$$
Since $\Re(\l^*) <0$ the value $\tilde f^*_+\left ( \Re(\l^*) \right)$  is finite by definition of $\tilde f^*_+$.

  Since each compact subset of $\O_*$ can be covered by  the union of a finite family of sets of the form $U(\l^*, \underline{r}^*)$ we conclude that series \eqref{fondo} converges uniformly on compact subsets of $\O_*$.  By a classical theorem in complex analysis (see e.g.  \cite{Mal}[Ch. I, Prop.2] or \cite{GR}[Ch. I, Lemma 11]), we  conclude that the limiting function $h^*_+$  is holomorphic. Since 
   by \eqref{giaco1} the function $h_+$ in the main statement 
  equals  the series \eqref{fondo} on $ \O= \O_*\cap ( \bbR\times \bbR^{\hat E})$ we conclude that 
 $h_+$ is the restriction of $h^*_+$ on $\O$. By similar arguments,  $h_-$ is the restriction of $h^*_-$ on $\O$,  $h^*_-$ being an holomorphic function on $\O_*$ whose definition is analogous to $h^*_+$.
\end{proof}
\medskip
     
% We say that that family of positive rates $\{r(e)\}_{e\in E}$ is \emph{GC--good } if the associated $\l$--functions $\tilde{f}_+(\l)$ and $\tilde{f}_-(\l)$  are proportional: $\tilde{f}_+(\l )/ \tilde{f}_-(\l) = c( \{r(e)\}_{e\in E})$. 
Since  $h_->0$  on $\O$, there exists an open  subset $\O_{**}\subset \bbC \times \bbC^{\hat E}$ with $\O\subset\O_{**} \subset \O_*$ and such that $h^*_-\not = 0$ on $ \O_{**}$. At cost to restrict $\O_{**}$ we can assume that 
\begin{equation}\label{galline}\{ \l \in \bbC \,:\, (\l, \underline{r})\in \O_{**}\}
\end{equation} is connected for any fixed $\underline{r} \in (0,+\infty)^{\hat E}$.

\begin{Remark}\label{importante}
  By definition of $\O$, given  $\underline{r} \in (0,+\infty)^{\hat E}$, it holds $(\l, \underline{r})\in \O_{**}
$ if $\l $ is real and  $-\l >3 \max_{v \in \hat V} r(v) +1$. \end{Remark}

 The function 
$h_+^*/h_-^*$ is well defined and holomorphic on $\O_{**}$.  As a consequence, also the derivative   $h:=\partial_\l( h^*_+/h^*_-)$ is holomorphic (cf. \cite{C}[Sec. IV.2.2]).    
Note that, due to \eqref{salatini} and Theorem \ref{GC},  the function   $\frac{h_+^*}{h_-^*} (\l , \underline{r})$  restricted to $\O$
does not depend on $\l$  if $\underline{r} \in \cR$, the set defined in Proposition \ref{davide}. 
%In particular, $\cR\subset\bigl \{   \underline{r}\,:\, h(\l , \underline{r}) =0\bigr\}$. To prove Proposition \ref{davide} it is enough to show that the subs
In particular,   $h(\l, \underline{r})=0$ if $(\l, \underline{r} ) \in \O$ and $\underline{r}\in \cR$.
Consider the holomorphic function $\l \to h(\l , \underline{r})$, where $\underline{r}\in \cR$ is fixed. This function is defined on the set $\{\l \in \bbC\,:\, (\l , \underline{r}) \in \O_{**}\}$.  Since it has no isolated zeros  and  since \eqref{galline} is connected, 
we get that $h(\l, \underline{r}) = 0$  for any $\underline{r} \in \cR$ and any $\l \in \bbC\,:\, (\l , \underline{r}) \in \O_{**}$ (see \cite{C}).

Suppose now, by contradiction, that the set $\cR$ has positive Lebesgue measure (here and in what follows  we refer to the $|\hat E|$--dimensional Lebesgue measure).  Fix $\l<0$ and define $ \O_\l :=\left\{ \underline{r} \in \bbR^{\hat E}\,:\, (\l,\underline{r}) \in \O\right\}$ and  the function $h_\l : \O_\l \to \bbR$ as  $h_{\l}(\underline{r}) := h(\l, \underline{r})$.
 Note that $\O_\l$ is connected and that $h_\l$ is a real analytic function (locally  it admits a  convergent power series expansion, since restriction of an holomorphic function).  
 Since $\O_{\l} \subset \O_{\l'}$ if $\l' < \l$ and since $\cup _{\l <0} \O_\l= (0,+\infty)^{\hat E}$, we can find $\l_0<0$ such that $\O_\l \cap \cR$ has positive Lebesgue measure 
for $\l \leq \l_0$. From now on we assume $\l \leq \l_0$. This implies that the set $\{h_\l =0\}$ has positive Lebesgue measure. We claim that it must then be $h_\l \equiv 0$ on the entire connected set $\O_\l$ as a consequence of Weierstrass Preparation Theorem. Indeed, $h_\l$ is the restriction to $\O_\l$ of the holomorphic function $h(\l , \cdot)$ defined on an open subset of $\bbC^{\hat E}$ containing $\O_\l$. Then the thesis follows from this general fact:

\begin{Lemma} Fix $n \geq 1$ integer. 
Let $V$ be an open set of $\bbC^n$ such that $ U:= V\cap \bbR^n$ is connected. Let $f : V \to\bbC$ be an holomorphic function. Then either $f\equiv 0$ on $U$ or the set $\{z \in U\,: \, f(z)=0\}$ has zero  $n$--dimensional Lebesgue measure.
\end{Lemma}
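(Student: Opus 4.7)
The plan is to prove this by induction on $n$, reducing the dimension via Fubini and exploiting the identity theorem for holomorphic functions, both in one and in several complex variables. The base case $n=1$ is the classical fact that zeros of a non-identically-zero holomorphic function of one variable are isolated: if $\{f=0\}\cap U$ has positive one-dimensional Lebesgue measure then it has an accumulation point in $U$, and since $U$ is connected it lies in a single connected component $V'\subset V$, so the identity theorem forces $f\equiv 0$ on $V'$ and hence on $U$.

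For the inductive step, assume the statement holds in dimension $n-1$, and suppose $Z:=\{z\in U: f(z)=0\}$ has positive $n$-dimensional Lebesgue measure. Pick a Lebesgue density point $p\in U$ of $Z$ and choose a polydisk $D=D_1\times\cdots\times D_n\subset V$ centered at $p$ such that the real box $B:=D\cap\bbR^n=\prod_{i=1}^n(D_i\cap\bbR)$ satisfies $|Z\cap B|>0$. Set $B':=\prod_{i=1}^{n-1}(D_i\cap\bbR)$; this is the intersection with $\bbR^{n-1}$ of the connected open polydisk $D_1\times\cdots\times D_{n-1}\subset\bbC^{n-1}$, and it is itself a connected open box. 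By Fubini, the set $A:=\{x'\in B': |\{x_n\in D_n\cap\bbR : (x',x_n)\in Z\}|>0\}$ has positive $(n-1)$-dimensional Lebesgue measure. For each $x'\in A$, the holomorphic function $z_n\mapsto f(x',z_n)$ on $D_n$ vanishes on a positive-measure subset of the real interval $D_n\cap\bbR$, hence identically on $D_n$ by the base case. Consequently, for every fixed $z_n\in D_n$ the holomorphic function $x'\mapsto f(x',z_n)$ on $D_1\times\cdots\times D_{n-1}$ vanishes on the positive-measure set $A\subset B'$, so the inductive hypothesis (applied on the connected open set $B'$) yields its vanishing on all of $B'$.

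This gives $f\equiv 0$ on the real box $B$. Expanding $f$ as a power series at $p$ convergent on $D$, its restriction to $B$ is a real power series vanishing on an open real box, so all coefficients are zero and thus $f\equiv 0$ on $D$. Since $U$ is connected and contains $p$, it lies in the connected component $V'\subset V$ that contains $D$; applying the identity theorem for several complex variables on $V'$ gives $f\equiv 0$ on $V'$, and in particular on $U$, as required. The main point to keep under control is that at each inductive step the ambient real domain must remain connected so that the inductive hypothesis is applicable; this is precisely why we localize to a polydisk around a Lebesgue density point, since the real trace of a polydisk is automatically a connected box.
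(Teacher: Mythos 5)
Your proof is correct, but it follows a genuinely different route from the paper's. The paper localizes around each zero of $f$ and invokes the Weierstrass preparation theorem to conclude that, near a point where $f$ does not vanish identically, every real slice in the last coordinate meets the zero set in at most $k$ points; Fubini then gives local null measure, and a compact covering argument together with real analyticity yields the global dichotomy. You instead argue by induction on the dimension: Fubini on a real box around a density point of the zero set, the one--variable identity theorem on almost every vertical slice, and the inductive hypothesis on the horizontal slices, concluding that $f$ vanishes on a real box and hence (by uniqueness of Taylor coefficients and the several--variable identity theorem) on all of $U$. Your argument is more elementary in that it needs nothing beyond the $n=1$ identity theorem and power--series expansions; it also quietly avoids a point the paper glosses over, namely that Weierstrass preparation requires $f$ to be regular in the distinguished variable at the chosen center (in general this needs a linear change of coordinates). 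What the paper's approach buys in exchange is a sharper local statement (finiteness of the slice zero sets, not just null measure), which is not needed for the lemma itself. All the measure--theoretic steps in your argument (measurability of the set $A$ of horizontal coordinates with positive--measure vertical slice, existence of a density point in $Z$) are unproblematic since $Z$ is relatively closed in $U$.
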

\begin{proof} Note that $U$ is open. Below Lebesgue measure is considered as $n$--dimensional. 
It is enough to prove the following claim:

\begin{Claim}\label{stinco} For any $z \in U$ there is a neighborhood  $B_z$ of $z$ in $U$ such that 
the set $ \{y \in B_z\,:\, f(y) =0\}$  has nonempty  open part  or has zero Lebesgue measure.
\end{Claim} 

Let us first assume the above claim and show how to conclude. If for all $z \in U$ the set $ \{y \in B_z\,:\, f(y) =0\}$ has  zero   Lebesgue measure, then  each compact  subset $K \subset U$   can be covered by a finite family  $B_{z_1}$, $B_{z_2}$,...,$B_{z_r}$, thus implying that $\{y \in K\,:\, f(y)=0\}$ has zero  Lebesgue measure. This trivially leads to the fact that $\{z \in U\,: \, f(y)=0\}$ has zero  Lebesgue measure. On the other hand, if for some $z \in U$ the set  $ \{y \in B_z\,:\, f(z) =0\}$  has nonempty  open part, then the analytic function given by $f$ restricted to $U$ is zero on a ball inside $U$ and therefore is zero on all $U$ (see \cite{C}[Ch. IV.2.3]). 

At this point we only need to prove the above Claim \ref{stinco}.
 If $f(z)\not =0$ then for $B_z$ small the set $ \{y \in B_z\,:\, f(z) =0\}$ is empty and we are done. Suppose that $f(z)=0$ and $f$ not identically zero around $z$. 
By Weierstrass preparation theorem \cite{GR}[Ch. II.B], there exists $\e>0$ such that for all $y=(y_1, y_2, \dots, y_n) \in \bbC^n$ with $|y_i-z_i| < \e$ for all $i$ it holds 
\begin{equation}
\begin{split}
f(y)= h(y) \Big[ (y_n-z_n)^k & + a_{1}(y_1, \dots, y_{n-1})( y_n-z_n)^{k-1}\\ & + \dots
 + a_{k-1} (y_1, \dots, y_{n-1} ) (y_n-z_n)+ a_k(y_1, \dots, y_{n-1})  \Big] \,,
\end{split}
\end{equation}
where $y=(y_1, \dots, y_n)$, $k $ is a suitable integer, $a_1, \dots , a_k$ are holomorphic functions, and $h$ is a  never--zero  holomorphic function. It then follows that, fixed $(y_1, \dots, y_{n-1})$ with $|y_i -z_i |< \e$, the set $\{ y_n \in \bbC\,:\, |y_n -z_n|< \e\,,\; f(y_1, \dots, y_{n-1}, y_n)=0\}$ has cardinality at most $k$ (in particular, it has zero Lebesgue measure when intersected with $\bbR$).  The thesis follows by taking  $B_z:=\{ y \in \bbR^n\,:\, |y_i -z_i| <\e\}$ and applying Fubini theorem.
\end{proof}

Up to now, assuming that $\cR$ has positive Lebesgue measure,   we have proved that for each 
  $\underline{r}\in (0,+\infty)^{\hat E}$
 it holds  $h(\l, \underline{r})=0$ for $\l <0$ and   $|\l|$ large enough:  $\l<\l_0$ and $-\l >3 \max_{v \in \hat V} r(v) +1$ (see Remark \ref{importante}).
 In particular,  it is simple to  define  an increasing function $\varphi: (0,+\infty) \to (0,+\infty)$ such that $h(\l, \underline{r})=0$ for  all  $\underline{r}\in (0,+\infty)^{\hat E}$ and $\l < -\varphi \left( \max _{ e \in \hat E} r(e)\right)$. In particular we have proved the following fact:

   \begin{Fact}\label{tropicale}
 For each fixed  $\underline{r}\in (0,+\infty)^{\hat E}$, the ratio   $h_+(\l, \underline{r}) /h_-(\l , \underline{r})$ is constant for   $\l < -\varphi \left( \max _{ e \in \hat E} r(e)\right)$.
\end{Fact}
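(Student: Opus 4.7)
The plan is to read the claim essentially directly off the work that precedes it. Recall that $h$ was introduced as $h := \partial_\l (h_+^*/h_-^*)$ on the open set $\O_{**} \subset \bbC \times \bbC^{\hat E}$, where $h_-^*$ is nowhere zero (so the quotient and its derivative are both well defined and holomorphic). The paragraph immediately above Fact \ref{tropicale} produced, under the assumption that $\cR$ has positive Lebesgue measure, an increasing function $\varphi:(0,+\infty) \to (0,+\infty)$ such that $h(\l,\underline{r}) = 0$ for every $\underline{r} \in (0,+\infty)^{\hat E}$ and every real $\l < -\varphi(\max_{e \in \hat E} r(e))$. My task is to convert this vanishing of the $\l$--derivative into the constancy of the ratio itself.

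First I would verify that the range of $\l$ appearing in the statement lies in $\O_{**}$, so that $h_+^*/h_-^*$ is well defined there. By the construction of $\varphi$, the inequality $\l < -\varphi(\max_e r(e))$ implies in particular $-\l > 3\max_{v \in \hat V} r(v) + 1$, and by Remark \ref{importante} this places $(\l,\underline{r})$ in $\O \subset \O_{**}$. Next, fix $\underline{r} \in (0,+\infty)^{\hat E}$ and consider the real--valued function $F(\l) := (h_+^*/h_-^*)(\l,\underline{r})$ on the open interval $I := \bigl(-\infty, -\varphi(\max_{e\in \hat E} r(e))\bigr)$. Since $F$ is real analytic on $I$ (being the restriction of a holomorphic function on $\O_{**}$) and $F'(\l) = h(\l,\underline{r}) = 0$ for every $\l \in I$, and since $I$ is connected, $F$ is constant on $I$.

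Finally, Lemma \ref{como} identifies $h_\pm^*$ with $h_\pm$ on the real locus $\O$, and in particular on the subset of $\O$ parametrized by $\l \in I$ and the given $\underline{r}$. Consequently the ratio $h_+(\l,\underline{r})/h_-(\l,\underline{r})$ equals $F(\l)$ on $I$ and is therefore independent of $\l$ there, which is exactly the claim. There is no real obstacle in this step; the only point requiring care is ensuring that $I$ is contained in the domain of holomorphy of $h_+^*/h_-^*$, which is precisely what the definition of $\varphi$ and Remark \ref{importante} guarantee.
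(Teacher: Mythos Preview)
Your proof is correct and matches the paper's approach exactly: the paper simply states that the vanishing of $h=\partial_\l(h_+^*/h_-^*)$ on the relevant interval immediately yields Fact~\ref{tropicale}, and you have spelled out the elementary details (connectedness of the interval, membership in $\O_{**}$ via Remark~\ref{importante}, and identification of $h_\pm^*$ with $h_\pm$ on the real locus via Lemma~\ref{como}) that the paper leaves implicit.
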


 We now show that this is in contradiction with the assumption that the fundamental graph $G$ is not $(\underline{v}, \overline{v})$--minimal. 
Indeed, since $G$ is not $(\underline{v}, \overline{v})$--minimal,  
there exist at least two
 paths $\g^{(1)}=(z_0, z_1, \dots, z_M) $ and $\g^{(2)}=(z'_0, z'_1, \dots, z'_{M'})$ in $\cA_M$ and $\cA_{M'}$ respectively, 
 such that the points $z_i$ 
are all distinct, the points $z'_i$ are all distinct,  
  and for some  non--negative integers $\kappa_1$, $\kappa_2$ with $\kappa_1 + \kappa_2 + 2 \leq M \wedge M'$ it holds 	\begin{align*}
 	& z_i = z'_i  \qquad \quad  \; \forall \; 0\leq i \leq \kappa_1\,, \\
 	& z_{M-i} = z'_{M'-i}  \quad \forall \; 0\leq i \leq \kappa_2\,, \\
 	&\left \{ z_{ \kappa_1 +1} , \ldots , z_{M-\kappa_2-1} \right \} \cap \left\{z'_0, z'_1, \dots, z'_{M'}\right\} = \emptyset\,,\\
 	& \left\{z_0, z_1, \dots, z_M\right\} \cap 	
 	\{ z'_{ \kappa_1 +1} , \ldots , z'_{M'-\kappa_2-1} \} = \emptyset \, . 
 	\end{align*}
	In other words,  $\g^{(1)}$ and $\g^{(2)}$ are  linear chains,  they have in common the first $\k_1+1$ points and the last $\k_2+1$ points,  while they divide  in their interior part. 

Let $E_\star:= \Gamma _1 \cup \Gamma_2$, where 
$$ \Gamma _1:= \{ (z_i, z_{i+1}), (z_{i+1},z_i): 0\leq i <M\} \,, \qquad \G_2:= \{  (z'_j, z'_{j+1}), (z'_{j+1},z'_j): 0 \leq j <M'\}\,.
$$
Note that $E_\star\subset \hat E$. 
Introduce a new connected fundamental graph $G'=(V', E')$, where $E' = (E \setminus \hat E) \cup E_\star$ and $V'$ is given by the vertices appearing in the edges of $E'$.
As marked vertices we take again $\underline{v}, \overline{v}$.

Let $\bar r =\bigl( r(e)\,:\, e \in E_\star\bigr)\in (0,+\infty)^{E_\star}$ and for $k \geq 1$ let $\underline{r}^{(k)} \in (0,+\infty)^{\hat E}$ be defined as $\bar r$ on $E_\star$ and as $1/k$ on $\hat E \setminus E_\star$. 
Then
\begin{equation}\label{cremina}
 \lim_ {k \to \infty } h_\pm\left( \l, \underline{r}^{(k)} \right)= \tilde{ f}^*_\pm(\l)\,, \qquad \l\leq - \varphi \Big( \max _{e\in E_\star} r(e) \Big)\,,\end{equation}
where $\tilde{ f}^* _\pm$ refers to the rw  on the quasi 1d lattice induced by $(G', \underline{v}, \overline{v})$ and  by the weights  $r(e) $ with $e \in E \setminus \hat E$ (that have been fixed once and for all) and the weights $r(e)$ with $e \in E_ \star$. The limit \eqref{cremina} follows from the fact that, as $k \to \infty$, the probability to have a jump along an edge not in $E'$ goes to zero (use  the graphical construction for Markov chains)

Due to \eqref{cremina} and Fact \ref{tropicale} we have that 
the ratio $\tilde{f}^*_+(\l)/\tilde{f}^*_-(\l)$ is constant for $\l <0$  with   $|\l|$ large. 
 At this point, to have a contradiction it is enough to prove that for a suitable choice of $\overline{r}$ the above assertion is impossible. Let $\cA'_m$ be the analogous  of $\cA_m$ referred now to the graph $G'=(V',E')$. 
 Then for each path $\g$ in $\cA_m'$ for some $m$  going from $\underline{v}$ to $\overline{v}$, it holds either 
 	\begin{equation} \label{radio1}
 	\begin{split}
 	& N_{(z_i , z_{i+1})} (\g) - N_{(z_{i+1} , z_{i})} (\g) = 1 \quad \mbox{for all } i:0\leq i <m \, , \\
 	& N_{(x, y)} (\g) - N_{(y,x)} (\g) = 0 \quad \mbox{for all } 
	(x,y)\in \G_2 \setminus \G_1\,,
 	\end{split}
 	\end{equation}
 or 
 \begin{equation} \label{radio2}
 	\begin{split}
 	& N_{(z_i ', z_{i+1}')} (\g) - N_{(z_{i+1}' , z_{i}')} (\g) = 1 \quad \mbox{for all } i:0\leq i <m' \, , \\
 	& N_{(x, y)} (\g) - N_{(y,x)} (\g) = 0 \quad \mbox{for all } 
	(x,y)\in \G_1\setminus \G_2\,,
 	\end{split}
  	\end{equation}
So we can define the two disjoint sets 
	\begin{align*}
	& \mathcal{P}_1 := \{\;\mbox{paths in }  \cup _{m \geq 1} \cA_m' \mbox{ such that \eqref{radio1} holds} \} \\
	& \mathcal{P}_2 := \{\;\mbox{paths in } \cup _{m \geq 1} \cA_m'  \mbox{ such that \eqref{radio2} holds} \}  
	\end{align*}
%Note that $\mathcal{P}_1, \mathcal{P}_2$ partition the set of paths in $G^*=(V,E')$ going from $\underline{v}$ to $\bar{v}$. 
%Now to each $\g_i \in \mathcal{P}_i$ we can associate a path $\g_i^* $ via the bijection defined before \eqref{giaco2}, $i=1,2$.
Inverting the role of $\underline{v}, \overline{v}$ and considering paths from $\overline{v}$ to $\underline{v}$, one can define $(\cA_m')^*, \cP_1^*, \cP_2^*$ analogous of $\cA_m'$, $\cP_1$, $\cP_2$, respectively.   For example, $ \cP_1^*$ is given by the paths $\g$ in $\cup _{m\geq 1}(\cA_m')^*$ such that
\begin{equation} \label{radio100}
 	\begin{split}
 	& N_{(z_{i+1} , z_{i})} (\g) - N_{(z_{i} , z_{i+1})} (\g) = 1 \quad \mbox{for all } i:0  \leq i <m \, , \\
 	& N_{(x, y)} (\g) - N_{(y,x)} (\g) = 0 \quad \mbox{for all } 
	(x,y)\in \G_2 \setminus \G_1\,,
 	\end{split}
 	\end{equation}

Given a path $\g = (x_0, x_1, \dots, x_m)$  we define  the reversed path $\g^*= (x_m,x_1, \dots, x_0)$. Note that if $\g \in \cP_i$ then $\g^* \in \cP^*_i$.
Using formulas similar to \eqref{giaco1} referred now to $G'$  we have 
%%%%%%%%%%%%%%%%%%%%
\begin{equation}\label{fratello}
\frac{ \tilde{f}_+(\l )}{\tilde{f}_-(\l )}  = \frac{ \tilde{f}_{1,+}(\l ) +\tilde{f}_{2,+}(\l )  }{\tilde{f}_{1,-}(\l )
+\tilde{f}_{2,-}(\l)}
\end{equation}
where, for $s=1,2$,
\begin{align*}
&  \tilde{f}_{s,+}(\l )=  \ds \sum_{m=1}^\infty \sum _{\g \in \cP _s \cap \cA'_m}
    \prod _{e \in E_\star} r(e) ^{N_e(\g)}  \prod _{i=0}^{m-1} \frac{1}{r(x_i)-\l}\,,\\
&   \tilde{f}_{s,-}(\l )=  \ds \sum_{m=1}^\infty \sum _{\g \in \cP ^*_s \cap (\cA'_m)^*}
    \prod _{e \in E_\star} r(e) ^{N_e(\g)}  \prod _{i=0}^{m-1} \frac{1}{r(x_i)-\l}\,.
    \end{align*}
    Note that $r(x)$ is now referred to the fundamental graph $G'$ with weights $r(e) $ with $e \in E \setminus \hat E$ (that have been fixed once and for all) and the weights $r(e)$ with $e \in E_ \star$. Simply forget $G$.
    
  Due to \eqref{radio1} and \eqref{radio100} and similar formulas, for $s=1,2$ we get    \begin{equation}\label{sorella}
     \tilde{f}_{s,+}(\l )=\tilde{f}_{s,-}(\l) \cdot \D_s \,, \qquad \D_1:=\prod _{i=0}^{M-1} \frac{r(z_i, z_{i+1}) }{r(z_{i+1},z_i) } \,, \qquad \D_2:=\prod _{i=0}^{M'-1} \frac{r(z'_i, z'_{i+1}) }{r(z'_{i+1},z'_i) } \,.
    \end{equation} 
   Combining \eqref{fratello} and \eqref{sorella} we have
   \begin{equation}\label{mammina}
   \frac{  \tilde{f}_+(\l )}{\tilde{f}_-(\l )}  =
    \frac{\tilde{f}_{1,+}(\l ) + \tilde{f}_{2,+}(\l )  }{\D_1^{-1} \tilde{f}_{1,+}(\l )
+\D_2^{-1} \tilde{f}_{2,+}(\l)}\,.
   \end{equation} 
   %Dividing by $\tilde{f}_{1,+}(\l )$ we conclude that the l.h.s. of \eqref{mammina} does not depend on $\l$ for $\l<0$ small if and only if the same holds for the ratio $\tilde{f}_{2,+}(\l ) /\tilde{f}_{1,+}(\l ) $.
If $\D_1 = \D_2$ then the ratio $\tilde{f}_{+}(\l ) /\tilde{f}_{-}(\l )$ is independent of $\l$, but this happens for a set of rates of Lebesgue measure $0$. Assume now $\D_1 \neq \D_2$. Then dividing by $\tilde{f}_{1,+}(\l )$ we conclude that the l.h.s. of \eqref{mammina} does not depend on $\l$ for $\l<0$    with  $|\l|$ large  if and only if the same holds for the ratio $\tilde{f}_{2,+}(\l ) /\tilde{f}_{1,+}(\l ) $.

We point out that each path in $\cP_1 $ belongs to $\cA_m'$ for some $m \geq M$. The only path in $\cP_1$ belonging to $ \cA_M'$ is $\g^{(1)}$, while there is no path in $\cP_1$  
belonging to $ \cA_{M+1}'$.
 Moreover, $|\cA_m'|\leq 3^m$ since,   when constructing  a path $\g \in \cA_m'$ vertex by vertex, at each step we can choose only among $2$ or $3$ 
 neighbors.  Hence 
\begin{equation}\label{lunare}
\Big|\ds \sum_{m=M+2}^\infty \sum _{\g \in \cP _1 \cap \cA'_m}
    \prod _{e \in E_\star} r(e) ^{N_e(\g)}  \prod _{i=0}^{m-1} \frac{1}{r(x_i)-\l} 
\Big| \leq \ds \sum_{m=M+2}^\infty  \frac{c^m }{|\l|^m }= \frac{(c/|\l| )^{M+2}}{1-c/|\l|}\,,\end{equation}
where $ c:= 3 \max \{ r(e) : e \in E_*\}$.
In particular, separating the contribution of $\g^{(1)}$ from the other paths in the definition of 
$\tilde{f}_{1,+}(\l )$, we have that 
  \[\tilde{f}_{1,+}(\l ) = 
      c_1 \prod _{i=0}^{M-1} \frac{1}{r(z_i)-\l}+ O\left( 
      \frac{1}{|\l|^{M+2} }\right) \,, \qquad  c_1:= \prod _{i=0}^{M-1} r( z_i,z_{i+1}) \] 
     Above $ O\left( 
      \frac{1}{|\l|^{M+2} }\right)$ means that the term in consideration is bounded in modulus by $C/|\l|^{M+2} $.
      Note that 
 for $\l<0$   with  $|\l|$ large  we have
       $$ \frac{1}{r(z_i)-\l} = \frac{1}{|\l|} \frac{1}{1+r(z_i)/|\l|} = \frac{1}{|\l|}
\left( 1- \frac{r(z_i)}{|\l|}+ \cE_i(\l) \right)       
 $$
  where $\lim _{\l \to -\infty}    |\l|   \cE_i(\l)=0$. The same arguments hold for $\tilde{f}_{2,+}$ where
  $c_2:= \prod _{i=0}^{M'-1} r( z'_i,z'_{i+1})$.   In conclusion we have 
  \begin{align*}
  &  \tilde{f}_{1,+}(\l ) = 
      \frac{c_1}{|\l|^M}-\frac{ c_1 }{|\l|^{M+1} } \sum _{i=0}^M r(z_i)+o\left( 
      \frac{1}{|\l|^{M+1} }\right) \,,\\
    &  \tilde{f}_{2,+}(\l ) = 
      \frac{c_2}{|\l|^{M'}}-\frac{ c_2 }{|\l|^{M'+1} } \sum _{i=0}^{M'} r(z'_i)+o\left( 
      \frac{1}{|\l|^{{M'}+1} }\right) \,.
      \end{align*}   
Since $\tilde{f}_{1,+}(\l ) ,\tilde{f}_{2,+}(\l ) $ are proportional for $\l<0$  with  $|\l|$ large,  it must be  
%\begin{equation}\label{alba}
$M=M'$ and $\sum _{i=0}^M r(z_i)=  \sum _{i=0}^{M} r(z'_i)$.
 These identities  cannot be true in general. If $M \not =M'$ trivially we have a contradiction. Otherwise  take $r(e)=1$ for all $e \in \G_1$ and $r(e)=a>0$ for all $a \in \G_2 \setminus \G_1$. If $a$ is large then the identity $\sum _{i=0}^M r(z_i)=  \sum _{i=0}^{M} r(z'_i)$  fails.
 %  Note that $r(z_0)= r(\underline{v})$, $r(z_i)= 2$ for $i=1,\dots, \k_1-1$, $r(z_{\k_1})= 2+a$,
  % $ r( z_{ M-\k_2})= 2+a$, $r( z_i)= 2$ for $i= M-\k_2+1, \dots, M-1$.
 %  Setting $j= M- \k_1-\k_2-2$ we have that $\g^{(1)}$ has $j$ points where $r(\cdot)$ equals 2, and   $\g^{(2)}$ has $j$ points where $r(\cdot)$ equals 2a. Hence
% Then  $c_1 =1$, $c_2= a^{j+1}$ and 
 % \begin{align*}
 % \sum_{i=1}^{M-1} r(z_i) = r(\underline{v})+ 2 [\k_1+\k_2+a] +2j \,,    \\
 % \sum _{i=0}^{M-1} r( z_i')=r(\underline{v})+ 2 [\k_1+\k_2+a] +2a j\,.
  %   \end{align*}
   %  Taking $a$ large enough  we get that      $\sum_{i=1}^{M-1} r(z_i)  \not = \sum_{i=1}^{M-1} r(z'_i)$.
\qed
%%%%%%%%%%%%%%%%%%%%%%%%%%%%%%%%%%%%%%%%%%%%

%%%%%%%%%%%%%%%%%%%%%%%%%%%%%%%%%%%%%%%%%%%%

%%%%%%%%%%%%%%%%%%%%%%%%%%%%%%%%%%%%%%%%%%%%%%%%%

\appendix

\section{Proposition \ref{cuore} implies \eqref{UB}} \label{reduction}
For completeness,  following similar arguments as in \cite{DGZ}, 
  we explain how  one can deduce  from Proposition \ref{cuore} the upper bound \eqref{UB} for all $\th \in \bbR$, assuming $v>0$.
 Recall that    $S_t :=\inf  \{ s\geq t : Z_s \leq 0 \}$,   and observe that for all $u>0$ it holds
	\begin{equation} \label{reverse}
	\bbP \Big( \inf_{s \geq t} Z_s \leq ut \Big) \leq q^{-ut} \bbP (S_t < \infty )\,, \qquad q:= \bbP( w_1=-1) \, . 
	\end{equation}
	 To prove the above bound observe that  one possible way of realizing the event $\big\{ \inf_{s \geq t} Z_s \leq 0\big\} $ is the following.  If $Z_t > \lfloor ut \rfloor$ then  the process  hits $\lfloor ut \rfloor$ after time $t$ and then performs $\lfloor ut \rfloor $ consecutive steps to the left. If  $Z_t \leq  \lfloor ut \rfloor$ then after time $t$  the process  performs 
	 $\lfloor ut \rfloor $ consecutive steps to the left. In particular we get 
	 	\[ \bbP (S_t < \infty ) = \bbP \bigg( \inf_{s \geq t} Z_s \leq 0 \bigg) \geq 
	\bbP \bigg( \inf_{s \geq t} Z_s \leq \lfloor ut \rfloor \bigg) q^{\lfloor ut \rfloor } 
	\geq \bbP \bigg( \inf_{s \geq t} Z_s \leq ut \bigg) q^{ut} \, . \]
%In the second inequality above we have used that, since $Z$ is an integer valued process, 
%$\big\{  \inf_{s \geq t} Z_s \leq ut \big\} = \big\{  \inf_{s \geq t} Z_s \leq \lfloor ut \rfloor  \big\} $. \\	
From \eqref{reverse} and Proposition \ref{cuore} we readily get \eqref{UB} for $\th = 0$:
	\[ \limsup_{t \to \infty} \frac{1}{t} \log \bbP \Big( \frac{Z_t}{t} \in ( -\e , \e ) \Big) 
	\leq
	\limsup_{t \to \infty} \frac{1}{t} \log \bbP \Big( \inf_{s \geq t} Z_s \leq \e t \Big) 
	\leq -\e -I(0) \stackrel{\e \to 0}{\longrightarrow} -I(0) \, . \]

Fix, now, any $\th >0$ and take $\e$ small enough so that $u := \th - \e >0$ and fix $u'\in (0,u)$. Let $m$ be any positive integer.  Then we have  for $t$ large (as we assume)
	\[ \begin{split}
	\bbP \left( \frac{Z_t}{t} \in ( \th -\e ,\th + \e ) \right) & 
	\leq \bbP ( Z_t \in [ut , ut + 2\e t ] ) 
	= \bbP (T_{   \lfloor ut \rfloor  } \leq t , \, Z_t \in [ut , ut + 2\e t ] ) \\
	& \leq \bbP \bigg( \frac{T_{ \lfloor ut \rfloor }}{ \lfloor ut \rfloor} \leq \frac{1}{u'} , \, \inf_{s \geq t} Z_s \leq ut + 2\e t \bigg) \\
	& \leq \sum_{k=1}^m \bbP \bigg( \frac{T_{  \lfloor ut \rfloor }}{ \lfloor ut \rfloor} \in \bigg[ \frac{(k-1)}{u'm} , \frac{k}{u'm} \bigg] \bigg) 
	\bbP \left( \inf_{s \geq t - \frac{kt}{m}} Z_s \leq 2\e t \right) \\
	& \leq q^{-2\e t } \sum_{k=1}^m \bbP \bigg( \frac{T_{ \lfloor ut \rfloor}}{ \lfloor ut \rfloor} \in \bigg[ \frac{(k-1)}{u'm} , \frac{k}{u'm} \bigg] \bigg) 
	\bbP \big( S_{t - \frac{kt}{m}} < \infty \big) \,.
	\end{split}
	\]
We point out that the third inequality above follows from the strong Markov property applied at time $T_{   \lfloor ut \rfloor  }$ and the fact that the probability $ \bbP \left( \inf_{s \geq t-a} Z_s \leq 2\e t \right) $ is
increasing in $a$. The last inequality follows from \eqref{reverse}.

Reasoning as in \eqref{hot_wheels_1} and using  Proposition \ref{cuore},  we get for $1\leq k < m$	\[ \begin{split}
\bbP \bigg( \frac{T_{ \lfloor ut \rfloor}}{ \lfloor ut \rfloor} \in \bigg[ \frac{(k-1)}{u'm} , \frac{k}{u'm} \bigg] \bigg) 
	&
	\bbP \big( S_{t - \frac{kt}{m}} < \infty \big)  
	\leq e^{ t \e + tu w_+ \big( \frac{1}{u'} , \frac{1}{mu'} \big) } e^{-t \big[ \frac{k}{m} I \big( \frac{m u'}{k} \big) + \big( 1-\frac{k}{m} \big) I(0) \big] }\\
	& \leq e^{ t \e + tu w_+ \big( \frac{1}{u'} , \frac{1}{mu'} \big) } e^{-t I(u') } \, , 
	\end{split} \]
where $t$ is taken large enough and $w_+$ is defined as in \eqref{oscillazioni}. 
Note that the last inequality follows from the convexity of $I$. 
When $k=m$, on the other hand, $ \bbP \big( S_{t - \frac{kt}{m}} < \infty \big)  = \bbP \big( S_0 < \infty \big)  = 1$ and, as in \eqref{hot_wheels_1}, for $t$ large we have 
	\[ \bbP \bigg( \frac{T_{ \lfloor ut \rfloor}}{ \lfloor ut \rfloor} \in \bigg[ \frac{(m-1)}{u'm} , \frac{1}{u'} \bigg] \bigg) 
	\leq e^{t\e + tu w_+ \big( \frac{1}{u'} , \frac{1}{mu'} \big) } e^{-t I(u') } \, . \]
Putting all together, we have shown that for any $\e$ small and $t$ large enough it holds
	\[ \bbP \bigg( \frac{Z_t}{t} \in ( \th -\e ,\th + \e ) \bigg) \leq m \cdot q^{-2\e t } \cdot   e^{  t \e + tu w_+ \big( \frac{1}{u'} , \frac{1}{mu'} \big) } e^{-t I(u') } \]
with $u = \th - \e$, and therefore
	\[ \limsup_{t \to \infty} \frac{1}{t} \log \bbP \bigg( \frac{Z_t}{t} \in ( \th -\e , \th + \e ) \bigg)
	\leq
	- 2 \e \log q + \e +  u w_+ \big( \frac{1}{u'} , \frac{1}{mu'} \big) - I(u') \, . \]
Letting, now, $m\to \infty$ and then $\e \to 0 $ (so that also $u \to \th $) and taking $u' \to \th$ gives \eqref{UB}. 

The proof of the same bound for $\th < 0$ follows by similar arguments.

\medskip

\medskip

\noindent {\bf Acknowledgements.} The authors thank  K. Duffy,
D. Fiorenza, N. Gantert, M. Manetti, C. Macci, M. Mariani, J.R.  Norris, G.L. Torrisi for useful discussions.
  V. Silvestri thanks the Department of Mathematics in University ``La Sapienza''
for the hospitality and  acknowledges the support of the UK Engineering and Physical Sciences Research Council (EPSRC) grant EP/H023348/1 for the University of Cambridge Centre for Doctoral Training, the Cambridge Centre for Analysis.

\medskip

\medskip

\end{document}